\newtheorem{prop}{Proposition}[section]
\newtheorem{lemma}[prop]{Lemma}
\newtheorem{cor}[prop]{Corollary}
\newtheorem{theorem}[prop]{Theorem}
\newtheorem*{lemma*}{Lemma}
\newtheorem*{theorem*}{Theorem}
\newcommand{\eps}{\epsilon}
\newcommand{\RR}{\mathbb{R}}
\newcommand{\pp}{\mathbin{\!/\mkern-5mu/\!}}
\newcommand{\CP}{\mathbf{CP}}
\newcommand{\SSS}{\mathbf{S}}
\title{A sharp square function estimate for the cone in $\RR^3$}
\author{Larry Guth, Hong Wang, and Ruixiang Zhang}
\begin{document}

\begin {abstract} We prove a sharp square function estimate for the cone in $\mathbb{R}^3$ and {\color{black} consequently} the local smoothing conjecture for the wave equation in $2+1$ dimensions.
\end {abstract}

\maketitle

\section{Introduction}

\subsection{Main Results}

This paper concerns the restriction theory of the cone in $\RR^3$.  Let $\Gamma$ be the truncated light cone $\Gamma= \{\xi_1^2+\xi_2^2= \xi_3^2, 1/2\leq \xi_3\leq 1\}$, and let $N_{R^{-1}}(\Gamma)$ denote its $R^{-1}$-neighborhood.  Cover $N_{R^{-1}}(\Gamma)$ by {\color{black} finitely overlapping} sectors $\theta$ of angular width $R^{-1/2}$, where each sector is a rectangular box of dimensions about $R^{-1} \times R^{-1/2} \times 1$.  {\color{black}If $ \hat f$ has support on $N_{R^{-1}} (\Gamma)$, we consider a set of functions $\{f_{\theta}\}$ such that: (a) $\hat f_\theta$ is supported on $\theta$ and (b) $f = \sum_{\theta} f_{\theta}$. For example{\color{black}\footnote{\color{black}We remark that what we prove about $\{f_{\theta}\}$ in this paper is uniform as long as (a) and (b) are satisfied, i.e. does not depend on the particular choice of $\{f_{\theta}\}$. 
}} here is a natural way to choose $\{f_{\theta}\}$: let} $\psi_\theta$ be a smooth partition of unity {\color{black}subordinate} to {\color{black}the} covering {\color{black}$\{\theta\}$}, and define $f_\theta$ by $\hat f_\theta = \hat f \psi_\theta$.  We prove the following sharp square function estimate for this decomposition.

\begin{theorem}\label{sq fun} (Square function estimate)
	For any $\eps > 0$, $R \ge 1$ {\color{black}and} any function $f$ whose Fourier transform is supported on  $N_{R^{-1}}(\Gamma)$, we have
	$$\|f\|_{L^4({\color{black} \mathbb{R}^3})} \leq C_{\epsilon} R^{\epsilon} \left\|(\sum_\theta |f_{\theta}|^2)^{1/2} \right\|_{L^4{\color{black} (\mathbb{R}^3)}}.$$
\end{theorem}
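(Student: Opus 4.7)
The plan is to prove Theorem \ref{sq fun} by induction on $R$, using a \emph{high--low frequency decomposition} in the spirit of recent high--low arguments in restriction theory. The base case $R \leq 1$ is trivial, since there are only $O(1)$ sectors. For the inductive step, I work with the equivalent formulation $\||f|^2\|_{L^2}^2 \leq C_\epsilon R^\epsilon \|g\|_{L^2}^2$, where $g := \sum_\theta |f_\theta|^2$.

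Expanding $|f|^2 = g + \sum_{\theta_1 \neq \theta_2} f_{\theta_1} \bar f_{\theta_2}$, each cross term $f_{\theta_1}\bar f_{\theta_2}$ has Fourier support in the translated box $\theta_1 - \theta_2$, centered at $p_{\theta_1} - p_{\theta_2}$. The diagonal set $\bigcup_\theta (\theta - \theta)$ concentrates in a thickened neighborhood of the cone generators through the origin, while for $\theta_1 \neq \theta_2$ the difference $p_{\theta_1} - p_{\theta_2}$ lies essentially outside this set. I introduce a smooth Fourier cutoff $P_\ell$ adapted to this diagonal neighborhood, so that $P_\ell(|f|^2) \approx g$ up to rapidly decreasing errors, and set $P_h := I - P_\ell$. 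Then
$$\int |f|^4 = \int |f|^2 \, P_\ell(|f|^2) + \int |f|^2 \, P_h(|f|^2).$$
Cauchy--Schwarz bounds the first term by $\|f\|_{L^4}^2 \|g\|_{L^2}$; if this dominates, the desired estimate follows without any $R^\epsilon$ loss. Otherwise, it suffices to prove $\|P_h(|f|^2)\|_{L^2} \leq C_\epsilon R^\epsilon \|g\|_{L^2}$.

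For this I would decompose the high-frequency region dyadically according to the angular separation scale $s$ of the contributing pairs, with $R^{-1/2} \lesssim s \lesssim 1$. At scale $s$, I group the fine sectors $\theta$ into super-sectors $\tau$ of angular width $s$ and set $f_\tau := \sum_{\theta \subset \tau} f_\theta$. On balls of spatial radius $s^{-1}$ dual to $s$, each $f_\tau$ locally resembles a function adapted to the cone at the smaller radius $R' := s^{-2} < R$. A Lorentz rescaling along the generator of $\tau$ converts this local picture into an instance of the square function estimate at radius $R'$, so the inductive hypothesis supplies a gain of $(R')^\epsilon = s^{-2\epsilon}$ at each scale. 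Reassembling the contributions dyadically introduces at most a $\log R$ factor, producing the $R^{O(\epsilon)}$ bound required to close the induction.

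The principal obstacle is the inductive step for the high term: one must verify that, after Lorentz rescaling and a careful wave-packet localization, the scale-$s$ contribution \emph{genuinely} reduces to the statement of Theorem \ref{sq fun} at radius $s^{-2}$, with every geometric loss --- tube overlap counts on intermediate balls, Schwartz tails from the smooth cutoffs, and the Jacobian effects of the Lorentz change of variables on the $L^4$ norm --- combining without spoiling the induction. The distinctly three-dimensional anisotropy of $\Gamma \subset \RR^3$, with two tangent directions of different scaling behavior, makes this accounting substantially subtler than in analogous parabolic settings, and is where the genuinely new ingredients of the proof must reside.
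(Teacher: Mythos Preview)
Your high--low instinct is close to the paper's --- the key Kakeya-type Lemma~\ref{incidenceintro} is precisely a high--low decomposition on the Fourier side --- but two things are off, and the second is a genuine gap rather than a technicality.

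First, a scale miscalculation. Lorentz rescaling on a sector $\tau$ of aperture $s$ sends $N_{R^{-1}}(\Gamma)\cap\tau$ to $N_{s^{-2}R^{-1}}(\Gamma)$, so the rescaled problem has parameter $s^2 R$, not $s^{-2}$; the subsectors $\theta\subset\tau$ of aperture $R^{-1/2}$ become sectors of aperture $s^{-1}R^{-1/2}=(s^2R)^{-1/2}$, consistently. Your $R'=s^{-2}$ would make the $\tau$'s themselves the finest sectors, and then the inductive hypothesis says nothing about the $\theta$-scale square function $g$.

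Second, and this is the real obstruction: Theorem~\ref{sq fun} is not strong enough to serve as its own inductive hypothesis in this scheme. After rescaling, the hypothesis at scale $s^2R$ yields $\|f_\tau\|_{L^4}^4 \lesssim (s^2R)^\epsilon \int g_\tau^2$ with $g_\tau=\sum_{\theta\subset\tau}|f_\theta|^2$. But the scale-$s$ piece of $P_h(|f|^2)$ is built from cross terms $f_{\theta_1}\bar f_{\theta_2}$ with angular separation $\sim s$, which straddle \emph{adjacent} $\tau$'s; it is not $\sum_\tau |f_\tau|^2$, and $\|P_{h,s}(|f|^2)\|_{L^2}^2$ is not controlled by $\sum_\tau\|f_\tau\|_{L^4}^4$ without losses the induction cannot absorb. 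Your final paragraph essentially flags this, but it is the whole problem.

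The paper's resolution is to abandon direct induction on Theorem~\ref{sq fun} and instead prove the strictly stronger Theorem~\ref{main}, whose right-hand side
\[
\sum_{R^{-1/2}\le s\le 1}\ \sum_{d(\tau)=s}\ \sum_{U\pp U_{\tau,R}} |U|^{-1}\|S_U f\|_{L^2}^4
\]
is exactly what the high--low argument produces when applied to $g=\sum_\theta|f_\theta|^2$ rather than to $|f|^2$ (Lemma~\ref{incidenceintro}), and is exactly what transforms cleanly under Lorentz rescaling (Lemma~\ref{general}). A separate parabola-approximation step (Lemma~\ref{small ball}) bridges the unit scale. Theorem~\ref{sq fun} then follows from Theorem~\ref{main} by a single Cauchy--Schwarz. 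The new idea is not the high--low decomposition per se but the identification of this intermediate multi-scale quantity on which the induction can actually close.
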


This type of square function estimate was considered by Mockenhaupt \cite{M} who proved that it implies the cone multiplier conjecture in $\RR^3$, and by Mockenhaupt--Seeger--Sogge \cite{MSS} {\color{black} (in a slightly different form)} who {\color{black} essentially} showed that it implies the local smoothing conjecture for the wave equation in 2+1 dimensions.  Here we recall the local smoothing conjecture, and we refer to \cite{M} and \cite{LV} for more information about the cone multiplier conjecture. The local smoothing conjecture was formulated by Sogge in \cite{S}. If $u$ is a solution to the wave equation on $\RR^n$, a local smoothing inequality bounds $\| u \|_{L^p(\RR^n \times [1, 2])}$ in terms of the Sobolev norms of the initial data.  In particular, the local smoothing conjecture in 2 + 1 dimensions is the following estimate.

\begin{theorem} \label{locsmooth} (Local smoothing in 2+1 dimensions) Suppose that $u(x,t)$ is a solution of the wave equation in $2+1$ dimensions, with initial data $u(x,0) = u_0(x)$ and $\partial_tu(x,0) = u_1(x)$.  Then for any $p \ge 4$, and any $\alpha > \frac{1}{2} - \frac{2}{p}$,
	
\begin{equation} \label{locsmootheqn} \| u \|_{L^p (\RR^2 \times [1,2])} \le C_\alpha \left( \| u_0 \|_{p, \alpha} + \| u_1 \|_{p, -1 + \alpha} \right).\end{equation}

\end{theorem}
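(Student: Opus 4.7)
The plan is to deduce Theorem~\ref{locsmooth} from Theorem~\ref{sq fun} via the Mockenhaupt--Seeger--Sogge-style reduction referred to in the introduction. First, I would perform the standard reductions: splitting $u$ into half-wave propagators $e^{\pm it\sqrt{-\Delta}}$ applied to suitable linear combinations of $u_0$ and $(-\Delta)^{-1/2} u_1$, and then applying a Littlewood-Paley decomposition in the spatial frequency. Since $p \geq 4 > 2$, the vector-valued Littlewood-Paley inequality reduces matters to the frequency-localized estimate
\[ \| e^{it\sqrt{-\Delta}} f \|_{L^p(\RR^2 \times [1,2])} \leq C_\epsilon R^{1/2 - 2/p + \epsilon} \|f\|_{L^p(\RR^2)} \]
for $\hat f$ supported on $\{|\xi| \sim R\}$ with $R = 2^k$, which sums geometrically in $k$ under the hypothesis $\alpha > 1/2 - 2/p$.

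Next, I would reduce to the endpoint $p=4$ by interpolation. The easy endpoint $\|e^{it\sqrt{-\Delta}} f\|_{L^\infty(\RR^2)} \leq C R^{1/2} \|f\|_{L^\infty(\RR^2)}$ for $t$ of order $1$ follows from Young's inequality applied to the frequency-localized wave kernel, whose $L^1$-norm is $\lesssim R^{1/2}$ by stationary phase (and this is sharp, as seen by focusing-type examples). Riesz--Thorin interpolation between this and the $p=4$ endpoint $\|e^{it\sqrt{-\Delta}} f\|_{L^4(\RR^2\times[1,2])} \leq C_\epsilon R^\epsilon \|f\|_{L^4(\RR^2)}$ recovers the exponent $1/2 - 2/p + \epsilon'$ for every $p \in [4,\infty]$, reducing everything to this $p=4$ endpoint.

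For the $p=4$ endpoint I would bring in Theorem~\ref{sq fun}. Let $\chi$ be a smooth cutoff on $[1/2, 3]$ equal to $1$ on $[1,2]$, and set $F(x,t) := \chi(t) e^{it\sqrt{-\Delta}} f(x)$; then $\hat F(\xi, \tau) = \hat\chi(\tau - |\xi|) \hat f(\xi)$ is supported in an $O(1)$-neighborhood of the dilated cone $\{(\xi,|\xi|): R/2 \leq |\xi| \leq R\}$. The parabolic rescaling $(x,t) \mapsto (Rx, Rt)$ pulls this support into $N_{R^{-1}}(\Gamma)$, and the $R^{-1/2}$-angular sectors of $\Gamma$ correspond precisely to $R^{-1/2}$-angular sectors of the spatial frequency; all relevant $L^4$ norms rescale consistently. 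Theorem~\ref{sq fun} then yields
\[ \| F \|_{L^4(\RR^3)} \leq C_\epsilon R^\epsilon \left\| \Bigl( \sum_\theta |F_\theta|^2 \Bigr)^{\!1/2} \right\|_{L^4(\RR^3)}, \]
where $F_\theta(x,t) = \chi(t) e^{it\sqrt{-\Delta}} f_\theta(x)$ and $\{f_\theta\}$ is the angular decomposition of $f$ in spatial frequency.

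The main obstacle is then to control this square function by $\|f\|_{L^4(\RR^2)}$. The key geometric input is that, by stationary phase, each angular piece obeys $F_\theta(x,t) \approx f_\theta(x + t\omega_\theta)$ on the time slab, where $\omega_\theta \in S^1$ is the central direction of the sector $\theta$: $F_\theta$ is essentially a translation of $f_\theta$ travelling in a direction determined by its angular localization. Expanding
\[ \left\| \Bigl( \sum_\theta |F_\theta|^2 \Bigr)^{\!1/2} \right\|_{L^4(\RR^3)}^{\!4} = \sum_{\theta_1, \theta_2} \int |F_{\theta_1}(x,t)|^2 |F_{\theta_2}(x,t)|^2 \, dx \, dt, \]
each cross term with $\theta_1 \neq \theta_2$ is controlled by performing the $t$-integral first along the characteristic direction of $F_{\theta_1}$, producing a gain proportional to the angular separation $|\omega_{\theta_1} - \omega_{\theta_2}|$. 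Summing these contributions over pairs, together with the $L^2$-orthogonality $\sum_\theta \|f_\theta\|_2^2 \lesssim \|f\|_2^2$ of the disjoint-Fourier-support pieces, yields the desired $\lesssim \|f\|_{L^4(\RR^2)}^4$ bound. This geometric step, which exploits the transversality of different sectors on the light cone, is the technical heart of the Mockenhaupt--Seeger--Sogge deduction.
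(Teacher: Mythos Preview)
Your overall route---reduce to half-wave propagators, Littlewood--Paley localize to $|\xi|\sim R$, interpolate with the trivial $L^\infty$ endpoint to reduce to $p=4$, rescale to $N_{R^{-1}}(\Gamma)$, and apply Theorem~\ref{sq fun}---matches exactly what the paper does, which simply cites \cite{MSS} (and Proposition~6.2 of \cite{TV}) for this deduction. The gap is in your final step, bounding
\[
\Bigl\|\Bigl(\sum_\theta |F_\theta|^2\Bigr)^{1/2}\Bigr\|_{L^4(\RR^3)}\ \lesssim\ \|f\|_{L^4(\RR^2)}.
\]
The bilinear mechanism you describe does not produce the gain you claim. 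After your change of variables, the $t$-integral in $\int|F_{\theta_1}|^2|F_{\theta_2}|^2\,dx\,dt$ becomes a line integral of $|f_{\theta_2}|^2$ over a segment of length $|\omega_{\theta_1}-\omega_{\theta_2}|$ together with a Jacobian factor $|\omega_{\theta_1}-\omega_{\theta_2}|^{-1}$, so there is no net gain from the angular separation; and invoking the $L^2$ orthogonality $\sum_\theta\|f_\theta\|_2^2\lesssim\|f\|_2^2$ cannot by itself deliver an $L^4$ conclusion. Even the trivial H\"older bound on each bilinear term, $\|f_{\theta_1}\|_4^2\|f_{\theta_2}\|_4^2$, sums to $(\sum_\theta\|f_\theta\|_4^2)^2$, which is not controlled by $\|f\|_4^4$.

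The paper flags exactly this point in Section~2. The original \cite{MSS} argument was written for the small-cap decomposition and closed using the standard Littlewood--Paley inequality for equally spaced strips in $\RR^2$; with the angular-sector decomposition of Theorem~\ref{sq fun} one needs instead C\'ordoba's $L^4$ angular square function estimate
\[
\Bigl\|\Bigl(\sum_\theta |f_\theta|^2\Bigr)^{1/2}\Bigr\|_{L^4(\RR^2)}\ \lesssim\ \|f\|_{L^4(\RR^2)}
\]
from \cite{C} (or the version in \cite{CS}), combined with a pointwise kernel estimate that reduces the space-time square function to this purely spatial one. That nontrivial $\RR^2$ input is what your sketch is missing; the transversality intuition you invoke is morally related to its \emph{proof}, but it is a separate theorem, not a two-line consequence of $L^2$ orthogonality.
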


\noindent Theorem \ref{locsmooth} follows by combining Theorem \ref{sq fun} with {\color{black} the arguments in} \cite{MSS}.

In \cite{S}, Sogge formulated the local smoothing conjecture, and he noticed that Bourgain's proof of the boundedness of the circular maximal operator in \cite{B} can be used to establish ``local smoothing'' estimates with a nontrivial gain of regularity.  The critical case of Theorem \ref{locsmooth} is when $p=4$ and $\alpha$ is close to zero.  Mockenhaupt, Seeger, and Sogge \cite{MSS} proved that (\ref{locsmootheqn}) holds for $p=4$ with $\alpha > 1/8$, and this was improved afterwards by several authors (\cite{TV}, \cite{W2}{\color{black}, \cite{L}}).
In \cite{W}, Wolff proved the local smoothing conjecture for $p \geq 74$ in the full range\footnote{To be more specific, Sogge originally made the conjecture for $\alpha$ in the range $\alpha > \frac{1}{2}-\frac{2}{p}$ and Wolff confirmed Sogge's conjecture for $p\geq 74$ and $\alpha$ in this range. Later in the work \cite{HNS} of Heo, Nazarov and Seeger it was conjectured further that when $p>4$ the conjecture should hold for $\alpha \geq \frac{1}{2}-\frac{2}{p}$.} of $\alpha$. In that paper, Wolff introduced the idea of decoupling.  His method was extended to higher dimensions by {\L}aba--Wolff \cite{LW} and refined by {\color{black}Garrig{\'o}s--Seeger \cite{GS}\cite{GS2} and Garrig{\'o}s--Schlag--Seeger \cite{GSS}.}  Then in \cite{BD}, Bourgain and Demeter proved a sharp decoupling estimate for the cone in every dimension, in particular proving the local smoothing conjecture {\color{black}in $2+1$ dimensions} for $p \ge 6$ in the full range of $\alpha$.  The sharp decoupling estimate for the cone does not, however, imply the full range of local smoothing estimates -- at the end of the introduction we will discuss what the issue is.

In a different direction, Lee and Vargas \cite{LV} proved a sharp $L^3$ square function estimate using multilinear restriction.

\subsection{Proof Strategy}

One new feature of our approach is that we prove a stronger estimate which works better for induction on scales.  We need a little notation to state this estimate. The precise details and definitions are provided in Section 3.  First we recall the {\color{black} locally constant property} of $f$.  For each sector $\theta$, we let $\theta^*$ denote the dual rectangular box: since $\theta$ has dimensions $1 \times R^{-1/2} \times R^{-1}$, $\theta^*$ has dimensions $1 \times R^{1/2} \times R$.  We call such a $\theta^*$ a plank.  Recall that $|f_\theta|$ is roughly constant{\color{black}\footnote{\color{black} Such kind of ``locally constant'' heuristic will be used a few times in the current paper. To  justify this intuition one can use Corollary 4.3 in \cite{BD2}.    See also Lemma~\ref{lem: locally constant} and Lemma~\ref{lem: convolution} in Section \ref{secparab} of the current paper.}} on each translated copy of $\theta^*$. {\color{black} In this paper we tile $\mathbb{R}^3$ with translated copies of $\theta^*$.} The restriction of $f_\theta$ to one translated copy of $\theta^*$ is called a {\color{black} \emph{wave packet}}.  In addition to the sectors $\theta$, we will consider larger angular sectors $\tau$ with any angle between $R^{-1/2}$ and 1.  We write $d(\tau)$ to denote this angle, which we call the {\color{black}\emph{aperture}} of $\tau$. {

\begin{center}
	\begin{tikzpicture}[scale=1.5]
	\pgfmathsetmacro{\thickx}{0.1}
	\pgfmathsetmacro{\thicky}{0.05}
	\draw (0,0) arc(170:10:2cm and 0.4 cm) coordinate[pos=0] (a);
	\draw (0,0) arc(-170:-10:2cm and 0.4cm) coordinate (b);
	\draw (0,0) arc(-170: -90: 2cm and 0.4cm) coordinate(c);
	\draw (0,0) arc(-170: -70: 2cm and 0.4cm) coordinate(d);
	\coordinate(mid) at ([yshift=-4cm]$(a)!0.5!(b)$);
	\coordinate(start) at ($(mid)!0.5!(a)$);
	\draw (start)--(a);
	\draw[dashed] (start) arc(170:10:1cm and 0.2cm) coordinate[pos=0] (aa);
	\draw (start) arc(-170: -10: 1cm and 0.2cm) coordinate (bb);
	\draw (start) arc(-170: -90: 1cm and 0.2cm) coordinate (cc);
	\draw(start) arc(-170: -70: 1cm and 0.2cm) coordinate (dd);
	\draw (bb)--(b);
	\coordinate (t) at (\thickx, \thicky);
	\coordinate (c1) at ($(c)+ (t)$);
	\coordinate (cc1) at ($(cc)+(t)$);
	\coordinate (d1) at ($(d)+(t)$);
	\coordinate (dd1) at ($(dd)+(t)$);
	\draw[black] (c)--(c1);
	\draw[black] (c1)--(cc1);
	\draw[black] (cc)--(cc1);
	\draw[black](c)--(cc);
	\draw[black](d)--(dd);
	\draw[black](d)--(d1);
	\draw[black](dd)--(dd1);
	\draw[black] (d1)--(dd1);
	\draw[black] (c)--(d);
	\draw[black](cc)--(dd);
	\draw[black](c1)--(d1);
	\draw[black] (cc1)--(dd1);
	\node[left, black] at ($(d)!0.5!(dd)$) {$\tau$};
	\draw[|<->|, black] ($(c1)+( 0, 0.1)$)--($(d1)+(0, 0.1)$);
	\node[above, black] at ($(c1)!0.5!(d1)$) {$ d(\tau)$};

	\end{tikzpicture}
\end{center}

For each $\tau$, we define\footnote{This definition works best if $\tau$ is honestly tiled by $\theta$. In general we abuse the notation a bit: Throughout this paper, by writing ``summing over $\theta \subset \tau$'', we really mean ``summing over all $\theta \in A(\tau)$'' where the collection $A(\tau)$ is chosen as follows: Each $A(\tau)$ only contains those $\theta$'s who intersect $\tau$, and all $A(\tau)$ form a disjoint union $\{\theta\} = \bigsqcup_{\tau} A(\tau)$.}
$f_\tau = \sum_{\theta \subset \tau} f_\theta$, and we define $\tau^*$ to be the dual rectangle to $\tau$.  If $d(\tau) = s$, then $\tau^*$ has dimensions $1 \times s^{-1} \times s^{-2}$, and $|f_\tau|$ is roughly constant on each translated copy of $\tau^*$.  Next we define $U_{\tau,R}$ to be a scaled copy of $\tau^*$ with diameter $R$.  If $d(\tau) = s$, then $U_{\tau,R}$ has dimensions $R s^2 \times R s \times R$.  Note that if $\theta \subset \tau$ and if $T$ is a translated copy of $\theta^*$ which passes through the center of $U_{\tau,R}$, then $T \subset {\color{black}10}U_{\tau,R}$, where $10 U_{\tau,R}$ means the dilation of $U_{\tau,R}$ by a factor of $10$ with respect to its centroid.  For each $\tau$, we tile {\color{black} $\mathbb{R}^3$} by translated copies of $U_{\tau,R}$.

 $${\color{black} \mathbb{R}^3} = \bigsqcup_{U \textrm{ a translated copy of } U_{\tau,R}} U. $$

 \noindent This tiling is natural because for each $\theta \subset \tau$, the support of each wave packet of $f_\theta$ is essentially contained in $\sim 1$ tiles $U$ in the tiling.  Here two quantities  $A\sim B$  means that $A\leq C_1B\leq C_2A$ for some positive absolute constants $C_1$ and $C_2$.  We write $\sum_{U \pp U_{\tau,R}}$ to denote the sum over all the translated copies $U$ of $U_{\tau,R}$ in the tiling of {\color{black} $\mathbb{R}^3$}.

 {\color{black}If $U$ is a translated copy of $U_{\tau,R}$, then we define the square function $S_U f$ associated with $U$ to be
 $$S_U f = (\sum_{\theta \subset \tau}|f_{\theta}|^2)^{1/2} |_U.$$}



 We can now state our main estimate.
%

   \begin{theorem} \label{main} Suppose that $f$ has Fourier support on $N_{R^{-1}}(\Gamma)$.  Then
 	
\begin{equation} \label{L4strong}	\| f \|_{L^4({\color{black} \mathbb{R}^3})}^4  \le C_\eps R^\eps \sum_{R^{-1/2} \le s \le 1} \sum_{d(\tau) = s} \sum_{U \pp U_{\tau,R}} |U|^{-1} \| {\color{black}S_U f} \|_{L^2}^4 .\end{equation}
Here the sum over $s$ is over dyadic values of $s$ in the range $R^{-1/2} \le s \le 1$.
  \end{theorem}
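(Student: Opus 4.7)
The plan is to proceed by strong induction on $R$, with base case $R \leq R_0$ (for some large absolute constant $R_0$) handled by a direct Hölder-type argument: for bounded $R$, all $L^p$ norms of functions with Fourier support in $N_{R^{-1}}(\Gamma)$ are comparable, and the $s=1$ term on the right-hand side of (\ref{L4strong}) already dominates $\|f\|_{L^4}^4$.

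For the inductive step, fix $f$ with $\widehat f$ supported on $N_{R^{-1}}(\Gamma)$. For each dyadic scale $s \in [R^{-1/2}, 1]$, group the $\theta$'s into coarser sectors $\tau$ of aperture $s$, write $f = \sum_\tau f_\tau$, and expand
\[
 |f|^2 = \sum_{d(\tau)=s}|f_\tau|^2 + \sum_{\tau_1 \ne \tau_2} f_{\tau_1}\overline{f_{\tau_2}}.
\]
The heart of the argument is a high-low frequency decomposition of $|f|^2$ at frequency scale $\sim s^2$: because each cross term $f_{\tau_1}\overline{f_{\tau_2}}$ has Fourier support in $\tau_1 - \tau_2$, and the curvature of the cone forces $\tau_1 - \tau_2$ to be $\gtrsim s^2$-separated from the origin in the direction normal to $\Gamma$ whenever $\tau_1 \ne \tau_2$, the low-frequency part of $|f|^2$ agrees with the diagonal $\sum_{d(\tau)=s}|f_\tau|^2$ up to rapidly decaying tails. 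Using the locally constant property of $|f_\tau|^2$ on translates of $U_{\tau,R}$, together with $\int_U |f_\tau|^2 \approx \int_U \sum_{\theta \subset \tau}|f_\theta|^2 = \|S_U f\|_{L^2}^2$, this low-frequency part matches exactly the scale-$s$ contribution on the right-hand side of (\ref{L4strong}).

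The argument then combines two further ingredients: pigeonholing a dominant scale $s_*$ (losing only $O(\log R)$, absorbed in $R^\eps$) and applying parabolic rescaling. The pigeonhole reduces the problem to an estimate of the form
\[
 \|f\|_{L^4}^4 \lesssim R^\eps \sum_{d(\tau)=s_*}\|f_\tau\|_{L^4}^4 + R^\eps \sum_{d(\tau)=s_*}\sum_{U \pp U_{\tau,R}}|U|^{-1}\|S_U f\|_{L^2}^4,
\]
whose second summand is already part of the desired right-hand side. For the first summand, parabolic rescaling of $\tau$ of aperture $s_*$ to a full sector turns $f_\tau$ into a function $\tilde f$ with Fourier support in $N_{(Rs_*^2)^{-1}}(\Gamma)$. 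Since $Rs_*^2 < R$, the inductive hypothesis applies to $\tilde f$, and unwinding the rescaling turns its right-hand side into a sum over sub-sectors $\tau' \subset \tau$ at dyadic scales between $R^{-1/2}$ and $s_*$, which are precisely the remaining terms on the right-hand side of (\ref{L4strong}).

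The main obstacle I expect is making the high-low decomposition quantitatively sharp. The ``high-frequency'' remainder of $|f|^2$ at scale $s$ has mean zero at spatial scale $\sim s^{-2}$ but can concentrate in pointwise peaks, and these must be absorbed either into the scale-$s$ contribution on the right-hand side or, by telescoping, into the next finer dyadic scale's $U$-tiling without double-counting. Arranging the cutoff scales so that all such peaks are accounted for, and ensuring the cumulative loss across the $O(\log R)$ scales remains $R^\eps$ rather than something like $R^{c\log R}$, is the central technical difficulty; it should follow from a careful pigeonhole combined with exploitation of the $U_{\tau,R}$-tiling structure at each intermediate scale.
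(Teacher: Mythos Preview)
Your proposal has the right flavor---a frequency decomposition combined with Lorentz rescaling---but the induction scheme as written cannot close, and the gap is more structural than the ``quantitative sharpness'' issue you flag at the end.

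Consider your displayed inequality
\[
\|f\|_{L^4}^4 \lesssim R^\eps \sum_{d(\tau)=s_*}\|f_\tau\|_{L^4}^4 + R^\eps \sum_{d(\tau)=s_*}\sum_{U \pp U_{\tau,R}}|U|^{-1}\|S_U f\|_{L^2}^4.
\]
If the prefactor on the narrow term really is $R^\eps$, then after rescaling each $f_\tau$ and applying the inductive hypothesis at scale $Rs_*^2$ you pick up $R^\eps \cdot C_\eps (Rs_*^2)^\eps$, which is $R^{2\eps}$ rather than $R^\eps$. If instead the prefactor is an absolute constant $A$, then closing the induction requires $A s_*^{2\eps} < 1$, i.e.\ $s_*$ bounded away from $1$ by an amount depending only on $\eps$---but a pigeonholed $s_*$ gives no such control and may well be $\sim 1$. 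In neither reading does the loop close. Your high--low heuristic for $|f|^2$ does not produce this inequality either: the off-diagonal piece $\sum_{\tau_1\ne\tau_2} f_{\tau_1}\overline{f_{\tau_2}}$ has Fourier support far from the origin, but that tells you nothing useful about $\int |f|^4 = \int (|f|^2)^2$ without further input of exactly the strength you are trying to prove.

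The paper's route is substantially different. It introduces a two-parameter quantity $S(r,R)$ controlling the passage from square-function norms at physical scale $r$ to the right-hand side of (\ref{L4strong}), and the decisive input is a \emph{loss-free} Kakeya-type estimate (Lemma~\ref{incidenceintro}) for $g = \sum_\theta |f_\theta|^2$---not for $|f|^2$. Because $\hat g$ is supported in the much smaller set $\bigcup_\theta (\theta-\theta)$, one can decompose frequency space into shells $\Omega_\sigma$ on which the planks $\theta-\theta$ overlap only $O(1)$-fold; Plancherel then gives $S(r,r^2)\lesssim 1$ with \emph{no} $R^\eps$. The $R^\eps$ loss enters only through a separate Bourgain--Guth/bilinear argument over a bounded range of scales where the cone is well approximated by a parabola (Lemma~\ref{small ball}), and since that range is bounded the loss does not compound under iteration. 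Lorentz rescaling (Lemma~\ref{general}) then glues the two regimes together. Your high--low idea applied to $g$ rather than to $|f|^2$, combined with this two-parameter setup, is essentially what makes the argument work.
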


Let us take a moment to digest the right-hand side of this estimate. {\color{black}For this discussion, suppose that $f$ is essentially supported on one $B_R$.} We start with the term where $s = R^{-1/2}$.  In this case $\tau$ is one of the original sectors $\theta$ of aperture $R^{-1/2}$,  $U_{\tau,R}$ is equal to $\theta^*$, and ${\color{black}|S_U f| = |f_\theta| \big|_U}$.  Since $|S_U f| = |f_\theta|$ is roughly constant on $U$, $$|U|^{-1} \| {\color{black}S_U f} \|_{L^2}^4 \sim \| {\color{black}S_U f} \|_{L^4}^4.$$ If the functions $f_\theta$ are essentially supported on disjoint regions, we would have
$$\| f \|_{L^4}^4 \sim \sum_{d(\theta) = R^{-1/2}} \sum_{U \pp U_{\tau,R}} \| {\color{black}S_U f} \|_{L^4}^4,$$ which matches the term $s = R^{-1/2}$ on the right-hand side of (\ref{L4strong}).  Next consider the term where $s=1$.  In this case, there is only one $\tau$ which covers all of $\Gamma$, and the contribution to the right-hand side is {\color{black}essentially} $|B_R|^{-1} \| S_{B_R} f \|_{L^2}^4 \sim |B_R|^{-1} \| f \|_{L^2(B_R)}^4$.   If $|f|$ is roughly constant on the whole $B_R$, then we would have $${\color{black}\| f \|_{L^4(\mathbb{R}^3)}^4 \sim } \| f \|_{L^4(B_R)}^4 \sim |B_R|^{-1} \| f \|_{L^2(B_R)}^4 \sim |B_R|^{-1} \| {\color{black}S_{B_R} f} \|_{L^2(B_R)}^4,$$ which matches the term $s = 1$ on the right-hand side of (\ref{L4strong}).  Finally we consider the intermediate values of $s$.  It may happen that $f = f_\tau$ for some $\tau$ and that $f$ is essentially supported on a particular translated copy $U$ of $U_{\tau,R}$ and that $|f|$ is roughly constant on $U$.  In this case, $${\color{black}\| f \|_{L^4(\mathbb{R}^3)}^4 \sim} \| f_\tau \|_{L^4({\color{black}U})}^4 \sim  |U|^{-1} \| f_\tau \|_{L^2(U)}^4 \sim |U|^{-1} \| {\color{black}S_U f} \|_{L^2}^4,$$ which is the term corresponding to $U$ on the right-hand side of (\ref{L4strong}).

The proof of Theorem \ref{main} is based on a new Kakeya-type estimate, which controls the overlapping of the planks in the wave packet decomposition of $f$.

\begin{lemma}\label{incidenceintro}  Suppose that $\hat f$ has support on $N_{R^{-1}}(\Gamma)$.  Let $g$ denote the {\color{black}(squared)} square function $g=\sum_{d(\theta)=R^{-1/2}} |f_{\theta}|^2$.  Then
	$$\int_{\color{black} \mathbb{R}^3} |g|^2 \lesssim \sum_{R^{-1/2} \leq s\leq 1}   \sum_{d(\tau)=s} \sum_{U\pp U_{\tau,R}} {\color{black} |U|^{-1}} \|{\color{black}S_U f}\|_{L^2}^4,$$
	where $A\lesssim B$ means that $A\leq C B$ for some absolute positive constant $C$.
\end{lemma}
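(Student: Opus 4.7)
My plan is to telescope $\int g^2$ over the dyadic $\tau$-scales, reducing everything to a single bilinear cross-term estimate.

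\textbf{Telescoping setup.} For each dyadic $s \in [R^{-1/2}, 1]$ set $G_s := \sum_{d(\tau)=s} g_\tau^2$, so that $G_{R^{-1/2}} = \sum_\theta |f_\theta|^4$ and $G_1 = g^2$. For a $\tau$ of aperture $2s$ with half-subsectors $\tau_1', \tau_2'$ of aperture $s$, writing $g_\tau = g_{\tau_1'} + g_{\tau_2'}$ and squaring gives
\[
G_{2s} - G_s \;=\; 2 \sum_{d(\tau) = 2s} g_{\tau_1'} g_{\tau_2'},
\]
so
\[
\int g^2 \;=\; \sum_\theta \int |f_\theta|^4 \;+\; 2 \sum_{R^{-1/2} \le s < 1} \sum_{d(\tau) = 2s} \int g_{\tau_1'} g_{\tau_2'}.
\]
The base term is controlled by the locally constant property of $|f_\theta|^2$ on the plank $\theta^*$: one has $\int |f_\theta|^4 \approx \sum_{U \pp \theta^*} |U|^{-1} \|f_\theta\|_{L^2(U)}^4$, which matches exactly the $s = R^{-1/2}$ piece of the right-hand side of the lemma.

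The full lemma then reduces to the following cross-term estimate: for each $\tau$ of aperture $2s$,
\[
\int g_{\tau_1'} g_{\tau_2'} \;\lesssim\; \sum_{U \pp U_{\tau,R}} |U|^{-1} \|S_U f\|_{L^2}^4. \qquad (\dagger)
\]
Summing $(\dagger)$ over $\tau$ at each scale and over dyadic $s$ reproduces the right-hand side of the lemma with no scale-dependent loss, because every $\tau$ appearing on the right is visited exactly once by the telescope.

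\textbf{The main obstacle: proving $(\dagger)$.} This is the genuinely Kakeya-type ingredient. Each $g_{\tau_i'} = \sum_{\theta \subset \tau_i'} |f_\theta|^2$ is a sum of $\theta^*$-wave-packet functions, and for any pair $\theta_1 \subset \tau_1'$, $\theta_2 \subset \tau_2'$ one has angular separation $\sim s$; the planks $\theta_1^*, \theta_2^*$ therefore meet in volume $\sim R/s$ rather than the full plank volume $R^{3/2}$. To prove $(\dagger)$ I plan to (i) expand $\int g_{\tau_1'} g_{\tau_2'}$ as a sum over wave-packet intersections weighted by the locally constant values $c_{T_1}^2 c_{T_2}^2$; (ii) use the $\sim s$ transversality of the plank pairs inside each $U \pp U_{\tau,R}$ to bound the plank-intersection count; and (iii) apply Cauchy--Schwarz to convert the pointwise bilinear pairing into $|U|^{-1} \|S_U f\|_{L^2}^4$. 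The principal difficulty is step (ii): a naive pointwise Cauchy--Schwarz inside $U$ overshoots the correct bound by a factor of order $Rs^2$, so the sharp bound requires the precise incidence count on $\theta^*$-planks at mutual angle $\sim s$ within $U_{\tau,R}$, combined with the convolution/locally-constant lemmas from Section~\ref{secparab} to make the wave-packet approximation rigorous.
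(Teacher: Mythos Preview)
Your telescoping setup and the base term are fine, but the cross-term estimate $(\dagger)$ is not merely unproven, it is \emph{false} as stated. Take $f=f_{\theta_1}+f_{\theta_2}$ with $\theta_1\subset\tau_1'$, $\theta_2\subset\tau_2'$ at angular separation $s\sim 1$, and let $|f_{\theta_i}|^2$ be (morally) the characteristic function of a single plank $T_i\pp\theta_i^*$ through the origin. A direct computation in the $(c_1,t_1,n_1)$ frame shows $|T_1\cap T_2|\sim R^{1/2}/s^2$ (your figure $R/s$ is also off), so the left side of $(\dagger)$ is $\sim R^{1/2}$. On the right side $U_{\tau,R}\sim B_R$, and $|B_R|^{-1}\bigl(|T_1|+|T_2|\bigr)^2\sim R^{-3}(R^{3/2})^2\sim 1$. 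Thus $(\dagger)$ fails by a factor $R^{1/2}$, and no sharper incidence counting can rescue it since the counterexample involves a single pair of planks. What actually happens in this example is that the cross term at scale $s\sim 1$ is absorbed by the \emph{scale $R^{-1/2}$} term on the right-hand side of the lemma, not by the scale-$s$ term; your telescope allocates each cross term to the wrong scale.

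The paper's proof is entirely different and avoids this trap by working on the Fourier side. One writes $\int g^2=\int|\hat g|^2$, notes that $\hat g$ is supported on $\Omega=\bigcup_\theta(\theta-\theta)$, and partitions $\Omega=\bigsqcup_\sigma\Omega_\sigma$ into dyadic shells according to how many centered planks $\tilde\theta$ overlap at a given frequency. The key geometric input is a bounded-overlap lemma for the centered planks $\Theta(\sigma,\xi)$ on each $\Omega_\sigma$ (Lemmas~\ref{lemTheta1}--\ref{lemTheta2}); after Cauchy--Schwarz and Plancherel this produces exactly the box $\Theta^*\sim U_{\tau,R}$ with $d(\tau)=\sigma^{-1}R^{-1/2}$, and the scale allocation comes out automatically. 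There is no physical-space bilinear Kakeya step.
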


\noindent Recall that each function $|f_\theta|$ is morally constant on the translated copies of $\theta^*$, where each $\theta^*$ is a $1 \times R^{1/2} \times R$ plank.  The estimate in Lemma \ref{incidenceintro} is a Kakeya-type bound on the overlapping of these planks.  The new feature of this estimate compared to previous Kakeya-type estimates is the structure of the right-hand side, which is designed to match the right-hand side of Theorem \ref{main}.  The terms on the right-hand side keep track of how planks are packed into the rectangular boxes $U$.  If the planks are spread out in the sense that each box $U$ does not contain too many planks, then it gives a strong bound.

In \cite{W}, Wolff connected Kakeya-type estimates for overlapping planks to incidence geometry problems in the spirit of the Szemer\'edi--Trotter problem.  He adapted the cutting method from incidence geometry to this setting and he used it to estimate the overlaps of planks.  He applied those geometric estimates at many scales to prove his results on local smoothing.  In \cite{BD}, Bourgain and Demeter apply multilinear Kakeya estimates at many scales to prove decoupling.  In this paper, we apply Lemma \ref{incidenceintro} at many scales to prove Theorem \ref{main}.

Lemma \ref{incidenceintro} is proven using Fourier analysis.  By Plancherel, $\int |g|^2 = \int |\hat g|^2$.  Roughly speaking, we decompose {\color{black}the} Fourier space, and the contributions of different regions to $\int |\hat g|^2$ correspond to the different terms on the right-hand side of Lemma \ref{incidenceintro}.  This approach to proving Kakeya-type estimates is based on some work of Orponen in projection theory \cite{O} and {\color{black}is related} to Vinh's work \cite{V} about incidence geometry over finite fields.  It builds on \cite{GSW}, which applies similar ideas to rectangles and tubes instead of planks.

\subsection{Local estimates}

{\color{black}Our Theorem \ref{main} and Lemma \ref{incidenceintro} have ``local'' counterparts involving polynomially decaying weights that are essentially supported on a given box. For any box $B_R$ of diameter $R$, define the weight $$w_{B_R, E} (x) = (1+\frac{\mathrm{dist} (x, B_R)}{R})^{-E}.$$

Here is the local version of Theorem \ref{main}.

\begin{theorem} \label{mainlocal} If $f$ has Fourier support on $N_{R^{-1}}(\Gamma)$, then for any $E > 0$,
 	
\begin{equation} \label{L4strong'}	\| f \|_{L^4(B_R)}^4  \le C_{\eps, E} R^\eps \sum_{R^{-1/2} \le s \le 1} \sum_{d(\tau) = s} \sum_{U \pp U_{\tau,R}} |U|^{-1} \| {w_{B_R, E} \cdot \color{black}S_U f}\|_{L^2}^4 .\end{equation}
Here the sum over $s$ is over dyadic values of $s$ in the range $R^{-1/2} \le s \le 1$.
\end{theorem}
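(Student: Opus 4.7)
The plan is to deduce the local estimate from the already-established global Theorem \ref{main} by multiplying $f$ by a smooth cutoff concentrated on $B_R$. Concretely, we fix a Schwartz function $\chi_0 : \RR^3 \to \RR$ with $\chi_0 \gtrsim 1$ on the unit ball, $|\chi_0(y)| \lesssim_E (1+|y|)^{-E}$ for all $y$, and $\widehat{\chi_0}$ supported in the ball of radius $1/100$ about the origin. Letting $x_0$ denote the center of $B_R$ and setting $\chi(x) := \chi_0((x-x_0)/R)$, we get $|\chi| \gtrsim 1$ on $B_R$, $|\chi(x)| \lesssim_E w_{B_R,E}(x)$ pointwise, and $\widehat{\chi}$ supported in the ball of radius $R^{-1}/100$ about the origin.

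We then set $g := f\chi$ and declare the decomposition $g_\theta := f_\theta\,\chi$, so that $\sum_\theta g_\theta = g$ and $\widehat{g_\theta} = \widehat{f_\theta} * \widehat{\chi}$ is supported in the $R^{-1}/100$-neighborhood of $\theta$, while $\widehat{g}$ lies in the $R^{-1}/100$-neighborhood of $N_{R^{-1}}(\Gamma) \subset N_{(R/2)^{-1}}(\Gamma)$. We apply Theorem \ref{main} to $g$ at scale $R' = R/2$: by the footnote's uniformity in the choice of $\{g_\theta\}$ satisfying (a) and (b), and since $R' \sim R$ so that the sectors and boxes $U_{\tau,R'}$ differ from those at scale $R$ by only absolute constants, this yields
\begin{equation*}
\|g\|_{L^4(\RR^3)}^4 \le C_\eps R^\eps \sum_{R^{-1/2}\le s\le 1}\sum_{d(\tau)=s}\sum_{U\pp U_{\tau,R}} |U|^{-1}\|S_U g\|_{L^2}^4.
\end{equation*}

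Reading off both sides finishes the proof. Since $|\chi| \gtrsim 1$ on $B_R$, the left side dominates $\|f\|_{L^4(B_R)}^4$. Because $\chi$ is independent of $\theta$, on each $U$ one has
\begin{equation*}
S_U g \;=\; |\chi| \,\Bigl(\sum_{\theta\subset\tau}|f_\theta|^2\Bigr)^{1/2}\Big|_U \;\le\; C_E\, w_{B_R,E}\cdot S_U f,
\end{equation*}
so $\|S_U g\|_{L^2}^4 \le C_E^4 \,\|w_{B_R,E}\cdot S_U f\|_{L^2}^4$, and combining these gives \eqref{L4strong'} after absorbing constants into $C_{\eps,E}$.

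The main (and only) obstacle in this reduction is the standard bookkeeping subtlety that the cutoff $\chi$ slightly enlarges every Fourier support, so $\widehat{g_\theta}$ is not supported exactly on $\theta$ and $\widehat{g}$ is not supported exactly on $N_{R^{-1}}(\Gamma)$. This is handled entirely through the uniformity clause in the footnote together with the harmless rescaling from $R$ to $R/2$; no additional harmonic-analytic input is needed beyond the global Theorem \ref{main} itself.
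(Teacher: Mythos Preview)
Your proposal is correct and follows essentially the same approach as the paper: multiply $f$ by a bump $\phi_R$ (your $\chi$) with $|\phi_R|\gtrsim 1$ on $B_R$, rapid decay outside, and $\widehat{\phi_R}$ supported in $B_{R^{-1}}$, then apply the global Theorem~\ref{main} to the decomposition $\phi_R f=\sum_\theta \phi_R f_\theta$. Your handling of the slight Fourier-support enlargement via the harmless rescaling $R\mapsto R/2$ and the footnote's uniformity clause is exactly the routine bookkeeping the paper leaves implicit.
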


In the above theorem, the sum on the right-hand side is also ``morally localized''. It is $$\sum_{R^{-1/2} \le s \le 1} \sum_{d(\tau) = s} \sum_{U \pp U_{\tau,R}, U \subset 100B_R} |U|^{-1} \| {\color{black}S_U f} \|_{L^2}^4$$
plus some decaying error term.  To prove Theorem \ref{mainlocal}, we multiply $f$ by  a rapidly decaying bump function $\phi_R$ adapted to $B_R$ such that $|\phi_R|> \frac{1}{C} >0$ on $B_R$ and $\hat \phi_R$ is supported on the ball $B_{R^{-1}}$ centered at the origin, and then we apply Theorem \ref{main} to the decomposition $\phi_R f = \sum_{\theta} \phi_R f_{\theta}$.

}}

\subsection{Relationship with decoupling}

While working on this project, we were strongly influenced by ideas related to decoupling, but the proof given here does not use the decoupling theorem per se.  It does make use of a nice observation that Bourgain and Demeter used to reduce the decoupling theorem for the cone to the decoupling theorem for the paraboloid (See \cite{BD}{\color{black}. Similar ideas can also be traced back to the iteration argument of Pramanik--Seeger \cite{PS}}).  Instead of working with a truncated cone of height 1, {\color{black}Bourgain and Demeter} worked with a truncated cone of height $1/K$ for a large constant $K$, denoted $\Gamma_{\frac{1}{K}}$.  This shorter truncated cone can be approximated by a parabola at various scales.  We will also work with $\Gamma_{\frac{1}{K}}$, allowing us to bring into play some estimates for the parabola.

As we mentioned above, sharp decoupling theorems do not imply the full range of local smoothing estimates or the square function estimate.  Let us explain a little further what the issue is.  The decoupling theorem for the cone gives the following bounds, which are sharp for every $p$ between 2 and $\infty$:

\begin{equation} \label{decp<6} \| f \|_{L^p(\RR^3)} \le C_\eps R^\eps \left(\sum_{d(\theta) = R^{-1/2}} \| f_\theta \|_{L^p(\RR^3)}^2 \right)^{1/2} \textrm{   if } 2 \le p \le 6, \end{equation}

\begin{equation} \label{decp>6}  \| f \|_{L^p(\RR^3)} \le C_\eps R^{\frac{1}{4} - \frac{3}{2p} + \eps} \left(\sum_{d(\theta) = R^{-1/2}} \| f_\theta \|_{L^p(\RR^3)}^2 \right)^{1/2} \textrm{   if } p \ge 6. \end{equation}

\noindent For any given $p$, (\ref{decp>6}) implies local smoothing for that $p$.  But the inequality (\ref{decp>6}) cannot hold for any $p < 6$ because the power of $R$ would be negative.  The power of $R$ in a decoupling inequality cannot be negative because of the following example: suppose that for each $\theta$,  $| f_\theta |$ is approximately the characteristic function of $B_R$, and at each point $|f| \sim \left( \sum_\theta |f_\theta|^2 \right)^{1/2}$.  In this case, $\| f \|_{L^p} \sim \left(\sum_\theta \| f_\theta \|_{L^p}^2 \right)^{1/2}$ for all $p$.  This example is not a counterexample for local smoothing, but to prove local smoothing for some $p < 6$ we have to do better than inequality (\ref{decp<6}) in some scenarios: for instance, if the supports of $f_\theta$ are {\color{black}essentially} disjoint at time 0.   Roughly speaking, we need to improve the bound (\ref{decp<6}) when $p < 6$ and when each $f_\theta$ is {\color{black}essentially} supported on a sparse region of $B_R$.  Theorem \ref{main} makes this precise.

There are similar issues in the problem of decoupling into small caps, which was studied in \cite{DGW}.  For instance, consider an exponential sum of the form

$$f(x_1, x_2) = \sum_{j=1}^N a_j e \left( \frac{j}{N} x_1 + \frac{j^2}{N^2} x_2 \right), \textrm{ with } |a_j| \le 1 \textrm{ for all } j. \eqno{(*)}$$

\noindent The decoupling theorem for the parabola gives a sharp bound on $ \| f \|_{L^p(B_{N^2})}$ for every $p$.  But suppose we want to bound $ \| f \|_{L^p(B_R)}$ for some $R < N^2$.  If we divide the parabola into arcs $\theta$ of length $R^{-1/2}$, then each $f_\theta$ is a sum of $\sim N R^{-1/2}$ terms of $(*)$.  It's not hard to estimate the largest possible value of $\| f_\theta \|_{L^p(B_R)}$ for each $p$.  Combining this bound for $\| f_\theta \|_{L^p(B_R)}$ with decoupling gives an upper bound for $\| f \|_{L^p(B_R)}$, but it is not sharp.  When $\| f_\theta \|_{L^p(B_R)}$ is close to its largest value, then $|f_\theta|$ is concentrated on a sparse region of $B_R$.  The argument in \cite{DGW} exploits this sparsity to improve the bound from decoupling and give sharp estimates for $\| f \|_{L^p(B_R)}$ for every $p$.  The proof of the main theorem here builds on that proof.

The paper \cite{DGW} also considers a decoupling problem in which the cone is divided into small squares instead of sectors.   This problem was raised by Bourgain and Watt \cite{BW} in their work on the Gauss circle problem.  The paper \cite{DGW} shows that the square function estimate Theorem \ref{sq fun} implies a sharp estimate for this decoupling problem.

\vskip10pt

{\bf Acknowledgements.} We would like to thank Ciprian Demeter for sharing his ideas and for many helpful conversations.  He proposed the problem of decoupling into small caps and suggested improving decoupling when each $f_\theta$ is concentrated in a sparse region.  We would also like to thank Misha Rudnev for sharing thoughtful comments about \cite{GSW} which helped us in this project.  We would like to thank Terence Tao for helpful comments that improved the exposition of the proof of Proposition 3.4. We would like to thank Zhipeng Lu and Xianchang Meng for pointing out several typos in an earlier version. {\color{black} LG was supported by a Simons Investigator Award. HW was supported by the Simons Foundation grant for David Jerison.  RZ was supported by the National Science Foundation under Grant Number DMS-1856541. He would like to thank Andreas Seeger for helpful historical remarks about square functions and local smoothing. Part of this work was done when RZ was visiting MIT and he would like to thank MIT for the warm hospitality.}

We would like to thank the anonymous referees for their thorough readings and many helpful suggestions.

\section{Proof of the square function estimate from Theorem \ref{main}}

In this section, we explain how Theorem \ref{main} implies the square function estimate Theorem \ref{sq fun}, and we {\color{black} discuss} how {\color{black}the latter} implies {\color{black}the} local smoothing {\color{black}Theorem} \ref{locsmooth}.  First we recall the statement of Theorem \ref{sq fun}:

\begin{theorem*}
	For any function $f$ whose Fourier transform is supported on  $N_{R^{-1}}(\Gamma)$, we have
	$$\|f\|_{L^4({\color{black}\mathbb{R}^3})} \leq C_{\epsilon} R^{\epsilon} \|(\sum_{d(\theta)=R^{-1/2}} |f_{\theta}|^2)^{1/2}\|_{L^4{\color{black}(\mathbb{R}^3)}}.$$
\end{theorem*}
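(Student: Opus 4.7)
The plan is to deduce Theorem \ref{sq fun} from Theorem \ref{main} by bounding each summand on the right-hand side of (\ref{L4strong}) by an integral of the global (squared) square function, and then collapsing the resulting sums. Let $h := \sum_{d(\theta) = R^{-1/2}} |f_\theta|^2$, so the target estimate reads $\|f\|_{L^4(\mathbb{R}^3)}^4 \leq C_\eps R^\eps \int_{\mathbb{R}^3} h^2$. For any sector $\tau$ with $d(\tau) = s$, set $h_\tau := \sum_{\theta \subset \tau} |f_\theta|^2$, so that $(S_U f)^2 = h_\tau |_U$ for every tile $U \pp U_{\tau,R}$.

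First I would apply Cauchy--Schwarz on the single box $U$ to the nonnegative function $h_\tau$:
\begin{equation*}
\|S_U f\|_{L^2}^4 = \left(\int_U h_\tau\right)^2 \leq |U| \int_U h_\tau^2,
\end{equation*}
which gives $|U|^{-1} \|S_U f\|_{L^2}^4 \leq \int_U h_\tau^2$. Summing over the tiling of $\mathbb{R}^3$ by translates of $U_{\tau,R}$ collapses the $U$-sum to $\int_{\mathbb{R}^3} h_\tau^2$. Next, for each fixed dyadic $s$, I would sum over $\tau$ with $d(\tau) = s$. Since every $\theta$ of aperture $R^{-1/2}$ belongs to exactly one such $\tau$, the pointwise identity $\sum_{d(\tau) = s} h_\tau = h$ holds and $h_\tau \leq h$, so
\begin{equation*}
\sum_{d(\tau) = s} \int_{\mathbb{R}^3} h_\tau^2 \;\leq\; \int_{\mathbb{R}^3} h \sum_{d(\tau) = s} h_\tau \;=\; \int_{\mathbb{R}^3} h^2.
\end{equation*}
Finally, the outer dyadic sum over $R^{-1/2} \leq s \leq 1$ contributes only a factor of $O(\log R)$, which is harmlessly absorbed into $R^\eps$. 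Feeding these bounds into (\ref{L4strong}) yields $\|f\|_{L^4}^4 \leq C_\eps R^\eps \int h^2 = C_\eps R^\eps \|h^{1/2}\|_{L^4(\mathbb{R}^3)}^4$, which is exactly Theorem \ref{sq fun}.

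This reduction is essentially bookkeeping and I do not expect a genuine obstacle once Theorem \ref{main} is in hand. The only step that warrants a moment's thought is the Cauchy--Schwarz bound on each $U$: it is lossy on a single box, since $h_\tau$ is locally constant on the much smaller scale of $\tau^*$ rather than on all of $U$ (which is an $R$-dilate of $\tau^*$). However, this per-tile loss is precisely compensated once one sums $h_\tau^2$ over $\tau$ with fixed aperture and uses $h_\tau \leq h$ to telescope to $\int h^2$. This is consistent with the paper's philosophy: Theorem \ref{main} is strictly stronger than Theorem \ref{sq fun}, and the extra tile-by-tile structure it encodes is what enables the induction on scales needed to prove Theorem \ref{main} itself, not the final extraction of Theorem \ref{sq fun}.
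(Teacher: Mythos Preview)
Your proposal is correct and follows essentially the same argument as the paper: both apply Cauchy--Schwarz on each tile $U$ to get $|U|^{-1}\|S_Uf\|_{L^2}^4 \le \int_U h_\tau^2$, then use the pointwise bound $\sum_{d(\tau)=s} h_\tau^2 \le h^2$ to collapse the $\tau$-sum and absorb the dyadic $s$-sum as a $\log R$ loss. The only cosmetic difference is that you spell out the intermediate inequality $\sum_\tau h_\tau^2 \le h\sum_\tau h_\tau = h^2$, whereas the paper writes it as $\sum_\tau (\sum_{\theta\subset\tau}|f_\theta|^2)^2 \le (\sum_\theta |f_\theta|^2)^2$ directly.
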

\begin{proof}
	
	
	
	Let $U$ be a translated copy of $U_{\tau,R}${\color{black}. Recall that} 
	$${\color{black}\|S_U f\|_{L^2}^2 = \int_U \sum_{\theta\subset \tau}|f_{\theta}|^2.}$$
	By Cauchy--Schwarz,
	$$  \|{\color{black}S_U f}\|_{L^2}^4 \leq |U| \int_{U} ( \sum_{\theta\subset \tau}|f_{\theta}|^2 )^2.$$
	Therefore,
	\begin{align*}
	\sum_{d(\tau)=s}~~\sum_{U\pp U_{\tau,R}} |U|^{-1} \|{\color{black}S_U f}\|_{L^2}^4
	&\leq \sum_{d(\tau)=s} \int_{{\color{black}\mathbb{R}^3}} (\sum_{\theta\subset \tau} |f_{\theta}|^2)^2\\
	&\leq \int_{{\color{black}\mathbb{R}^3}} (\sum_{\theta} |f_{\theta}|^2)^2. \qedhere
	\end{align*}
	Summing in $s$ (dyadic numbers) contributes an additional $\log R$ factor compared to Theorem~\ref{main}.

	\end{proof}

{\color{black}Essentially by} \cite{MSS}, the square function estimate in Theorem \ref{sq fun} implies {\color{black}the local smoothing Theorem \ref{locsmooth}} for the wave equation in 2+1 dimensions. This implication was sketched in Proposition 6.2 of \cite{TV}. One technical difference is that the square function considered in \cite{MSS} was the one in terms of ``small caps''  $\zeta$, $R^{-1/2}$-squares on $\Gamma$. Instead of the Littlewood--Paley estimate corresponding to equally spaced decompositions in $\mathbb{R}^2$ used in \cite{MSS} (see (1.9) and the following first two lines on page 214 of \cite{MSS}), one needs such an estimate for angular decompositions. In the $L^4$ case, such an angular square function estimate was proved by C{\'o}rdoba (see ii) on the first page of  \cite{C}). Another proof\footnote{See Proposition 4.6 in  \cite{CS}. That proposition has two parameters and C{\'o}rdoba's estimate (up to an $R^{\varepsilon}$-loss) can be viewed as a simpler one-parameter variant. See also the remark in the end of Section 4 in \cite{CS}} by Carbery--Seeger could be found in \cite{CS}.

\section{Outline of the proof of the main theorem} \label{secoutline}

In this section, we give an overview of the proof of Theorem \ref{main} and outline the rest of the paper.  First we review the statement of Theorem \ref{main}, and present it in a more detailed way.

Let $\Gamma$ be the truncated light cone $\Gamma= \{\xi_1^2+\xi_2^2= \xi_3^2, 1/2\leq |\xi_3|\leq 1\}$.  We now precisely define the sectors discussed in the introduction.
For each point $\xi \in \Gamma$ with $\xi_3 = 1$, we define a basis of $\RR^3$ as follows: the core line direction is $\mathbf{c}(\xi)=  (\xi_{1}, \xi_{2}, 1)$, the normal direction is $\mathbf{n}(\xi)=  (\xi_{1},\xi_{2}, -1),$ and the tangent direction is $\mathbf{t}(\tau)=(-\xi_{2}, \xi_{1}, 0)$.   Now for each such $\xi$, and each $s < 1$, we define the sector with direction $\xi$ and aperture $s$ as follows:




$$ \tau(s, \xi) =\{ \omega \in \RR^3: 1 \le \mathbf{c}(\xi) \cdot \omega \le 2 \textrm{ and } | \mathbf{n}(\xi) \cdot \omega| \le s^2 \textrm{ and } | \mathbf{t}(\xi) \cdot \omega | \le s \}. $$

\noindent Here $s= d(\tau)$ is the aperture of $\tau$ as described in the introduction.

For each $s$, We choose $10 s^{-1}$ evenly spaced $\xi$ in the circle $\Gamma \cap \{ \xi_3 = 1\}$, and we let $\SSS_{s}$ be the set of $\tau(s, \xi)$ for these $\xi$.  It is straightforward to check that these form a finitely overlapping cover of $N_{s^2}(\Gamma)$.

In the introduction, we considered a finitely-overlapping cover of $N_{R^{-1}} \Gamma$ by sectors $\theta$ with dimensions $\sim R^{-1} \times R^{-1/2} \times 1$.  The set of these sectors is $\SSS_{R^{-1/2}}$.

For each $\tau = \tau(s, \xi)$, and each $\rho \ge s^{-2}$, we define a box $U_{\tau,\rho}$ as follows:

\begin{equation} \label{defUtau} U_{\tau,\rho} = \{ x \in \RR^3: |\mathbf{c}(\xi) \cdot x| \le \rho s^{2} \textrm{ and } | \mathbf{n}(\xi) \cdot x| \le \rho \textrm{ and } | \mathbf{t}(\xi) \cdot {\color{black}x} | \le \rho s \}. \end{equation}

The box $U_{\tau,\rho}$ is approximately the convex hull of the union of $\theta^*$ over all sectors $\theta \subset \tau$ with $d(\theta) = \rho^{-1/2}$.  In other words, $U_{\tau,\rho}$ is approximately the smallest rectangular box such that for any $\rho^{-1/2}$-sector $\theta\subset \tau$, if a translated copy of $\theta^{*}$ intersects $U_{\tau,\rho}$, then it must lie in $10 U_{\tau,\rho}$.  We tile {\color{black}$\mathbb{R}^3$} by translated copies of $U_{\tau,\rho}$.

{\color{black} If $U$ is a translated copy of $U_{\tau,\rho}$, then we define $S_U f$ by

\begin{equation} \label{defSUf} S_U f= (\sum_{\theta \in \SSS_{\rho^{-1/2}}: \theta \subset \tau} |f_{\theta}|^2)^{1/2} |_U. \end{equation}

As written, this definition appears to depend upon $U$, $\tau$, and $\rho$.  But in fact the parameters $\rho$ and $\tau$ can be read off from $U$.  The parameter $\rho$ is the diameter of $U$.  The aperture $d(\tau) = s$ can be read off from the dimensions of $U$, which are $\rho s^2 \times \rho s \times \rho $.  And the direction $\xi$ of $\tau$ can be read off from the direction of $U$.  To illustrate this, suppose that $U$ is $B_r$ - a ball of radius $r$.  The diameter of $U$ is $r$, and so $\rho = r$.  The dimensions of $U$ are $r \times r \times r$, and so $d(\tau) = 1$.  Since $\tau$ has aperture 1, it covers all of $\Gamma$.  Therefore,

$$ S_{B_r} f = (\sum_{\theta \in \SSS_{r^{-1/2}}} |f_\theta|^2)^{1/2}  \big|_{B_r}. $$

 In particular,  $|S_{B_1}f|$ is just $|f|$ restricted to ${B_1}$.}



We define $S(r, R)$ as the smallest constant such that for every function $f$ with $\text{supp}\hat{f}\subset N_{R^{-1}}(\Gamma)$,
\begin{equation}\label{induct coeff}
\sum_{B_r\subset {\color{black}\mathbb{R}^3}} |B_r|^{-1} \|{\color{black}S_{B_r}f}\|_{L^2(B_r)}^4 \leq S(r, R) \underset{ R^{-1/2}\leq s \leq 1 }{\sum}~~\sum_{\tau \in \SSS_s}~~ \underset{U\pp U_{\tau, R}}{\sum} |U|^{-1} \|{\color{black}S_U f}\|_{L^2}^4.
\end{equation}

 On the left-hand side of  inequality~(\ref{induct coeff}), $\sum_{B_r\subset {\color{black}\mathbb{R}^3}}$ means {\color{black}the} sum over the balls $B_r$ in a finitely overlapping cover of $\mathbb{R}^3$. On the right-hand side of inequality~(\ref{induct coeff}), the first sum, $\sum_{R^{-1/2}\leq s\leq 1}$, means {\color{black}the} sum over dyadic numbers $s$ between $R^{-1/2}$ and $1$.  The last sum, $\sum_{U\pp U_{\tau, R}}$, means the sum over a set of translates of $U_{\tau, R}$ which tile $\RR^3$.


{\color{black}By H\"older's inequality, $S(r, R)<\infty$ for any $0< r, R < \infty$. We will only consider $S(r, R)$ when $r\leq R$.} Theorem \ref{main} is equivalent to the bound $S(1,R) \le C_\eps R^\eps$ since {\color{black}$|S_{B_1}f| = |f|$ on any $B_1$ and} $|f|$ is morally constant on {\color{black}$B_1$}.  We will derive Theorem \ref{main} from a series of bounds for $S(r,R)$.

In Section \ref{secincid}, we prove the Kakeya-type estimate Lemma \ref{incidenceintro}, and we use it to prove

\begin{lemma}\label{ball inflation} For {\color{black}any} $r \ge 10$, {\color{black}$r_1 \in [r, r^2]$,}
	
	$$S({\color{black}r_1}, r^2) 
	{\color{black}\leq C}.$$
	
\end{lemma}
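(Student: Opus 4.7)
The plan is to combine the Kakeya-type estimate (Lemma \ref{incidenceintro}) at scale $R=r^2$ with an $L^2$-orthogonality argument on balls of radius $\ge r$. Let $g = \sum_{\theta \in \SSS_{r^{-1}}} |f_{\theta}|^2$. Applying Lemma \ref{incidenceintro} directly at $R = r^2$ yields
\[
\int_{\RR^3} g^2 \lesssim \sum_{r^{-1}\le s\le 1}\,\sum_{\tau \in \SSS_s}\,\sum_{U \pp U_{\tau,r^2}} |U|^{-1}\,\|S_U f\|_{L^2}^4,
\]
which is precisely the right-hand side of \eqref{induct coeff} for $S(r_1, r^2)$. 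Hence it suffices to show that the left-hand side of \eqref{induct coeff} is bounded, up to an absolute constant, by $\int_{\RR^3} g^2$.

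The only subtlety is that $S_{B_{r_1}}f$ is built from the coarser caps $\theta_1 \in \SSS_{r_1^{-1/2}}$, whereas $g$ is built from the finer caps $\theta \in \SSS_{r^{-1}}$. The passage from the former to the latter is accomplished by $L^2$-orthogonality on each $B_{r_1}$: since $r_1 \ge r$, a Schwartz cutoff adapted to $B_{r_1}$ has Fourier transform essentially supported in a ball of radius $r_1^{-1}\le r^{-1}$, while distinct caps $\theta, \theta' \in \SSS_{r^{-1}}$ inside a common $\theta_1$ have Fourier supports separated by at least $\sim r^{-1}$ in the tangent direction. Expanding
\[
\int_{B_{r_1}} |f_{\theta_1}|^2 = \sum_{\theta,\theta' \subset \theta_1} \int_{B_{r_1}} f_\theta \overline{f_{\theta'}},
\]
applying Plancherel on the cross terms, and bounding the resulting rapidly decaying kernel by Schur's test gives $\int_{B_{r_1}}|f_{\theta_1}|^2 \lesssim \sum_{\theta \subset \theta_1}\int_{B_{r_1}} |f_\theta|^2$ (plus Schwartz tails). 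Summing over $\theta_1$, and using that each fine cap $\theta$ lies in exactly one $\theta_1$, we obtain the key estimate
\[
\|S_{B_{r_1}}f\|_{L^2(B_{r_1})}^2 \lesssim \int_{B_{r_1}} g.
\]

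A single application of Cauchy--Schwarz, $\bigl(\int_{B_{r_1}} g\bigr)^2 \le |B_{r_1}|\int_{B_{r_1}} g^2$, then gives
\[
|B_{r_1}|^{-1}\|S_{B_{r_1}}f\|_{L^2(B_{r_1})}^4 \lesssim \int_{B_{r_1}} g^2,
\]
and summation over a finitely overlapping tiling of $\RR^3$ by balls $B_{r_1}$ yields $\text{LHS of \eqref{induct coeff}} \lesssim \int_{\RR^3} g^2$, which together with the Kakeya estimate completes the argument.

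The main obstacle is the orthogonality step, particularly in the borderline case $r_1 = r$, where the Fourier separation of the fine caps exactly matches the dual scale of $B_{r_1}$, so that Schur's test alone produces only a constant loss with no quantitative decay. One compensates by working against the polynomial weight $w_{B_{r_1}, E}$ from the ``Local estimates'' subsection, or equivalently by invoking the locally-constant machinery discussed in the footnote referencing Corollary 4.3 of \cite{BD2} and Lemmas \ref{lem: locally constant}--\ref{lem: convolution} in Section \ref{secparab}; these tools convert the approximate orthogonality into a clean estimate whose error tails are harmless under the final summation over $B_{r_1}$.
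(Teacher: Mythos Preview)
Your proposal is correct and follows essentially the same route as the paper: apply local $L^2$ orthogonality on each $B_{r_1}$ (the paper packages this as Lemma~\ref{localorthogonality}, which is exactly the weighted version of your Plancherel/Schur argument) to get $\|S_{B_{r_1}}f\|_{L^2}^2 \lesssim \int w_{B_{r_1},E}\, g$, then Cauchy--Schwarz, sum over $B_{r_1}$, and invoke Lemma~\ref{incidenceintro}. The only cosmetic difference is that the paper carries the weight $w_{B_{r_1},E}$ explicitly from the outset rather than treating it as a tail correction at the end.
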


Next we bring into play a trick from the proof of decoupling for the cone in \cite{BD}: instead of working with $\Gamma$ we work with a subset of $\Gamma$ that lies close to a short parabolic cylinder.  We let $P$ denote an arc of a parabola of length $\sim 1$ lying in $\Gamma$.  For any $K \ge 10$, we define $\Gamma_{\frac{1}{K}}$ to be the $1/K$-neighborhood of $P$ {\color{black}in $\Gamma$}.  We will eventually choose $K$ to be a large constant {\color{black}depending on $\eps$} (which remains fixed as $R \rightarrow \infty$).  The precise formula for $\Gamma_{\frac{1}{K}}$ is designed to make Lorentz rescaling work in a clean way, and we give the formula in Section \ref{lorentz} when we discuss Lorentz rescaling.  We can define a sector $\tau \subset \Gamma_{\frac{1}{K}}$ and its aperture $d(\tau)$ in the same way as before (again see Section \ref{lorentz}).  Then we define $S_K(r,R)$ as the smallest constant such that (\ref{induct coeff}) holds for every $f$ with $\text{supp } \hat f \subset N_{R^{-1}} (\Gamma_{\frac{1}{K}})$.  Since $\Gamma_{\frac{1}{K}} \subset \Gamma$, $S_K(r,R) \le S(r,R)$.  On the other hand, since $K$ will be {\color{black}a} chosen constant, $S_K(r,R)$ is almost {\color{black}equal} to $S(r,R)$ and we can use it equally well to prove Theorem \ref{main}.

If $R = K$, then $N_{R^{-1}}(\Gamma_{\frac{1}{K}})$ is the $1/K$-neighborhood of the parabolic arc $P$, and the restriction theory for the parabola can be used to study $S_K(1,K)$.  In Section \ref{secparab} we use this idea to prove the following lemma.

\begin{lemma}\label{small ball} For any $K \ge 10$, any ${\color{black}1\le } r \le K$, and any $\delta > 0$,
	$S_K(r, K) \leq C_{\delta} K^{\delta}$.
\end{lemma}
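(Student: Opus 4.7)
The plan is to exploit the fact that when $R=K$ the Fourier support $N_{K^{-1}}(\Gamma_{1/K})$ is essentially the $K^{-1}$-neighborhood of the parabolic arc $P$, so the problem reduces to an $L^4$ estimate adapted to a parabola (more precisely, a parabolic cylinder in $\RR^3$), where C\'ordoba's classical square function estimate is available with $O(1)$ loss. The allowed $K^\delta$ loss in the conclusion is more than enough to absorb both the logarithmic sum over dyadic scales $s \in [K^{-1/2}, 1]$ and any constants introduced by the Lorentz rescaling of Section~\ref{lorentz}.

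First I would use the Cauchy--Schwarz reduction exactly as in the proof of Theorem~\ref{sq fun}. For each $B_r$,
$$|B_r|^{-1}\|S_{B_r}f\|_{L^2(B_r)}^4 \le \int_{B_r}\Bigl(\sum_{\tau\in\SSS_{r^{-1/2}}}|f_\tau|^2\Bigr)^2,$$
and summing over the tiling of $\RR^3$ by balls $B_r$ gives
$$\sum_{B_r}|B_r|^{-1}\|S_{B_r}f\|_{L^2(B_r)}^4 \lesssim \int_{\RR^3}\Bigl(\sum_{\tau\in\SSS_{r^{-1/2}}}|f_\tau|^2\Bigr)^2.$$
This rewrites the left-hand side of (\ref{induct coeff}) as an $L^2$-norm squared of the nonnegative function $g = \sum_\tau |f_\tau|^2$, which is well-suited to Fourier-side analysis.

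Next I would run the strategy of Lemma~\ref{incidenceintro} but adapted to the parabolic (rather than conical) geometry of $\Gamma_{1/K}$. By Plancherel, $\int g^2 = \int |\hat g|^2$ with $\hat g$ supported on $\bigcup_\tau(\tau-\tau)$, and one dyadically decomposes this support according to the scale $s\in [K^{-1/2},1]$ at which the overlap pattern of the boxes $\tau-\tau$ changes. Each dyadic piece matches a single term in $\sum_{s,\tau',U}|U|^{-1}\|S_Uf\|_{L^2}^4$ on the right-hand side of (\ref{induct coeff}). Because $\Gamma_{1/K}$ has only one curved direction, the overlap of the $\tau-\tau$ boxes is much milder than in the full-cone case, and C\'ordoba's sharp $L^4$ square function estimate for the parabola (promoted to $\RR^3$ by Fubini in the flat direction) gives a clean $O(1)$ term-by-term bound. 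Summing over the $O(\log K)$ dyadic values of $s$ then yields $S_K(r,K) \lesssim \log K \le C_\delta K^\delta$.

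The main obstacle is making the Fourier-side partition of $\hat g$ in the previous paragraph rigorous: the $\tau$-sectors of various apertures must be handled via the Lorentz rescaling developed in Section~\ref{lorentz}, whose precise formulation for $\Gamma_{1/K}$ is designed so that the parabolic structure transforms cleanly under rescaling, and one has to check that the decomposition really produces exactly the RHS of (\ref{induct coeff}). Once this setup is in place, the argument is parallel to the one underlying Lemma~\ref{incidenceintro}, but with the simpler parabolic plank geometry replacing the more intricate conical one, which is what allows us to avoid any power loss in $K$.
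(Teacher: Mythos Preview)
Your first reduction via Cauchy--Schwarz is correct and is also the paper's first move. The gap is in your second step. You propose to run the Lemma~\ref{incidenceintro} machinery directly on $g=\sum_{\tau\in\SSS_{r^{-1/2}}}|f_\tau|^2$, but this $g$ lives at the \emph{coarse} scale $r^{-1/2}$, whereas the right-hand side of \eqref{induct coeff} (with $R=K$) is built from $S_Uf$, which involves the \emph{finest} scale $\theta\in\SSS_{K^{-1/2}}$ and boxes $U\pp U_{\tau',K}$ of diameter $K$. A Plancherel decomposition of $\hat g$ into pieces according to the overlap of the sets $(\operatorname{supp}\hat f_\tau)-(\operatorname{supp}\hat f_\tau)$ would naturally produce terms of the shape $|V|^{-1}\bigl(\int_V\sum_{\tau\subset\tau'}|f_\tau|^2\bigr)^2$ with $V$ of diameter $r$, not $|U|^{-1}\bigl(\int_U\sum_{\theta\subset\tau'}|f_\theta|^2\bigr)^2$ with $|U|\sim K$. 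Converting the former to the latter requires local $L^2$-orthogonality of the $f_\theta$ on $V$, but for the smallest $\tau'$ the tangential side of $V$ has length $\sim r^{1/2}<K^{1/2}$, which is too short to separate the $\theta$'s. Your sketch never addresses this scale mismatch, and invoking C\'ordoba's square function inequality (which bounds $\|f\|_4$ \emph{above} by a square function) does not help, since what is needed is the opposite direction.

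The paper handles this by the Fefferman--C\'ordoba biorthogonality identity: after splitting $\{\theta\}$ into $O(1)$ sparse families $A_j$ one has $\int\bigl(\sum_\tau|f_{j,\tau}|^2\bigr)^2\sim\int|f_j|^4$, valid at \emph{every} intermediate aperture. This collapses the $r$-dependence and reduces to the $r=1$ case, which is Lemma~\ref{lem: parabola}; the paper then proves Lemma~\ref{lem: parabola} by a Bourgain--Guth broad/narrow decomposition together with induction on $K$ and the bilinear estimate Lemma~\ref{iterate}. Note that once one has the biorthogonality step, an alternative (and arguably more in the spirit of your proposal) is to apply it again at the finest scale to get $\int|f_j|^4\sim\int\bigl(\sum_\theta|f_{j,\theta}|^2\bigr)^2$ and then invoke Lemma~\ref{incidenceintro} directly; but either way the reverse square function identity is the missing ingredient in your outline.
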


Theorem \ref{main} will follow by combining Lemma \ref{ball inflation} and Lemma \ref{small ball} with a {\color{black}Lorentz} rescaling argument.
%
    We review the Lorentz rescaling in Section \ref{lorentz}.  We use it in Section \ref{secusingrescaling} to prove the following lemma, which relates $S_K(r,R)$ for various values of $r, R$:

\begin{lemma}\label{general}
	For any $r_1< r_2 \leq r_3$,
	$$S_K(r_1, r_3)\leq \log r_2 \cdot S_K(r_1, r_2)\max_{r_2^{-1/2}\leq s \leq 1}S_K(s^2r_2,  s^2 r_3).$$
\end{lemma}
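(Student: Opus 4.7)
The plan is to first invoke $S_K(r_1,r_2)$ at the intermediate scale $r_2$ and then, on each sector $\tau\in\SSS_s$ appearing on its right-hand side, apply Lorentz rescaling to convert $\tau$ back into the full cone $\Gamma_{\frac{1}{K}}$ and invoke $S_K(s^2r_2,s^2r_3)$ at the rescaled scale.

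Since $r_2\le r_3$, any $f$ with $\hat f$ supported in $N_{r_3^{-1}}(\Gamma_{\frac{1}{K}})$ is also supported in $N_{r_2^{-1}}(\Gamma_{\frac{1}{K}})$, so directly from the definition of $S_K(r_1,r_2)$,
$$\sum_{B_{r_1}}|B_{r_1}|^{-1}\|S_{B_{r_1}}f\|_{L^2}^4 \le S_K(r_1,r_2)\sum_{r_2^{-1/2}\le s\le 1}\sum_{\tau\in\SSS_s}\sum_{U\pp U_{\tau,r_2}}|U|^{-1}\|S_Uf\|_{L^2}^4.$$

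Next, fix $s$ and $\tau\in\SSS_s$ and let $\Phi_\tau$ be the (physical-space) Lorentz rescaling from Section \ref{lorentz} that sends $\tau$ to the full cone. Set $g=f\circ\Phi_\tau^{-1}$, so that $\hat g$ is supported in $N_{(s^2r_3)^{-1}}(\Gamma_{\frac{1}{K}})$, and $\Phi_\tau$ sends the tiling by $U_{\tau,r_2}$ to a tiling by $B_{s^2r_2}$, sends each sub-box $U_{\tilde\tau,r_3}$ for $\tilde\tau\subset\tau$ of aperture $\tilde s$ to $U_{\tilde\tau',s^2r_3}$ with $\tilde\tau'$ of aperture $\tilde s/s$, and sends sectors $\theta\subset\tau$ at apertures $r_2^{-1/2}$ and $r_3^{-1/2}$ to sectors at $(s^2r_2)^{-1/2}$ and $(s^2r_3)^{-1/2}$ respectively. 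Applying $S_K(s^2r_2,s^2r_3)$ to $g$ and undoing the change of variables, the Jacobian $|\det\Phi_\tau|=s^3$ multiplies both sides and cancels, leaving
$$\sum_{U\pp U_{\tau,r_2}}|U|^{-1}\|S_Uf\|_{L^2}^4 \le S_K(s^2r_2,s^2r_3)\sum_{r_3^{-1/2}\le\tilde s\le s}\sum_{\substack{\tilde\tau\in\SSS_{\tilde s}\\ \tilde\tau\subset\tau}}\sum_{\tilde U\pp U_{\tilde\tau,r_3}}|\tilde U|^{-1}\|S_{\tilde U}f\|_{L^2}^4.$$
Summing this over $\tau\in\SSS_s$ merges the inner sums into $\sum_{\tilde\tau\in\SSS_{\tilde s}}$, and summing over the dyadic values $s\in[r_2^{-1/2},1]$ each $\tilde s\in[r_3^{-1/2},1]$ is counted at most $\log r_2$ times. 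Combining with the first inequality and invoking the definition of $S_K(r_1,r_3)$ as the smallest constant produces the claimed bound.

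The main obstacle is the bookkeeping in the rescaling step: one must verify that $\Phi_\tau$ intertwines the rectangular boxes and sectors at both the $r_2$ and $r_3$ scales, so that both sides of $S_K(s^2r_2,s^2r_3)$ applied to $g$ pull back cleanly to the desired sums for $f$. These are essentially geometric verifications about how a cone-preserving linear map acts on the $U_{\tau,\rho}$ and on the sectors in $\SSS_{\rho^{-1/2}}$, deferred to the discussion of Lorentz rescaling in Section \ref{lorentz}; once they are in place the volume Jacobian $s^3$ appears on both sides and cancels, so that the only loss in the argument is the $\log r_2$ factor coming from redistributing the dyadic double sum over scales.
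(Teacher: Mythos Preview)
Your proposal is correct and follows essentially the same route as the paper: apply the definition of $S_K(r_1,r_2)$, then on each sector $\tau$ of aperture $s$ use Lorentz rescaling to reduce to $S_K(s^2r_2,s^2r_3)$ (this is the paper's Lemma~\ref{rescaling}), and finally collect the $\log r_2$ overcount from the double dyadic sum. One small slip: when you set $g=f\circ\Phi_\tau^{-1}$ and claim $\hat g$ is supported in $N_{(s^2r_3)^{-1}}(\Gamma_{\frac{1}{K}})$, you implicitly need $g=f_\tau\circ\Phi_\tau^{-1}$ rather than the full $f$; this is harmless since $S_Uf=S_Uf_\tau$ for $U\pp U_{\tau,r_2}$, but it should be said.
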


This lemma is an important motivation for working with $S_K(r,R)$.  It allows Lemma \ref{ball inflation} and Lemma \ref{small ball} to be applied at many different scales.  A key point of studying Theorem \ref{main} instead of trying to prove Theorem \ref{sq fun} directly is that it allows this multiscale analysis to come into play.




Assuming the  lemmas, we now prove bounds on $S_K(r,R)$ and use them to deduce Theorem~\ref{main}.

\begin{prop} \label{inductiveprop} For any $\eps > 0$, there exists $K = K(\eps)$ so that for any $1 \le r \le R$, we have
	
	$$ S_K(r,R) \le {\widetilde C}_\eps (R/r)^\eps. $$
\end{prop}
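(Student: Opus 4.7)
The plan is to prove Proposition~\ref{inductiveprop} by induction on the ratio $R/r$, using Lemma~\ref{general} as the recursive step and Lemma~\ref{small ball} as the non-inductive input. Fix $\eps > 0$, choose $\delta = \eps/2$, and pick $K = K(\eps)$ to be a large constant to be specified below. By the Lorentz rescaling reviewed in Section~\ref{lorentz}, the quantity $S_K(r,R)$ depends essentially only on the ratio $R/r$, so it suffices to prove $S_K(1,R) \le \widetilde{C}_\eps R^\eps$ for all $R \ge 1$.

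For the inductive step, apply Lemma~\ref{general} with $r_1 = 1$, $r_2 = K$, $r_3 = R$ (assuming $R > K$; otherwise Lemma~\ref{small ball} applies directly):
\[
S_K(1, R) \le \log K \cdot S_K(1, K) \cdot \max_{K^{-1/2} \le s \le 1} S_K(s^2 K, s^2 R).
\]
Bound the first factor by Lemma~\ref{small ball}: $S_K(1, K) \le C_\delta K^\delta$. Each term $S_K(s^2 K, s^2 R)$ in the maximum has ratio $R/K$, and by Lorentz rescaling each is essentially equal to $S_K(1, R/K)$. Thus the recursion becomes
\[
S_K(1, R) \le (\log K \cdot C_\delta K^\delta) \cdot S_K(1, R/K).
\]
Iterating this relation $n = \lceil \log_K R \rceil$ times reduces the residual ratio to $O(1)$ (where Lemma~\ref{small ball} gives a final constant bound), yielding
\[
S_K(1, R) \le (\log K \cdot C_\delta K^\delta)^n \cdot O(1) \lesssim R^{\,\delta \,+\, \log(C_\delta \log K)/\log K}.
\]
The exponent on the right tends to $\delta = \eps/2$ as $K \to \infty$; choose $K$ large enough that it is at most $\eps$, and choose $\widetilde{C}_\eps$ large enough to absorb the base-case constant. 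This gives $S_K(1, R) \le \widetilde{C}_\eps R^\eps$, hence $S_K(r, R) \le \widetilde{C}_\eps (R/r)^\eps$ for general $r \ge 1$ by Lorentz rescaling.

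The main obstacle is the $\log r_2$ factor appearing in Lemma~\ref{general}. Without care this factor could be as large as $\log R$ at each iteration, in which case $n = \log_K R$ iterations would accumulate a factor $(\log R)^{\log_K R} = R^{\log\log R/\log K}$, which is not bounded by $R^\eps$ for fixed $K$. The resolution is that Lorentz rescaling lets us normalize to $r_1 = 1$ (so $r_2 = K$) at \emph{every} step of the iteration, reducing the log factor to the constant $\log K$, which is then beaten by the $K^{\eps - \delta}$ gain. Ensuring the rescaling acts cleanly is what motivates the precise setup of $\Gamma_{\frac{1}{K}}$ and the definitions in Section~\ref{lorentz}; a secondary bookkeeping task is handling the base-case range $R \le K$, which follows directly from Lemma~\ref{small ball}.
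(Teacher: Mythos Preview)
Your proof has a genuine gap: the claim that ``$S_K(r,R)$ depends essentially only on the ratio $R/r$ by Lorentz rescaling'' is not justified, and in fact it is not true. The Lorentz rescaling $\mathcal{L}$ of Section~\ref{lorentz} takes a \emph{sector} $\tau$ of aperture $s$ to all of $\Gamma_{\frac{1}{K}}$; on physical space, $\mathcal{L}^*$ sends the anisotropic box $U_{\tau,r}$ to a round ball $B_{s^2 r}$ (Lemma~\ref{LUtau}), but it does \emph{not} send a round ball $B_r$ to another round ball. Consequently there is no rescaling that identifies $S_K(r,R)$ with $S_K(1,R/r)$, nor one that identifies $S_K(s^2 K, s^2 R)$ with $S_K(1, R/K)$ as you assert. (Isotropic dilation also fails, since $\Gamma_{\frac{1}{K}}$ is a truncated piece of the cone and is not dilation-invariant.) Without this normalization, after one application of Lemma~\ref{general} you must control $S_K(s^2 K, s^2 R)$ for the full range $K^{-1/2}\le s\le 1$; when $s$ is close to $1$ the first argument is $\sim K$, not $1$, and your iteration scheme no longer applies.

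The paper's proof confronts exactly this issue by running the induction on the ratio $R/r$ for \emph{all} $r$ simultaneously, and splitting into two cases. When $r\le K^{1/2}$ it applies Lemma~\ref{general} with $r_2=K^{1/2}r\le K$ and bounds $S_K(r,K^{1/2}r)$ by Lemma~\ref{small ball}; when $r\ge K^{1/2}$ it takes $r_2=r^2$ and bounds $S_K(r,r^2)$ by Lemma~\ref{ball inflation}. You never invoke Lemma~\ref{ball inflation}, but it is precisely what handles the large-$r$ regime where the $\log r_2$ factor in Lemma~\ref{general} is $\sim\log r$ rather than $\log K$; the gain of $(R/r^2)^\eps$ versus $(R/r)^\eps$ in that case is $r^{-\eps}$, which beats $\log r$ once $r\ge K^{1/2}$ and $K$ is large. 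Your identification of the $\log r_2$ factor as the main obstacle is correct, but the resolution is this two-case split, not a rescaling back to $r=1$.
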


\begin{proof} First we note that if $r > R^{1/2}$, then Lemma \ref{ball inflation} tells us that $ S_K(r,R) \le S(r, R) \le C$, and so the conclusion holds.

Let $K=K(\epsilon) > 10$ be a constant depending only on $\epsilon$ that we will choose below.  (The constant $K(\eps)$ will depend on $\eps$ and on the constants in Lemma \ref{ball inflation} and Lemma \ref{small ball}.)

We apply induction on the ratio $R/r$.

Our base case is when $R/r\leq \sqrt{K}$.  We have already checked the proposition in case $r > R^{1/2}$.  If $r \le R^{1/2}$ and $R/r \le \sqrt{K}$, then $R \le K$.  In this case, since $K$ is a constant depending only on $\eps$, it is straightforward to check that $S_K(r, R)$ is bounded by a constant $\tilde C_K = \tilde C_\eps$.  This finishes the base case.

Next we proceed with the induction.  Given a pair $(r, R)$, our induction hypothesis is the following: for any pair $(r', R')$ with $R'/r'\leq R/2r$, we have $S_K(r', R')\leq \tilde{C}_{\epsilon}(R'/r')^{\epsilon}$.

The proof of the induction has two cases, depending on whether $r \le K^{1/2}$.

If $r\leq K^{1/2}$, we apply Lemma~\ref{general} with $r_1 = r$, $r_2 = K^{1/2}r$, and $r_3 = R$, which gives

	$$S_K(r, R) \leq  \log K \cdot S_K(r, K^{1/2}r)  \max_{r_2^{-1/2}\leq s\leq 1} S_K(s^2K^{1/2} r, s^2R). $$
	
\noindent We bound the first $S_K$ factor using Lemma \ref{small ball}, and we bound the second $S_K$ factor using induction.  These bounds give

	$$S_K(r, R) \leq  \log K \cdot S_K(r, K^{1/2}r)  \max_{r_2^{-1/2}\leq s\leq 1} S_K(s^2K^{1/2} r, s^2R) \leq \log K \cdot  C_{\delta} {\widetilde C}_{\epsilon}K^{\delta} (\frac{R}{K^{1/2} r})^{\epsilon}.$$
	
\noindent We choose $\delta = \epsilon/{\color{black}4}$, and then we choose ${\color{black}K=}K(\eps)$ large enough so that  $\log K \cdot C_{\eps/{\color{black}4}}K^{-\epsilon/{\color{black}4}} \leq 1$, and the induction closes {\color{black}in this case}.
	
Now suppose $r \ge K^{1/2}$.  Recall from the start of the proof that we may assume $r \le R^{1/2}$.  We apply Lemma \ref{general} with $r_1 = r$, $r_2 = r^2$, and $r_3 = R$, which gives

$$S_K(r, R) \leq 2\log r \cdot S_K(r, r^2) \max_{r^{-1}\leq s \leq 1} S_K(s^2 r^2, s^2 R). $$

We bound the first $S_K$ factor using Lemma \ref{ball inflation} and we bound the second $S_K$ factor using induction, giving
	
	$$S_K(r, R) \leq 2\log r \cdot S_K(r, r^2) \max_{r^{-1}\leq s \leq 1} S_K(s^2 r^2, s^2 R) \leq 2\log r \cdot {\color{black}C} {\widetilde C}_{\epsilon}
	(\frac{R}{r^2})^{\epsilon}.$$

\noindent We  choose $K = K(\eps)$ large enough so that for all $r \ge K^{1/2}$, we have $2\log r\cdot {\color{black}C} r^{-{\color{black} \epsilon}} \leq 1$, and the induction closes {\color{black}in this case}.
\end{proof}

Finally we show how Proposition \ref{inductiveprop} implies Theorem \ref{main}.

\begin{proof}
 Proposition \ref{inductiveprop} implies that for every $\eps > 0$, we can choose $K = K(\eps)$ so that $S_K(1, R) \le C_\eps R^\eps$ for all $R$.  Suppose that the support of $\hat f$ is contained in $N_{R^{-1}}(\Gamma_{\frac{1}{K}}) \subset B_3$.  Since $|f|$ is morally constant on unit balls, we have\footnote{Strictly speaking, one need to apply Lemma~\ref{lem: locally constant} and Lemma~\ref{lem: convolution} to justify the first ``$\lesssim$'' in inequality~(\ref{SKmain}). This is similar to the arguments in Section~\ref{secparab} where we do in full details.}

\begin{equation} \label{SKmain} \int_{{\color{black}\mathbb{R}^3}} |f|^4 \lesssim \sum_{B_1 \subset {\color{black}\mathbb{R}^3}} \| f \|_{L^2(B_1)}^4 {\color{black}=\sum_{B_1 \subset \mathbb{R}^3} \|S_{B_1}f\|_{L^2(B_1)}^4} \le C_\eps R^\eps \underset{ R^{-1/2}\leq s \leq 1 }{\sum}~~\sum_{d(\tau)=s}~~ \underset{U\pp U_{\tau,R}}{\sum} |U|^{-1} \|{\color{black}S_U f}\|_{L^2}^4. \end{equation}

This inequality is essentially Theorem \ref{main} except that we assumed that $\hat f$ is supported on $N_{R^{-1}}(\Gamma_{\frac{1}{K}})$ instead of $N_{R^{-1}}(\Gamma)$.  Since $N_{R^{-1}}(\Gamma)$ can be covered by $O(K) = O_\eps(1)$ affine copies of $\Gamma_{\frac{1}{K}}$, we can reduce Theorem \ref{main} to (\ref{SKmain}).  Here are the details.

Take $\{A_j\}_{1\leq j\lesssim K}$ to be a collection of linear transformations such that $\Gamma \subset \bigcup A_j (\Gamma_{\frac{1}{K}})$. Here each $A_j$ is a composition of a scaling by a factor $\sim 1$ and a rotation in the $(\xi_1, \xi_2)-$plane\footnote{One can choose  $\lesssim 1$ rotations $R_k$  such that $\bigcup_{k} R_k(\Gamma_{\frac{1}{K}})$ covers  $\Gamma(h)= \Gamma\cap \{ h\leq \xi_3\leq h+K/10\}$ for some $h\sim 1$.   Then we choose $\lesssim K$ dilations $D_l$ such that $\Gamma\subset \bigcup_{l} D_l (\Gamma(h))$. We define $A_j = D_l R_k$. for some $l$ and $k$.}. Similarly, we can arrange that  $N_{R^{-1}}(\Gamma)\subset \bigcup A_j \big( N_{R^{-1}}(\Gamma_{\frac{1}{K}}) \big)$.
	Let ${\color{black}\{\psi_j\}}$ be a ${\color{black}C^{\infty}}$ partition of unity subordinate to this covering. {\color{black}This partition of unity only depends on $K$.}
	If $f$ is a function  whose Fourier transform is supported on $N_{R^{-1}}(\Gamma)$, then $\hat f=\sum_j  \psi_j \hat f$.   Define $f_j$ by $\hat f_j = \psi_j \hat f$  {\color{black}and ${\hat f_{j,\theta}} = \psi_j {\hat f_{\theta}}$}.  The support of $\hat f_j$ is contained in $A_j (N_{R^{-1}}(\Gamma_{\frac{1}{K}}))$.  Since (\ref{SKmain}) is invariant under rotations and approximately invariant under rescaling by a factor $\sim 1$, (\ref{SKmain}) holds for each function $f_j$.

	Now {\color{black}by} the  triangle inequality and H\"{o}lder's inequality,
	\begin{align*}
	\|f\|_{L^4({\color{black}\mathbb{R}^3})}^4 & \lesssim K^3 \sum_j \|f_j\|_{L^4({\color{black}\mathbb{R}^3})}^4 \\
	&\lesssim K^3 C_{\epsilon} R^{\epsilon} \sum_j \underset{R^{-1/2}\leq s\leq 1}{\sum}~~\sum_{d(\tau)=s} ~~\underset{U\pp U_{\tau,R}}{\sum} |U|^{-1} \|{\color{black}S_U f_{j}}\|_{L^2}^4\\
	&\lesssim K^3 C_{\epsilon} R^{\epsilon} \underset{R^{-1/2}\leq s\leq 1}{\sum}~~
	\sum_{d(\tau)=s}~~\underset{U\pp U_{\tau,R}}{\sum} |U|^{-1} ( \sum_j\|{\color{black}S_U f_{j}}\|_{L^2}^2)^2\\
	&\lesssim_{{\color{black}K}} C_{\epsilon} R^{\epsilon} \underset{R^{-1/2}\leq s\leq 1}{\sum}~~\sum_{d(\tau)=s} \underset{U\pp U_{\tau,R}}{\sum} |U|^{-1} \|{\color{black}S_U f}\|_{L^2}^4.
	\end{align*}
	
	{\color{black}To see the last inequality, note that $f_{j, {\color{black}\theta}} = f_{{\color{black}\theta}} * {\check \psi_j}$ and ${\check \psi_j}$ is rapidly decaying outside the ball of radius $K$  centered at the origin. Hence, by Lemma~\ref{lem: convolution},  each $\|f_{j, {\color{black}\theta}}\|_{L^2 (B_1)} \lesssim_K \|f_{{\color{black}\theta}}\|_{L^2 (w_{B_1, E})}$ for any polynomially decaying weight $w_{B_1, E}$. It suffices to take $E$ large enough.}
	
	Since $K$ is a constant only depending on $\epsilon$, this gives Theorem \ref{main}. \end{proof}

\section{ A Kakeya-type estimate} \label{secincid}
In this section, we prove the Kakeya-type estimate Lemma \ref{incidenceintro}, and we use it to prove Lemma \ref{ball inflation}.  First we recall the statement.

\begin{lemma*} Suppose that $\hat f$ has support on $N_{r^{-2}}(\Gamma)$.  Let $g$ denote the {\color{black}(squared)} square function $g=\sum_{\theta \in \SSS_{r^{-1}}} |f_{\theta}|^2$.  Then
	$$\int_{\color{black} \mathbb{R}^3} |g|^2 \lesssim \sum_{R^{-1/2} \leq s\leq 1}   \sum_{\tau \in \SSS_s} \sum_{U\pp U_{\tau,R}} {\color{black} |U|^{-1}} \|{\color{black}S_U f}\|_{L^2}^4.$$
	
\end{lemma*}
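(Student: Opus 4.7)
By Plancherel, $\int_{\mathbb{R}^3} |g|^2 = \int_{\mathbb{R}^3} |\hat g|^2$, where $\hat g = \sum_{\theta \in \SSS_{R^{-1/2}}} \widehat{|f_\theta|^2}$ and each summand $\widehat{|f_\theta|^2}$ is Fourier-supported in the slab $\theta - \theta$ of dimensions roughly $1 \times R^{-1} \times R^{-1/2}$ along $\theta$'s core, normal, and tangent axes. The plan is to decompose Fourier space into (essentially) disjoint regions indexed by dyadic scales $s \in [R^{-1/2}, 1]$ and sectors $\tau \in \SSS_s$: $\mathbb{R}^3 \approx \bigsqcup_{s, \tau} A_\tau$. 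The region $A_\tau$ should have two properties: (a) on $A_\tau$, only sectors $\theta \subset \tau$ contribute to $\hat g$, so that $\hat g|_{A_\tau} = \widehat{g_\tau}|_{A_\tau}$ where $g_\tau := \sum_{\theta \subset \tau}|f_\theta|^2$; and (b) the contribution $\int_{A_\tau}|\widehat{g_\tau}|^2$ can be controlled by $\sum_{U \pp U_{\tau,R}} |U|^{-1} \|S_U f\|_{L^2}^4$.

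Geometrically, (a) is arranged by observing that for a given $\xi \in \mathbb{R}^3$, the set of sectors $\theta$ with $\xi \in \theta - \theta$ spans an angular range that shrinks as the magnitude of $\xi$ in the core direction grows. Assigning $\xi$ to the dyadic scale matching this angular spread, and then to the unique $\tau \in \SSS_s$ containing the relevant $\theta$'s, yields the partition. Concretely $A_\tau$ is not a simple box but an annular shell in the core coordinate of $\tau$, of approximate dimensions $R^{-1/2}/s \times s R^{-1/2} \times R^{-1/2}$ in $\tau$'s frame, and this shell is essentially disjoint from $\theta - \theta$ whenever $\theta \not\subset \tau$.

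For (b), one uses Plancherel in reverse to rewrite
\[
\int_{A_\tau}|\widehat{g_\tau}|^2 = \|g_\tau * \check\chi_{A_\tau}\|_{L^2}^2
\]
for a smooth cutoff $\chi_{A_\tau}$ adapted to $A_\tau$. The goal is to interpret the right side as a quantity of the form $\sum_{U \pp U_{\tau,R}} |U|^{-1} (\int_U g_\tau)^2 = \sum_{U} |U|^{-1}\|S_U f\|_{L^2}^4$, using the identity $\|S_U f\|_{L^2}^2 = \int_U g_\tau$. Summing over all scales $s$ and $\tau \in \SSS_s$ (at most $O(\log R)$ dyadic scales, absorbed into $\lesssim$) then produces the claim.

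The main obstacle is executing step (b) rigorously. Because $A_\tau$ is an annular shell rather than the naive dual box of $U_{\tau,R}$, one cannot simply identify $g_\tau * \check\chi_{A_\tau}$ with a $U_{\tau,R}$-scale average of $g_\tau$; instead, the connection must be made by combining the Fourier localization on $A_\tau$ with the wave-packet structure of each $|f_\theta|^2$, each wave packet being essentially constant on its plank $\theta^*$, with planks fitting inside tiles $U \pp U_{\tau,R}$. Along the way, $\chi_{A_\tau}$ must be smoothed into a Schwartz bump, polynomially decaying weights must be carried through the argument, and the $\{A_\tau\}$-partition must be shown to have only $O(1)$ overlap across scales. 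These are technical but routine parts that the $\lesssim$ symbol is designed to absorb.
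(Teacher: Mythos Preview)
Your plan is the paper's proof: Plancherel, a dyadic decomposition of the Fourier support $\Omega=\bigcup_\theta(\theta-\theta)$ into shells $\Omega_\sigma$, identification of the contributing $\theta$'s on each piece, then Plancherel back. Two points, however, are off.

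First, the ``main obstacle'' you describe in (b) is not an obstacle. Once you have restricted to the annular piece $A_\tau$ and replaced $\hat g$ by $\widehat{g_\tau}$ there, you may \emph{enlarge} the Fourier cutoff from $\chi_{A_\tau}$ to a smooth bump $\eta_\Theta$ adapted to the full box $\Theta\supset A_\tau$ (of dimensions $\sim R^{-1}/s^2 \times R^{-1}/s \times R^{-1}$ in $\tau$'s core/tangent/normal frame; the dimensions you wrote are incorrect). This only increases the integral since $\eta_\Theta\ge 1$ on $A_\tau$. Now $\check\eta_\Theta$ \emph{is} an $L^1$-normalized kernel essentially supported on $\Theta^*\approx U_{\tau,R}$, and since $g_\tau\ge 0$ one obtains $|\check\eta_\Theta * g_\tau(x)|\lesssim |U|^{-1}\int_U g_\tau$ on each tile $U$ immediately. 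No wave-packet decomposition of $|f_\theta|^2$ is needed anywhere.

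Second, property (a) as you state it is slightly false and cannot give an honest partition. A frequency $\omega$ in the shell at scale $s$ can lie in $\theta-\theta$ for $\theta$'s belonging to more than one $\tau\in\SSS_s$: geometrically, a point outside a circle lies on two tangent lines, so when $|\omega_3|\ll\sigma^2$ two well-separated $\tau$'s contribute. What is true (and what the paper proves as its Lemma~4.2) is that only $O(1)$ such $\tau$'s occur for each $\omega$; one then applies Cauchy--Schwarz pointwise to the decomposition of $\hat g(\omega)$ before integrating. The bounded overlap you must check is therefore \emph{within} each scale, not across scales; across scales the shells are disjoint by construction.
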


(Comparing with the statement in the introduction, we use $r^2$ in place of $R$.  This makes the algebra in the proof a little simpler, and it connects with the notation in Lemma \ref{ball inflation}.)


\begin{proof}[Proof of Lemma \ref{incidenceintro}]
Suppose that $\text{supp } \hat f\subset N_{r^{-2}}(\Gamma)$.  Recall that

$$ g=\sum_{\theta \in \SSS_{r^{-1}}}|f_{\theta}|^2. $$

The Fourier transform of $|f_{\theta}|^2$ is supported on the Minkowski sum $\tilde{\theta}=\theta + (-\theta)$.  The set $\tilde{\theta}$ is itself a plank of dimensions $\sim r^{-2}\times r^{-1}\times 1$ centered at the origin.  Notice that while the original sectors $\theta$ are disjoint, the planks $\tilde \theta$ are not disjoint.  The way that they overlap plays an important role in the proof.

%

The Minkowski sum $\tilde \theta (\xi) = \theta(\xi) + (- \theta(\xi))$ is approximately equal to the following rectangular box:
$$ \tilde \theta(\xi) \approx
  \{ \omega \in \RR^3:  |\mathbf{c}(\xi) \cdot \omega| \le 1 \textrm{ and } | \mathbf{n}(\xi) \cdot \omega| \le r^{-2} \textrm{ and } | \mathbf{t}(\xi) \cdot \omega | \le r^{-1} \}, $$
  where two convex sets $A\approx B$ means that $A\subset 10 B\subset 100A$.

The overlapping of the boxes $\tilde \theta$ is best described in terms of similar rectangular boxes at smaller scales.  For any {\color{black}dyadic} $\sigma$ in the range $r^{-1} \le \sigma \le 1$, and any $\xi$ as above, we define a box $\Theta = \Theta(\sigma, \xi)$ by

\begin{equation} \label{defTheta}
 \Theta(\sigma, \xi) =\{\omega:  | \mathbf{c}(\xi)\cdot \omega |\leq \sigma^2 \textrm{ and }| \mathbf{n}(\xi)\cdot \omega|\leq r^{-2} \textrm{ and } | \mathbf{t}(\xi)\cdot \omega| \leq  r^{-1} \sigma\}.
\end{equation}

\noindent Notice that $\Theta(1, \xi)$ is equal to $\tilde \theta(\xi)$, and for $\sigma < 1$, $\Theta(\sigma, \xi) \subset \tilde \theta(\xi)$.  At the other extreme, $\Theta(r^{-1}, \xi)$ is essentially the ball of radius $r^{-2}$ centered at the origin, regardless of $\xi$.

If we intersect $\Theta(\sigma, \xi)$ with the slab  $ \{ (1/2) \sigma^2 \le \omega_3 \le \sigma^2 \}$, then it lies in the $r^{-2}$-neighborhood of the light cone.  Let $\Gamma(\sigma^2)$ denote the part of the light cone where $(1/2) \sigma^2 \le \omega_3 \le \sigma^2$.  Each $\Theta(\sigma, \xi) \cap \{ (1/2) \sigma^2 \le \omega_3 \le \sigma^2 \}$ is a sector of $N_{r^{-2}}(\Gamma(\sigma^2))$, just as $\theta$ is a sector of $N_{r^{-2}}(\Gamma)$.  The number of such sectors needed to cover $N_{r^{-2}}(\Gamma(\sigma^2))$ is $\sim \sigma r$.  If $| \xi - \xi'| > \sigma^{-1} r^{-1}$, then $\Theta(\sigma, \xi) \cap \Theta (\sigma, \xi') \cap  \{ (1/2) \sigma^2 \le \omega_3 \le \sigma^2 \}$ is empty.  Conversely, if $| \xi - \xi' | < \sigma^{-1} r^{-1}$ then $\Theta(\sigma, \xi) \cap  \{ (1/2) \sigma^2 \le \omega_3 \le \sigma^2 \}$ is comparable to $\Theta (\sigma, \xi') \cap  \{ (1/2) \sigma^2 \le \omega_3 \le \sigma^2 \}$.  By symmetry, the same holds when we intersect with $\{ - \sigma^2 \le \omega_3 \le - (1/2) \sigma^2 \}$ at the other side of the light cone.  Now by convexity, we conclude that if $| \xi - \xi'| \le \sigma^{-1} r^{-1}$, then $\Theta(\sigma, \xi) \subset 2 \Theta(\sigma, \xi')$.

For each dyadic $\sigma$ in the range $r^{-1} \le \sigma \le 1$, let $\CP_\sigma$ be a set of $\sim \sigma r$ planks of the form $\Theta(\sigma, \xi)$ with the directions $\xi$ evenly spaced in the circle.  (The letters $\CP$ stand for centered plank.) The size of $\CP_\sigma$ is chosen so that for any $\Theta(\sigma, \xi)$, we can choose $\Theta(\sigma, \xi') \in \CP_\sigma$ so that $\Theta(\sigma, \xi) \subset 2 \Theta(\sigma, \xi')$.  We define $\CP$ as a union over dyadic scales: $ \CP = \cup_{r^{-1} \le \sigma \le 1} \CP_\sigma. $  Since $\Theta(1, \xi)$ is the same as $\tilde \theta(\xi)$, $\CP_1 = \SSS_{r^{-1}}$.  On the other hand, $\CP_{r^{-1}}$ is a set with one element, which is essentially the ball of radius $r^{-2}$ around the origin.

For a given $\theta(\xi)$ and a given scale $\sigma$, there are $\sim 1$ ${\color{black}\Theta =} \Theta(\sigma, \xi') \in \CP_\sigma$ with $\Theta \subset 2 \tilde \theta$.  To see this, note on the one hand that $\Theta(\sigma, \xi) \subset \tilde \theta(\xi)$, and we can choose $\Theta(\sigma, \xi') \in \CP_\sigma$ so that $\Theta(\sigma, \xi') \subset 2 \Theta (\sigma, {\color{black}\xi})$.  On the other hand, $\tilde \theta (\xi) \cap N_{r^{-2}}(\Gamma(\sigma^2))$ is essentially equal to the sector $\Theta(\sigma, \xi) \cap \{ (1/2) \sigma^2 \le \omega_3 \le \sigma^2 \}$, and so $2 \tilde \theta (\xi)$ contains $\Theta (\sigma, \xi')$ only if $| \xi - \xi' | \lesssim \sigma^{-1} r^{-1}$.

In our proof, $r$ remains fixed but we have to consider various scales $\sigma$.  To simplify notation, we abbreviate $\SSS_{r^{-1}}$ as $\SSS$.  Now for each scale $\sigma$, for each $\theta = \theta(\xi) \in \SSS = \SSS_{r^{-1}}$, we associate one $\Theta = \Theta(\sigma, \xi') \in \CP_\sigma$ with $|\xi' - \xi| \le \sigma^{-1} r^{-1}$.  For each $\Theta \in \CP_\sigma$, we let $\SSS_\Theta$ be the set of all $\theta \in \SSS$ which are associated with $\Theta$.  So for each $\sigma$, $\SSS= \bigsqcup_{\Theta \in \CP_\sigma} \SSS_\Theta$.  If $\theta \in \SSS_\Theta$, then $\Theta \subset 2 \tilde \theta$.

Let $\Omega = \cup_{\theta \in \SSS} \tilde \theta \sim \cup_{\Theta \in \CP_1} \Theta$.  Since $(|f_\theta|^2)^\wedge$ is supported on $\tilde \theta$, it follows that $\hat g$ is supported on $\Omega$.   We break $\Omega$ into pieces associated with different scales $\sigma$ as follows.  We define $\Omega_{\le \sigma} = \cup_{\Theta \in \CP_\sigma} \Theta$.  Then we define $\Omega_\sigma = \Omega_{\le \sigma} \setminus \Omega_{\le \sigma/2}$ if $\sigma > r^{-1}$, and we define $\Omega_{r^{-1}} = \Omega_{\le r^{-1}}$, so that

$$\Omega = \bigsqcup_{r^{-1} \le \sigma \le 1} \Omega_\sigma. $$

\noindent (Here $\bigsqcup$ denotes a disjoint union, and the union is over dyadic $\sigma$.)

Now if $\omega \in \Omega_\sigma$, we bound $|\hat g(\omega)|$ as follows:

\begin{equation}\label{decgomega} | \hat g(\omega) | =  |\sum_{\theta \in \SSS} ( |f_\theta|^2)^\wedge (\omega)| \le \sum_{\Theta \in \CP_\sigma} | \sum_{\theta \in \SSS_\Theta}  ( |f_\theta|^2)^\wedge (\omega)  | . \end{equation}

\begin{lemma} \label{lemTheta1} If $\Theta \in \CP_\sigma$ makes a non-zero contribution to the right-hand side of (\ref{decgomega}) for an $\omega\in \Omega_{\sigma}$, then $\omega \in 4 \Theta$.
\end{lemma}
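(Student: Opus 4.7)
The plan is to reduce the conclusion $\omega \in 4\Theta$ to explicit bounds on $|\mathbf{c}(\xi') \cdot \omega|$, $|\mathbf{n}(\xi') \cdot \omega|$, and $|\mathbf{t}(\xi') \cdot \omega|$, then to establish these bounds by a calculation in the orthogonal frame associated with an auxiliary direction $\xi^\#$. From the hypothesis that $\Theta = \Theta(\sigma, \xi')$ contributes non-trivially at $\omega$, at least one summand $(|f_\theta|^2)^\wedge(\omega)$ with $\theta = \theta(\xi) \in \SSS_\Theta$ is non-zero, so $\omega \in \tilde\theta(\xi)$; in particular $|\mathbf{n}(\xi) \cdot \omega| \le r^{-2}$ and $|\mathbf{t}(\xi) \cdot \omega| \le r^{-1}$, while by the very definition of $\SSS_\Theta$ we have $|\xi - \xi'| \le \sigma^{-1} r^{-1}$. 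From $\omega \in \Omega_\sigma \subset \Omega_{\le \sigma}$ we also pick a direction $\xi^\#$ in $\CP_\sigma$ with $\omega \in \Theta(\sigma, \xi^\#)$, which gives coefficients $C^\#, N^\#, T^\#$ of $\omega$ in the orthogonal basis $\{\mathbf{c}(\xi^\#), \mathbf{n}(\xi^\#), \mathbf{t}(\xi^\#)\}$ satisfying $|C^\#| \le \sigma^2$, $|N^\#| \le r^{-2}$, $|T^\#| \le \sigma r^{-1}$.

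The core step is the identity (for any two angles $\eta_1, \eta_2$ on the unit circle, $\delta$ being the angle between them)
\begin{equation*}
  \mathbf{n}(\eta_1) \cdot \omega = \tfrac{\cos\delta - 1}{2}(\mathbf{c}(\eta_2) \cdot \omega) + \tfrac{\cos\delta + 1}{2}(\mathbf{n}(\eta_2) \cdot \omega) + \sin\delta\,(\mathbf{t}(\eta_2) \cdot \omega),
\end{equation*}
applied first with $(\eta_1, \eta_2) = (\xi, \xi^\#)$ and later with $(\eta_1, \eta_2) = (\xi', \xi^\#)$, together with companion identities for $\mathbf{c}$ and $\mathbf{t}$. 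Feeding $|\mathbf{n}(\xi) \cdot \omega| \le r^{-2}$ and the bounds on $C^\#, N^\#, T^\#$ into the first instance yields $(1 - \cos\alpha)|C^\#| \lesssim r^{-2} + \sigma r^{-1} |\sin\alpha|$ where $\alpha = \phi - \phi^\#$. A case analysis on $\alpha$ (small versus close to $\pm\pi$) combined with the analogous identities for $\mathbf{c}(\xi'), \mathbf{n}(\xi'), \mathbf{t}(\xi')$ (where $\beta = \phi^\# - \phi' = -\alpha + O(\sigma^{-1} r^{-1})$) should deliver $|\mathbf{c}(\xi') \cdot \omega| \le 4\sigma^2$, $|\mathbf{n}(\xi') \cdot \omega| \le 4 r^{-2}$, and $|\mathbf{t}(\xi') \cdot \omega| \le 4\sigma r^{-1}$, i.e.\ $\omega \in 4\Theta$.

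The main obstacle is the case analysis combined with tracking absolute constants. In the generic small-$\alpha$ regime the Taylor expansion $1 - \cos\alpha \approx \alpha^2/2$ reduces everything to a straightforward triangle-inequality argument. In the antipodal regime $\alpha \approx \pm\pi$, however, the displayed identity forces $|C^\#| \lesssim r^{-2}$, meaning $\omega$ effectively sits in a thinner plank; here I would verify directly from the $\xi^\#$-basis expansion, using $\beta \approx \pm\pi$ and the smallness of $|C^\#|$, that the three dot products with $\mathbf{c}(\xi'), \mathbf{n}(\xi'), \mathbf{t}(\xi')$ each remain within the constant $4$ of their permitted ranges. At the smallest dyadic scale $\sigma = r^{-1}$, every plank in $\CP_\sigma$ degenerates to essentially the ball of radius $\sim r^{-2}$ about the origin, so $\omega \in 4\Theta$ is automatic. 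Ensuring that the constants from Taylor expansion, from the comparability of nearby planks (already proven above the lemma), and from the discrete direction spacings in $\CP_\sigma$ all combine to stay $\le 4$ is the delicate bookkeeping the full proof must carry out; the condition $\omega \notin \Omega_{\le \sigma/2}$ is the safety net that prevents pathological near-origin configurations from spoiling the antipodal case.
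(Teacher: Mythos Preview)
Your computational approach via the rotation identities for $\mathbf{c},\mathbf{n},\mathbf{t}$ can be made to work, but it is considerably more convoluted than the paper's argument and two pieces of your framing are off. The paper never uses the condition $\omega\notin\Omega_{\le\sigma/2}$: it only uses $\omega\in\Omega_{\le\sigma}$, and it proves the cleaner intermediate claim $\tilde\theta(\xi)\cap\Omega_{\le\sigma}\subset 2\Theta(\sigma,\xi)$ (after which $\Theta(\sigma,\xi)\subset 2\Theta(\sigma,\xi')$ finishes). This is done geometrically by slicing at height $\omega_3=h$: there $\Omega_{\le\sigma}$ is the annulus $\{h^2\le\omega_1^2+\omega_2^2\le h^2+r^{-2}\sigma^2\}$, $\tilde\theta(\xi)$ is a $r^{-1}\times r^{-2}$ rectangle tangent to the inner circle at $h\xi$, and their intersection is trapped in the $\sigma r^{-1}\times r^{-2}$ sub-rectangle, i.e.\ in $2\Theta(\sigma,\xi)$. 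No case split, no auxiliary $\xi^\#$, no antipodal discussion.

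Your dichotomy ``$\alpha$ small versus $\alpha$ close to $\pm\pi$'' is also a false dichotomy: when $|C^\#|$ is small, $\alpha$ can be anything. What your displayed constraint on $\mathbf{n}(\xi)\cdot\omega$ really gives (uniformly in $\alpha$, using $1-\cos\alpha\ge\tfrac12\sin^2\alpha$) is the single inequality $|\sin\alpha|\,|C^\#|\lesssim\sigma r^{-1}$. With that and the automatic bound $|\mathbf{c}(\xi)\cdot\omega|\le |C^\#|+r^{-2}+\sigma r^{-1}\lesssim\sigma^2$, you obtain $\omega\in C\Theta(\sigma,\xi)$ and then transfer to $\xi'$ by the short rotation $|\gamma|\le\sigma^{-1}r^{-1}$, exactly mirroring the paper's two-step route. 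Going directly from the $\xi^\#$-frame to the $\xi'$-frame, as you propose, does work but requires you to feed $|\sin\alpha|\,|C^\#|\lesssim\sigma r^{-1}$ back into the difference $\beta-\alpha=O(\sigma^{-1}r^{-1})$; the naive term-by-term estimate loses a factor $\sigma r$ on $|\mathbf{n}(\xi')\cdot\omega|$. So drop the case split and the appeal to $\Omega_{\le\sigma/2}$, and either pass through $\Theta(\sigma,\xi)$ or adopt the paper's annulus picture.
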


\begin{proof} Suppose that $ \sum_{\theta \in \SSS_\Theta}  ( |f_\theta|^2)^\wedge (\omega) $ is non-zero.  Then we must have $\omega \in \tilde \theta$ for some $\theta \in \SSS_\Theta$.
Suppose $\theta = \theta(\xi)$ and $\Theta = \Theta(\sigma, \xi')$.  Since $\theta \in \SSS_{\Theta}$, we know that $| \xi - \xi'| \le \sigma^{-1} r^{-1}$ and so $\Theta(\sigma, \xi) \subset 2 \Theta$.

We claim that $\tilde \theta \cap \Omega_{\le \sigma}$ is contained in $2 \Theta(\sigma, \xi)$.  This will finish the proof, because $\omega \in \tilde \theta \cap \Omega_{\le \sigma} \subset 2 \Theta(\sigma, \xi) \subset 4 \Theta (\sigma, \xi')$.

To check the claim, we have to understand the geometry of the set $\Omega_{\le \sigma}$.   To picture the set $\Omega_{\le \sigma}$, we found it helpful to consider the intersection of $\Theta(\sigma, \xi)$ with the plane $\omega_3 = h$.  We assume $|h| \le \sigma^2$ -- otherwise the intersection is empty.  The intersection $\Theta(\sigma, \xi) \cap \{ \omega_3 = h \}$ is a rectangle with dimensions $r^{-1} \sigma \times {\color{black}\sqrt{2}} r^{-2}$, and the long side of the rectangle is tangent to the circle of radius $h$ around the origin at the point $h \xi$.  Therefore, $\Theta(\sigma, \xi) \cap \{ \omega_3 = h\}$ is contained in the annulus $ \{h^2 \le \omega_1^2 + \omega_2^2 \le h^2 + r^{-2} \sigma^2\}$.  If we rotate $\xi$, the rectangle $\Theta(\sigma, \xi) \cap \{ \omega_3 = h\}$ rotates also, and the union of these rotated rectangles over all $\xi$ is equal to this annulus.  Therefore, if $h \le \sigma^2$, $\Omega_{\le \sigma} \cap \{ \omega_3 = h \}$ is approximately equal to this annulus:

\begin{equation}\label{Omega<sig}  \Omega_{\le \sigma} \cap \{ \omega_3 = h \} \sim  \{ \omega: \omega_3 = h, h^2 \le \omega_1^2 + \omega_2^2 \le h^2 + r^{-2} \sigma^2\}. \end{equation}


On the other hand, $\tilde \theta(\xi) \cap \{ \omega_3 = h \} = \Theta(1, \xi) \cap \{\omega_3 = h \}$ is a rectangle of dimensions $\sim r^{-1} \times r^{-2}$ which is tangent to the circle of radius $h$ at $h \xi$.  The intersection of  this rectangle with the annulus above is contained in a shorter rectangle with the same center and with dimensions $\sigma r^{-1} \times r^{-2}$, which in turn is contained in $2 \Theta(\sigma, \xi) \cap \{ \omega_3 = h \}$.  Since this holds for every $h$ with $|h| \le \sigma^2$, we see that $\tilde \theta(\xi) \cap \Omega_{\le \sigma} \subset 2 \Theta(\sigma, \xi)$ as claimed.
\end{proof}

Using Lemma \ref{lemTheta1}, we can rewrite inequality (\ref{decgomega}): if $\omega \in \Omega_{\sigma}$, then

\begin{equation} \label{decgomega2} | \hat g(\omega) |  \le \sum_{\Theta \in \CP_\sigma, \omega \in 4 \Theta} | \sum_{\theta \in \SSS_\Theta}  ( |f_\theta|^2)^\wedge (\omega)  | . \end{equation}

\begin{lemma} \label{lemTheta2} For any $\omega \in \Omega_\sigma$, the number of $\Theta \in \CP_\sigma$ so that $\omega \in 4 \Theta$ is bounded by a constant $C$.
\end{lemma}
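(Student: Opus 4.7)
The plan is to parametrize the directions $\xi'$ indexing $\CP_\sigma$ and the point $\omega$ in polar coordinates, reduce the three conditions defining $4\Theta(\sigma,\xi')$ to constraints on a single angular variable, and then show that these constraints cut out an angular window of total measure $O(\sigma^{-1}r^{-1})$.  Since by construction the angles of the $\xi'$'s are spaced by $\sim \sigma^{-1}r^{-1}$ around the circle, only $O(1)$ of them can lie in such a window, which is exactly the claim.

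I would first write $\xi' = (\cos\theta',\sin\theta',1)$ and $(\omega_1,\omega_2) = \rho(\cos\phi,\sin\phi)$, and set $\alpha = \theta'-\phi$.  Expanding the three inequalities in~(\ref{defTheta}) (with $4\sigma^2,4r^{-2},4r^{-1}\sigma$ on the right) using the identities $\mathbf{c}(\xi')\cdot\omega = \rho\cos\alpha + \omega_3$, $\mathbf{n}(\xi')\cdot\omega = \rho\cos\alpha - \omega_3$, and $\mathbf{t}(\xi')\cdot\omega = -\rho\sin\alpha$ yields
\begin{align*}
|\rho\cos\alpha + \omega_3| &\le 4\sigma^2,\\
|\rho\cos\alpha - \omega_3| &\le 4r^{-2},\\
|\rho\sin\alpha| &\le 4r^{-1}\sigma,
\end{align*}
so the task reduces to bounding the measure of the set of admissible $\alpha$.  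From (\ref{Omega<sig}) we have $|\omega_3|\lesssim \sigma^2$ and $\omega_3^2 \le \rho^2 \lesssim \omega_3^2 + r^{-2}\sigma^2$; the assumption $\omega\in\Omega_\sigma$ (i.e.\ $\omega\notin\Omega_{\le\sigma/2}$) additionally forces \emph{either} $|\omega_3|\gtrsim \sigma^2$ \emph{or} $\rho^2 - \omega_3^2 \gtrsim r^{-2}\sigma^2$.

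The heart of the argument is a Taylor expansion of $\cos$ around the critical angle $\alpha_0 := \arccos(\omega_3/\rho)$, at which $|\sin\alpha_0| = \sqrt{\rho^2-\omega_3^2}/\rho$.  In the first regime $\rho^2 - \omega_3^2 \gtrsim r^{-2}\sigma^2$, one has $\rho|\sin\alpha_0|\gtrsim r^{-1}\sigma$, so the middle inequality $|\rho\cos\alpha - \omega_3|\le 4r^{-2}$ forces $|\alpha - (\pm\alpha_0)|\lesssim r^{-2}/(\rho|\sin\alpha_0|)\lesssim r^{-1}\sigma^{-1}$, producing two short arcs of total length $O(r^{-1}\sigma^{-1})$.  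In the other regime $|\omega_3|\gtrsim \sigma^2$ one has $\rho\ge |\omega_3|\gtrsim \sigma^2$ and $\alpha_0\approx 0$ or $\pi$; the third inequality alone then gives $|\sin\alpha|\le 4r^{-1}\sigma/\rho\lesssim r^{-1}\sigma^{-1}$, pinning $\alpha$ to an $O(r^{-1}\sigma^{-1})$-neighborhood of $\alpha_0$.

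The main obstacle is not technical but rather bookkeeping: one has to verify that the two regimes together cover every $\omega\in\Omega_\sigma$, and keep track of the fact that in the first regime there are two branches $\alpha\approx\pm\alpha_0$ (each contributing an interval of length $O(r^{-1}\sigma^{-1})$) while in the second there is only one.  Dropping either the second or the third inequality prematurely would miss one of the regimes and give a weaker bound, so all three inequalities must be carried through the case analysis.
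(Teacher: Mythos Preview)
Your argument is correct and, once unwound, is essentially the same computation as the paper's, just framed dually.  The paper fixes a plank $4\Theta$ and bounds the arclength of $4\Theta\cap C_{h,\rho}$ on the circle through $\omega$, then invokes rotational symmetry to turn this average bound into a pointwise one; you instead fix $\omega$ and directly bound the angular window of admissible directions $\xi'$.  Because everything depends only on the relative angle $\alpha=\theta'-\phi$, the two computations coincide: your use of the $\mathbf{n}$--inequality in the first regime matches the paper's ``width $r^{-2}$ divided by the angle of incidence'' in the case $|h|\le\sigma^2/4$, and your use of the $\mathbf{t}$--inequality in the second regime matches the paper's ``arclength $\sim\sigma r^{-1}$'' in the case $|h|\ge\sigma^2/4$.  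Your framing is slightly more direct, since it avoids the symmetry/averaging step.

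One small caveat: the linear Taylor expansion around $\alpha_0$ in your first regime is borderline when $\rho\sim\sigma^2$ and $|\sin\alpha_0|\sim r^{-1}\sigma^{-1}$, as the quadratic term $\tfrac12(\alpha-\alpha_0)^2$ is then comparable to the linear one.  In that sub-case one should instead note that $|\cos\alpha-\cos\alpha_0|\le 4r^{-2}/\rho$ with $\rho\sim\sigma^2$ still forces $|\alpha-(\pm\alpha_0)|\lesssim\sqrt{r^{-2}/\rho}\sim r^{-1}\sigma^{-1}$, so the conclusion is unchanged.
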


\begin{proof} Building on the description of $\Omega_{\le \sigma}$ in (\ref{Omega<sig}) above, we see that if $|h| \le  \sigma^2/4$, then $\Omega_\sigma \cap \{ \omega_3 = h \}$ is approximately given by
		
\begin{equation}\label{annulus'} \{ h^2 + (1/4) r^{-2} \sigma^2 \le \omega_1^2 + \omega_2^2 \le h^2 + r^{-2} \sigma^2 \}. \end{equation}

If $ \sigma^2/4 \le |h| \le \sigma^2$, then $\Omega_\sigma \cap \{ \omega_3 = h \}$ is approximately given by

\begin{equation}\label{annulus''}  \{ h^2 \le \omega_1^2 + \omega_2^2 \le h^2 + r^{-2} \sigma^2 \}. \end{equation}

Let $C_{h, \rho}$ be the circle defined by $\omega_3 = h$ and $\omega_1^2 + \omega_2^2 = \rho^2$ with $|h| \le \sigma^2$ and $\rho$ chosen {\color{black}such that $C_{h, \rho}$ lies} in (\ref{annulus'}) or (\ref{annulus''}).  These circles cover $\Omega_\sigma$.
For any $\xi$, we will compute in the next two paragraphs that the fraction of {\color{black}$C_{h, \rho}$} contained in $4 \Theta(\sigma, \xi)$ is $\lesssim \sigma^{-1} r^{-1}$.  There are $\sim \sigma r$ different $\Theta(\sigma, \xi) \subset \CP_\sigma$.  By circular symmetry, each frequency $\omega \in C_{h, \rho}$ lies in $4 \Theta$ for approximately the same number of $\Theta \in \CP_\sigma$, and so each frequency $\omega$ lies in $4 \Theta$ for $\le C$ different $\Theta \in \CP_\sigma$.

We first do the case $|h| \le  \sigma^2/4$.  Recall that $\Theta(\sigma, \xi) \cap \{ \omega_3 =h \}$ is a rectangle with dimensions $r^{-1} \sigma \times r^{-2}$ which is tangent to the circle of radius $|h|$.  Suppose for now that $r^{-1} \sigma \le |h|$.  If $A, B$ are the two endpoints of this rectangle and $O$ is the origin, then the angle $AOB$ is approximately $r^{-1} \sigma / |h|$.  The angle between the rectangle $\Theta \cap \{\omega_3=h\}$ and the circle $C_{h,\rho}$ is approximately equal to the angle $AOB$.  Therefore, the arc length of $4 \Theta \cap C_{h, \rho}$ is bounded by

$$ \textrm{Length }(4 \Theta \cap C_{h, \rho} ) \lesssim r^{-1} \sigma^{-1} |h|. $$

\noindent Since the length of $C_{h, \rho}$ is $2\pi \rho \sim |h|$, the fraction of $C_{h, \rho}$ contained in $4 \Theta$ is $\lesssim r^{-1} \sigma^{-1}$ as desired.

If $|h| {\color{black}<} r^{-1} {\color{black}\sigma}$, then the angle $AOB$ is $\sim 1$, and the length of $4 \Theta \cap C_{h, \rho}$ is approximately $r^{-2}$.  In this case the length of $C_{h, \rho}$ is $2\pi \rho \sim r^{-1} \sigma$, and so the fraction of $C_{h, \rho}$ covered by $4 \Theta$ is still $\lesssim r^{-1} \sigma^{-1}$.
	
Finally, suppose that $\sigma^2/4\leq |h| \leq \sigma^2$.  In this case $4 \Theta 	\cap C_{h, \rho}$ has arc length $\sim \sigma r^{-1}$ (the long side of the rectangle $\Theta \cap \{ \omega_3 = h \}$.  Since the length of $C_{h, \rho}$ is  $2\pi\rho \sim |h| \sim \sigma^2$, the fraction of $C_{h, \rho}$ covered by $4 \Theta$ is again $\lesssim \sigma^{-1} r^{-1}$.
\end{proof}

Remark.  If $\omega \in \Omega_\sigma$ and $|\omega_3|$ is much smaller than $\sigma^2$, then $\omega$ lies in two rather different $\Theta \in \CP_\sigma$, and maybe also on other $\Theta$ neighboring these two.  This is because a point outside a circle lies on two lines tangent to the circle.

\vskip5pt

Applying Cauchy--Schwarz to (\ref{decgomega2}) and using Lemma \ref{lemTheta2} we see that if $\omega \in \Omega_\sigma$, then

\begin{equation} \label{decgomega3} | \hat g(\omega) |^2  \lesssim \sum_{\Theta \in \CP_\sigma, \omega \in 4 \Theta} | \sum_{\theta \in \SSS_\Theta}  ( |f_\theta|^2)^\wedge (\omega)  |^2 . \end{equation}

We let $\eta_\Theta$ be a smooth function which is {\color{black}$\geq 1$} on $4 \Theta$ and decays {\color{black}rapidly} outside $4 \Theta$.  Summing over all dyadic $\sigma$, we see that for every frequency $\omega$,

$$ | \hat g(\omega)|^2 \lesssim \sum_{\Theta \in \CP} \left|  \eta_\Theta(\omega) \sum_{\theta \in \SSS_\Theta}  ( |f_\theta|^2)^\wedge (\omega)\right|^2. $$

Now we integrate and use Plancherel, giving

$$ \int |g|^2 \lesssim \sum_{\Theta \in \CP} \int  |  \eta_\Theta^\vee * \sum_{\theta \in \SSS_{{\color{black}\Theta}}}  |f_\theta|^2 |^2. $$

Now we can choose $\eta_\Theta$ so that $| \eta_\Theta^\vee (x)| \lesssim |\Theta^*|^{-1}$ for all $x$, and $\eta_\Theta^\vee$ is supported on $\Theta^*$.  Therefore, it is natural to break up the right integral into translated copies of $\Theta^*$:

$$ \int |g|^2 \lesssim \sum_{\Theta \in \CP}  \sum_{U \pp \Theta^*} \int_U  |  \eta_\Theta^\vee * \sum_{\theta \in \SSS_{{\color{black}\Theta}}}  |f_\theta|^2 |^2. $$

In the last integral, for each $x \in U$, we have

$$  |  \eta_\Theta^\vee * \sum_{\theta \in \SSS_\Theta} |f_\theta|^2(x) | \lesssim |U|^{-1} \int \eta_U \sum_{\theta \in \SSS_{{\color{black}\Theta}}}  |f_\theta|^2 , $$

\noindent where $\eta_U(z)=|\Theta^*| \cdot \max_{y\in z+ \Theta^* - U} |\eta_{\Theta}^{\vee}(y)|$ is a bump function with $\|\eta_U\|_{\infty} \sim 1$ supported on $2U$. We remark that the arguments presented here exploit the locally constant property. We shall discuss another variant of this property in Lemma~\ref{lem: locally constant}.

Therefore,

$$ \int |g|^2 \lesssim \sum_{\Theta \in \CP}  \sum_{U \pp \Theta^*}  |U|^{-1} \left(  \int \eta_U \sum_{\theta \in \SSS_\Theta}  |f_\theta|^2 \right)^2.  $$

We associate $\Theta(\sigma, \xi)$ to $\tau(\sigma^{-1} r^{-1}, \xi)$.  This gives a bijection from $\CP_\sigma$ to $\SSS_s$ with $s = \sigma^{-1} r^{-1}$.  If $\Theta(\sigma, \xi) \subset 2 \tilde\theta(\xi')$, then we saw above that $| \xi - \xi'| \lesssim \sigma^{-1} r^{-1}$, and so $\theta(\xi') \subset 4 \tau(\sigma^{-1} r^{-1}, \xi)$.   In particular, if $\theta \in \SSS_\Theta$, then $\theta \subset 4 \tau$.  Also $\Theta(\sigma, \xi)^*$ is comparable to $U_{\tau(\sigma^{-1} r^{-1}, \xi), r^2}$, which we can see by comparing the definition of $U_{\tau,r^2}$ in (\ref{defUtau}) with the definition of $\Theta$ in (\ref{defTheta}).   Rewriting the last inequality in terms of $\tau \in \SSS_s$ instead of $\Theta \in \CP_\sigma$, we get

$$ \int |g|^2 \lesssim \sum_{r^{-1} \le s \le 1} \sum_{\tau \in \SSS_s} \sum_{U \pp U_{\tau,r^2}} |U|^{-1}  \left(\int  \eta_U \sum_{\theta \subset \tau}  |f_\theta|^2 \right)^2. $$


{\color{black}By the definition of $S_U f$,}
$$ \sum_{U\pp U_{\tau, r^2}} (\int \eta_U \sum_{\theta \subset \tau} | f_\theta|^2)^2  \lesssim \sum_{U\pp U_{\tau,r^2} } \| S_{U} f \|_{L^2}^4.$$

Plugging this in, we get

$$ \int |g|^2 \lesssim
\underset{r^{-1}\leq s \leq 1}{\sum} ~ \underset{d(\tau)=s}{\sum}  ~ \underset{U\pp U_{\tau,r^2}}{\sum} |U|^{-1} \|S_Uf\|_{L^2}^4.
 $$

This proves Lemma \ref{incidenceintro} by taking $r = R^{\frac{1}{2}}$. \end{proof}

We use this Kakeya-type estimate as well as local orthogonality to prove Lemma \ref{ball inflation}.  First we recall local orthogonality, and then we recall the statement of Lemma \ref{ball inflation}.

Local orthogonality is written using a weight functions localized a given ball.  For a ball $B_R$ of radius $R$, define the weight

$$w_{B_R, E} (x) = (1+\frac{\mathrm{dist} (x, B_R)}{R})^{-E}.$$

\begin{lemma}[Local $L^2$ orthogonality lemma, essentially Proposition 6.1 in \cite{BD2}]\label{localorthogonality} Suppose that $f \in L^2(\RR^n)$.  Suppose that $f = \sum_\theta f_\theta$, where  $\mathrm{supp} {\hat f_{\theta}} \subset \theta$ in the Fourier space.  In this statement the sets $\theta$ are arbitrary.  Suppose that $r > 0$ and that each $\xi \in \RR^n$ lies in $N_{r^{-1}(\theta)}$ for at most $M$ different sets $\theta$ appearing in the sum.  Then for any $E>0$,
    $$\|f\|_{L^2 (B_r)}^2 \lesssim_{M, E} \sum_{\theta\in\mathcal{I}} \|f_{\theta}\|_{L^2 (w_{B_r, E})}^2.$$
\end{lemma}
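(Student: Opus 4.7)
The plan is to localize $f$ to $B_r$ via multiplication by a Schwartz bump function whose Fourier support is small enough to match the thickening allowed in the hypothesis, and then exploit the assumed bounded multiplicity in the frequency domain through a Cauchy--Schwarz argument on the Fourier side.

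More precisely, I would first fix a Schwartz function $\phi$ on $\RR^n$ satisfying $\phi \ge 1$ on the unit ball, $|\phi(x)|^2 \lesssim_E (1+|x|)^{-E}$, and $\widehat{\phi}$ supported in the unit ball. Setting $\phi_{B_r}(x) = \phi\bigl((x-x_0)/r\bigr)$ with $x_0$ the center of $B_r$ gives $|\phi_{B_r}|^2 \lesssim_E w_{B_r,E}$ and $\widehat{\phi_{B_r}}$ supported in the ball of radius $r^{-1}$ centered at the origin. Then
\[
\|f\|_{L^2(B_r)}^2 \;\le\; \|\phi_{B_r} f\|_{L^2(\RR^n)}^2
\]
because $|\phi_{B_r}|\gtrsim 1$ on $B_r$.

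Next I would write $\phi_{B_r} f = \sum_\theta \phi_{B_r} f_\theta$ and note that
\[
\operatorname{supp} \widehat{\phi_{B_r} f_\theta} \;=\; \operatorname{supp}\bigl(\widehat{\phi_{B_r}} * \widehat{f_\theta}\bigr) \;\subset\; \theta + B_{r^{-1}}(0) \;\subset\; N_{r^{-1}}(\theta).
\]
By the bounded-multiplicity hypothesis, at every frequency $\xi$ at most $M$ of the functions $\widehat{\phi_{B_r} f_\theta}(\xi)$ are non-zero, so pointwise Cauchy--Schwarz in $\xi$ gives
\[
\Bigl|\sum_\theta \widehat{\phi_{B_r} f_\theta}(\xi)\Bigr|^2 \;\le\; M \sum_\theta \bigl|\widehat{\phi_{B_r} f_\theta}(\xi)\bigr|^2.
\]
Integrating and applying Plancherel yields $\|\phi_{B_r} f\|_{L^2}^2 \le M \sum_\theta \|\phi_{B_r} f_\theta\|_{L^2}^2$, and each summand is bounded by $\|f_\theta\|_{L^2(w_{B_r,E})}^2$ via the pointwise domination $|\phi_{B_r}|^2 \lesssim_E w_{B_r,E}$, completing the proof.

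There is no serious obstacle: the only subtlety is ensuring the two requirements on the bump function are compatible, namely that $\widehat{\phi_{B_r}}$ sits inside $B_{r^{-1}}(0)$ (so that the Fourier support of $\phi_{B_r} f_\theta$ only spills into $N_{r^{-1}}(\theta)$ and preserves the $M$-fold overlap hypothesis) while simultaneously $|\phi_{B_r}|^2$ decays at the rate $(1+\operatorname{dist}(\cdot,B_r)/r)^{-E}$. This is arranged trivially by picking $\phi$ Schwartz with compactly supported Fourier transform and then rescaling by $r$, which is a standard uncertainty-principle construction.
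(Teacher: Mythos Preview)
Your proposal is correct and follows exactly the paper's own argument: choose a bump $\psi_{B_r}$ with $|\psi_{B_r}|\gtrsim 1$ on $B_r$, polynomial decay $|\psi_{B_r}|\lesssim_E w_{B_r,E}^{1/2}$, and $\widehat{\psi_{B_r}}$ supported in $B(0,r^{-1})$, then pass to the Fourier side where the convolution $\widehat{f_\theta}*\widehat{\psi_{B_r}}$ lands in $N_{r^{-1}}(\theta)$ and the $M$-fold overlap gives almost-orthogonality via Cauchy--Schwarz. The only detail you spelled out beyond the paper's sketch is the explicit pointwise Cauchy--Schwarz on the Fourier side, which is precisely the intended completion.
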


To prove Lemma~\ref{localorthogonality}, it suffices to take a function $\psi_{B_r}$ such that $ \psi_{B_r}\gtrsim 1 $ on $B_r$,  $|\psi_{B_r}(x)|\leq C_E (1+r^{-1}\text{dist}(x, B_r))^{-E/2}$, and $\hat{\psi}_{B_r}\subset B(0, r^{-1})$. Then $\|f\|_{L^2(B_r)}\lesssim \|f\psi_{B_r} \|_{L^2}$. We apply Plancherel's theorem  and observe that  the support of $\widehat{f}_{\theta}\ast \widehat{\psi}_{B_r}$  lies in $N_{r^{-1}}(\theta)$.

Now we turn to the proof of Lemma \ref{ball inflation}.  Unwinding the definition of $S(r,R)$, Lemma \ref{ball inflation} says

\begin{lemma*} 
{\color{black}If} $\hat f$ is supported on $N_{r^{-2}}(\Gamma)$ {\color{black}and $r_1 \in [r, r^2]$}, then 
\begin{equation} \label{eqballinfl}	\sum_{{\color{black}B_{r_1}}\subset {\color{black}\mathbb{R}^3}} |{\color{black}B_{r_1}}|^{-1} \|{\color{black}S_{B_{r_1}} f}\|_{L^2({\color{black}B_{r_1}})}^4
{\color{black}\lesssim} \underset{ r^{-1} \leq s \leq 1 }{\sum}~~\sum_{d(\tau)=s}~~ \underset{U\pp U_{\tau, r^2}}{\sum} |U|^{-1} \|{\color{black}S_U f}\|_{L^2}^4.  \end{equation}
\end{lemma*}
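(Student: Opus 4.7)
The plan is to reduce the left-hand side to $\int_{\mathbb{R}^3} g^2$, where $g = \sum_{\theta \in \SSS_{r^{-1}}} |f_\theta|^2$ is the squared square function at the finest scale appearing in Lemma~\ref{incidenceintro}, and then invoke that lemma (with $R = r^2$) to bound $\int g^2$ by the right-hand side. This splits the argument into two steps: (i) pass from the coarse square function $|S_{B_{r_1}} f|^2 = \sum_{\theta' \in \SSS_{r_1^{-1/2}}} |f_{\theta'}|^2 \big|_{B_{r_1}}$ to the fine one $g$; (ii) apply the Kakeya-type estimate.

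For step (i), the key point is that when $r_1 \ge r$ the $r^{-1}$-separated sectors $\theta \in \SSS_{r^{-1}}$ sitting inside any $\theta' \in \SSS_{r_1^{-1/2}}$ can be resolved on balls of radius $r_1$: each frequency lies in $N_{r_1^{-1}}(\theta)$ for only $O(1)$ choices of $\theta$, because the tangential separation $r^{-1}$ between neighboring $\theta$'s dominates $r_1^{-1}$. Applying Lemma~\ref{localorthogonality} to $f_{\theta'} = \sum_{\theta \subset \theta'} f_\theta$ on $B_{r_1}$ therefore gives
$$\|f_{\theta'}\|_{L^2(B_{r_1})}^2 \lesssim \sum_{\theta \subset \theta'} \|f_\theta\|_{L^2(w_{B_{r_1}, E})}^2,$$
and summing over $\theta' \in \SSS_{r_1^{-1/2}}$ produces $\|S_{B_{r_1}} f\|_{L^2(B_{r_1})}^2 \lesssim \int w_{B_{r_1}, E} \cdot g$. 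A Cauchy--Schwarz against the weight (using $\int w_{B_{r_1}, E} \lesssim |B_{r_1}|$ for $E$ large) then yields
$$|B_{r_1}|^{-1} \|S_{B_{r_1}} f\|_{L^2(B_{r_1})}^4 \lesssim |B_{r_1}|^{-1}\Big(\int w_{B_{r_1}, E}\, g\Big)^2 \lesssim \int w_{B_{r_1}, E}\, g^2.$$
Summing over the tiling and using $\sum_{B_{r_1}} w_{B_{r_1}, E}(x) \lesssim 1$ (which holds once $E > 3$) gives $\sum_{B_{r_1}} |B_{r_1}|^{-1} \|S_{B_{r_1}} f\|_{L^2(B_{r_1})}^4 \lesssim \int_{\mathbb{R}^3} g^2$.

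Step (ii) is then immediate: Lemma~\ref{incidenceintro} says precisely that
$$\int_{\mathbb{R}^3} g^2 \lesssim \sum_{r^{-1} \le s \le 1} \sum_{\tau \in \SSS_s} \sum_{U \pp U_{\tau, r^2}} |U|^{-1} \|S_U f\|_{L^2}^4,$$
and concatenating the two steps proves (\ref{eqballinfl}).

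The main obstacle is step (i): we must reconcile the mismatch between the coarse scale $r_1^{-1/2}$ appearing in $S_{B_{r_1}} f$ and the fine scale $r^{-1}$ required by Lemma~\ref{incidenceintro}. The hypothesis $r_1 \ge r$ is exactly what is needed for the hypothesis of Lemma~\ref{localorthogonality} to hold at the finer scale on balls of radius $r_1$; were $r_1 < r$, the sectors $\theta \in \SSS_{r^{-1}}$ would fail to be resolvable on $B_{r_1}$ and this reduction would break down.
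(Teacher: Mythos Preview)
Your argument is correct and matches the paper's proof essentially line for line: both apply Lemma~\ref{localorthogonality} on each $B_{r_1}$ to pass from the coarse scale $r_1^{-1/2}$ to the fine scale $r^{-1}$ (using precisely that $r_1 \ge r$ forces the $N_{r_1^{-1}}(\theta)$ to have bounded overlap), then Cauchy--Schwarz against the weight, sum over $B_{r_1}$, and finish with Lemma~\ref{incidenceintro}. The only cosmetic difference is that the paper records the weight after Cauchy--Schwarz as $w_{B_{r_1}, E/2}$ rather than $w_{B_{r_1}, E}$, which is immaterial.
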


\begin{proof} [Proof of Lemma \ref{ball inflation}] As in Lemma \ref{incidenceintro}, let $g = \sum_{\theta \in \SSS_{r^{-1}}} |f_\theta|^2$.   The functions $f_\theta$ have  essentially disjoint Fourier support. Since $r\leq r_1$, each point $\xi$ lies in $\lesssim 1$ many $N_{r_1^{-1}}(\theta)$.
	
We choose $E$ sufficiently large (for instance $E = 10$).  Then we apply the local $L^2$ orthogonality Lemma \ref{localorthogonality}, on each ${\color{black}B_{r_1}}$:
	
	$$ \| {\color{black}S_{B_{r_1}} f} \|_{L^2({\color{black}B_{r_1}})}^2 {\color{black}=\int_{B_{r_1}} \sum_{d(\tau) = r_1^{-1/2}} |f_{\tau}|^2} \lesssim {\color{black}\int_{\mathbb{R}^3} w_{B_{r_1}, E}\cdot} {\color{black}\sum_{d(\tau) = r_1^{-1/2}} \sum_{\theta \subset \tau} |f_{\theta}|^2} 
	\sim \int_{{\color{black}\mathbb{R}^3}} {\color{black}w_{B_{r_1}, E}\cdot}g. $$
	
	By Cauchy--Schwarz, we get
	
	$$ |{\color{black}B_{r_1}}|^{-1} \| {\color{black}S_{B_{r_1}} f} \|_{L^2({\color{black}B_{r_1}})}^4 \lesssim \int_{{\color{black}\mathbb{R}^3}} {\color{black}w_{B_{r_1}, E/2}\cdot}|g|^2. $$
	
Summing over $B_{r_1}$,

$$ 	\sum_{{\color{black}B_{r_1}}\subset {\color{black}\mathbb{R}^3}} |{\color{black}B_{r_1}}|^{-1} \|{\color{black}S_{B_{r_1}} f}\|_{L^2({\color{black}B_{r_1}})}^4 \lesssim \int_{{\color{black}\mathbb{R}^3}} |g|^2. $$

Lemma \ref{incidenceintro} bounds $\int_{{\color{black}\mathbb{R}^3}} |g|^2$ by the right-hand side of (\ref{eqballinfl}).
\end{proof}

{\color{black}

\section{The Lorentz rescaling}\label{lorentz}

Lorentz transformations are the symmetries of our problem, and they have been used in many earlier papers on this topic (cf. for instance \cite{W} and \cite{BD}).  Here we review {\color{black}the Lorentz} rescaling and check the properties that we will need in our rescaling argument in the next {\color{black}two sections}.

The {\color{black}piece $\Gamma_{\frac{1}{K}}$} is defined to work well with Lorentz transformations, and we now record the formula.  This formula and the Lorentz rescaling generally look nicest in a rotated coordinate system where the light cone is given by the equation $2 {\color{black} \nu_1\nu_3} = {\color{black} \nu_2^2}$.  Here ${\color{black} \nu_2} = \xi_1$, ${\color{black} \nu_1} = 2^{-1/2} (\xi_3 - \xi_2)$ and ${\color{black} \nu_3} = 2^{-1/2}(\xi_3 + \xi_2)$.  
In these coordinates, if we intersect the light cone with the plane ${\color{black} \nu_3}=1$ then we get the parabola ${\color{black} \nu_1} = (1/2) {\color{black} \nu_2^2}$.  So the light cone is actually the cone over a parabola.

Now $\Gamma_{\frac{1}{K}}$ is defined as follows.

$$\Gamma_{\frac{1}{K}}=\{ 2{\color{black} \nu_1 \nu_3}={\color{black} \nu_2^2}, 1-\frac{1}{K}\leq {\color{black} \nu_3}\leq 1,  |\frac{{\color{black} \nu_2}}{{\color{black} \nu_3}}|\leq 1\}.$$

For any real number $\eta$ with $|\eta| < 1$ and $0 < s < 1$ satisfying $-1 \leq {\color{black}\eta} \pm s \leq 1$, we can define a \emph{surface sector} $\Lambda \subseteq \Gamma_{\frac{1}{K}}$ by

\begin{equation}\label{defnoftauandtheta}
\Lambda = \Lambda (\eta, s) = \{({\color{black} \nu_1}, {\color{black} \nu_2}, {\color{black} \nu_3}) \in \Gamma_{\frac{1}{K}}: |\frac{{\color{black} \nu_2}}{{\color{black} \nu_3}}-{\color{black}\eta}| < s \}.
\end{equation}

\noindent Here $s$ is the {\it aperture} of $\Lambda$, also denoted by $d(\Lambda)$. For each $\Lambda$, let $\eta(\Lambda)$ denote the $\eta$ in \eqref{defnoftauandtheta}.

Each surface sector $\Lambda$ is closely associated to a sector $\tau = \tau(\Lambda)$, which is a rectangular box containing $\Lambda$ with smallest comparable dimensions.  The sector $\tau(\Lambda)$ is approximately the convex hull of $\Lambda$ in the sense that $\frac{1}{10} \tau(\Lambda)\subset \text{ConvexHull}(\Lambda) \subset 10\tau(\Lambda)$.  Similarly, starting with any sector $\tau$, there is an associated surface sector $\Lambda_\tau = \tau \cap \Gamma_{\frac{1}{K}}$.  The aperture of $\Lambda_\tau$ and the aperture of $\tau$ are approximately the same.

For any surface sector $\Lambda \subset \Gamma_{\frac{1}{K}}$ there is a Lorentz transformation $\mathcal{L}$ which maps $\Lambda$ diffeomorphically onto $\Gamma_{\frac{1}{K}}$.  (The precise definition of $\Gamma_{\frac{1}{K}}$ was arranged to make this work.)  The formula for ${\color{black}\mathcal{L}}$ is as follows.

{\color{black} Let} $\mathcal{L} : \Lambda(d(\Lambda), \eta) \rightarrow \Gamma_{\frac{1}{K}}$ {\color{black}be} defined as (away from $\{z=0\}$):
\begin{equation}
\left\{
\begin{array}{rll}
{\color{black} \nu_3} & \mapsto & {\color{black} \nu_3},\\
\frac{{\color{black} \nu_2}}{{\color{black} \nu_3}} & \mapsto & \frac{1}{d(\Lambda)}(\frac{{\color{black} \nu_2}}{{\color{black} \nu_3}} - \eta(\Lambda)),\\
\frac{{\color{black} \nu_1}}{{\color{black} \nu_3}} & \mapsto & \frac{1}{d(\Lambda)^2}(\frac{{\color{black} \nu_1}}{{\color{black} \nu_3}} - \eta(\Lambda)\cdot \frac{{\color{black} \nu_2}}{{\color{black} \nu_3}} + \frac{\eta(\Lambda)^2}{2}).
\end{array} \right.
\end{equation}

We can see that $\mathcal{L}$ is actually a linear transformation:
\begin{equation} \label{lortrans}
\left\{
\begin{array}{rll}
{\color{black} \nu_3} & \mapsto & {\color{black} \nu_3},\\
{\color{black} \nu_2} & \mapsto & \frac{1}{d(\Lambda)}({\color{black} \nu_2} - \eta(\Lambda){\color{black} \nu_3}),\\
{\color{black} \nu_1} & \mapsto & \frac{1}{d(\Lambda)^2}({\color{black} \nu_1} - \eta(\Lambda){\color{black} \nu_2} + \frac{\eta(\Lambda)^2}{2}{\color{black} \nu_3}).
\end{array} \right.
\end{equation}

This linear transformation $\mathcal{L}$ is called {\color{black}a} Lorentz rescaling.

Suppose that $\tau$ is a sector with $d(\tau) = s$, and let $\Lambda = \Lambda_\tau$. {\color{black} We then} study the rescaling map $\mathcal{L}$ defined in (\ref{lortrans}).  We will need to keep track of how this change of variables affects the characters in our inequalities, like sectors $\tau' \subset \tau$ and the regions $U_{\tau,R}$.

First, if $\Lambda' \subset \Lambda$ is a smaller surface sector, then $\mathcal{L}( \Lambda')$ is a surface sector of aperture $\sim s^{-1} d(\tau')$.





More precisely, since $\Lambda' \subseteq \Lambda$, we have
\begin{equation}\label{containmentbetweeentautildeandtau}
[{\color{black}\eta(\Lambda')}- d(\Lambda'), \eta(\Lambda') + d(\Lambda')] \subseteq [\eta(\Lambda)- d(\Lambda), \eta(\Lambda) + d(\Lambda)].
\end{equation}

\noindent By the above definition of $\mathcal{L}$, we can see that $\mathcal{L}(\Lambda')$ is defined as
$$\{({\color{black} \nu_1}, {\color{black} \nu_2}, {\color{black} \nu_3}) \in \Gamma_{\frac{1}{K}}: \frac{{\color{black} \nu_2}}{{\color{black} \nu_3}} \in [\frac{1}{d(\Lambda)}(\eta(\Lambda') - \eta(\Lambda)) - \frac{d(\Lambda')}{d(\Lambda)}, \frac{1}{d(\Lambda)}(\eta(\Lambda') - \eta(\Lambda)) + \frac{d(\Lambda')}{d(\Lambda)}]  \}.$$

We see that (\ref{containmentbetweeentautildeandtau}) implies the above range of ${\color{black} \nu_2 / \nu_3}$ is in $[-1, 1]$, and that $\mathcal{L}(\Lambda')$ is a {\color{black} surface sector} of aperture $\frac{d(\Lambda')}{d(\Lambda)}$ lying inside the whole $\Gamma_{\frac{1}{K}} = \mathcal{L}(\Lambda)$.

Next we consider how $\mathcal{L}$ affects sectors $\tau' \subset \tau$.  Suppose that $\Lambda_{\tau'}$ is a surface sector associated to $\tau'$.  Note that $\tau'$ is approximately the convex hull of $\Lambda_{\tau'}$.  Since taking convex hulls commutes with linear transformations, we see that $\mathcal{L}(\tau')$ is approximately the convex hull of $\mathcal{L}(\Lambda_{\tau'})$, which is a sector of aperture $\sim s^{-1} d(\tau')$.

Next we consider $\mathcal{L}(N_{R^{-1}}({\color{black}\Lambda}))$ for some $R > s^{-2}$.  Note that $N_{s^2}(\Lambda)$ is approximately $\tau(\Lambda)$, but if $R > s^{-2}$ then $N_{s^2}(\Lambda)$ is far from being a convex set.  The $R^{-1}$-neighborhood of $\Gamma_{\frac{1}{K}}$ is covered by sectors $\theta \subset \tau$ with $d(\theta) = R^{-1/2}$.  Therefore, $\mathcal{L}(N_{R^{-1}}({\color{black}\Lambda}))$ is covered by sectors $\mathcal{L}(\theta)$ with aperture $\sim s^{-1} R^{-1/2}$.  The union of these sectors is the $s^{-2} R^{-1}$-neighorhood of $\Gamma_{\frac{1}{K}}$.  In summary
$\mathcal{L}(N_{R^{-1}}({\color{black}\Lambda}))$ is approximately $N_{s^{-2} R^{-1}}(\Gamma_{\frac{1}{K}})$.

Next we consider how the adjoint transformation, $\mathcal{L}^*$, behaves on physical space.  It is standard that the adjoint transformation behaves naturally with respect to taking duals, so, if $\theta$ is a sector, then we have $\mathcal{L}(\theta)^* = \mathcal{L}^* (\theta^*)$.

Finally we consider how $\mathcal{L}^*$ affects the sets $U_{\tau,R}$.
Recall from (\ref{defUtau}) that if $\tau = \tau(s, \xi)$, then
\begin{equation} \label{defUtau'} U_{\tau, R} = \{ x \in \RR^3: |\mathbf{c}(\xi) \cdot x| \le R s^{2} \textrm{ and } | \mathbf{n}(\xi) \cdot x| \le R \textrm{ and } | \mathbf{t}(\xi) \cdot {\color{black}x} | \le R s \}. \end{equation}

\noindent There is an equivalent more conceptual description, which is useful for understanding $\mathcal{L}^* (U_{\tau, R})$.
\begin{equation} \label{defUtau'} U_{\tau, R} \approx \textrm{Convex Hull } ( \cup_{\theta \subset \tau, d(\theta) = R^{-1/2}} \theta^*  ). \end{equation}

Now let $\tau$ again denote a fixed sector with $d(\tau)=s$ and let $\mathcal{L}$ be the Lorentz rescaling that takes $\Lambda_\tau$ to $\Gamma_{\frac{1}{K}}$.

\begin{lemma} \label{LUtau} For any sector $\tau' \subset \tau$ and any $R \ge s^{-2}$,

$$ \mathcal{L}^* (U_{\tau', R}) = U_{\mathcal{L}(\tau'), s^2 R}. $$
	
\end{lemma}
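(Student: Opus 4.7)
The plan is to use the conceptual description of $U_{\tau',R}$ as a convex hull of dual planks (equation (\ref{defUtau'}) just stated in the excerpt), combined with the elementary fact that adjoint linear maps intertwine with duality: for any box $B$, $\mathcal{L}^*(B^*) = (\mathcal{L}(B))^*$. This converts the problem from a direct calculation with the coordinate description (\ref{defUtau}) into a more conceptual matching of covers.

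First I would apply the convex-hull description to write
$$U_{\tau', R} \approx \text{Convex Hull}\Bigl(\bigcup_{\theta \subset \tau',\, d(\theta) = R^{-1/2}} \theta^*\Bigr).$$
Since $\mathcal{L}^*$ is a linear map, it commutes with taking convex hulls and unions, and since $\mathcal{L}^*(\theta^*) = \mathcal{L}(\theta)^*$, we get
$$\mathcal{L}^*(U_{\tau', R}) \approx \text{Convex Hull}\Bigl(\bigcup_{\theta \subset \tau',\, d(\theta) = R^{-1/2}} \mathcal{L}(\theta)^*\Bigr).$$

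Next I would invoke the facts already established earlier in Section \ref{lorentz} about how $\mathcal{L}$ acts on sectors: if $\theta \subset \tau'$ has aperture $R^{-1/2}$, then $\mathcal{L}(\theta)$ is (approximately) a sector contained in $\mathcal{L}(\tau')$ with aperture $s^{-1} R^{-1/2} = (s^2 R)^{-1/2}$. As $\theta$ ranges over a finitely-overlapping cover of $\tau'$ by $R^{-1/2}$-sectors, the image $\mathcal{L}(\theta)$ ranges over a finitely-overlapping cover of $\mathcal{L}(\tau')$ by $(s^2 R)^{-1/2}$-sectors. Applying the convex-hull description (\ref{defUtau'}) again, this time at scale $s^2 R$ for the sector $\mathcal{L}(\tau') \subset \Gamma_{\frac{1}{K}}$, gives exactly $U_{\mathcal{L}(\tau'), s^2 R}$.

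The main obstacle, if any, is purely bookkeeping: verifying that the condition $R \ge s^{-2}$ ensures $s^2 R \ge d(\mathcal{L}(\tau'))^{-2}$, so that the set $U_{\mathcal{L}(\tau'), s^2 R}$ is well-defined (this uses that $d(\mathcal{L}(\tau')) \sim s^{-1} d(\tau') \ge s^{-1} \cdot R^{-1/2} = (s^2 R)^{-1/2}$), and that the ``$\approx$'' relations hidden in the convex-hull description compose to give the claimed equality (here equality should really be understood up to the usual constant factor, consistent with the other approximate equalities in this section). Once these checks are in place, the identity follows immediately from the functoriality of $(\cdot)^*$ under $\mathcal{L} \mapsto \mathcal{L}^*$.
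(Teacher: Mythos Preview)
Your proposal is correct and is essentially identical to the paper's own proof: the paper also applies the convex-hull description (\ref{defUtau'}), uses linearity of $\mathcal{L}^*$ together with $\mathcal{L}^*(\theta^*)=\mathcal{L}(\theta)^*$, and then re-identifies the resulting union of duals as $U_{\mathcal{L}(\tau'), s^2 R}$. Your additional bookkeeping remarks about well-definedness and the meaning of ``$=$'' versus ``$\approx$'' are accurate and not spelled out in the paper.
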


\begin{proof}

\begin{align*}
 \mathcal{L}^* (U_{\tau', R}) \approx& \mathrm{Convex Hull } ( \cup_{\theta \subset \tau', d(\theta) = R^{-1/2}} \mathcal{L}^* \theta^*  )\\
\approx  & \textrm{Convex Hull } ( \cup_{\theta \subset \tau', d(\theta) = R^{-1/2}} \mathcal{L}(\theta)^*  )\\
\approx & \textrm{Convex Hull } ( \cup_{\theta \subset \mathcal{L}(\tau'), d(\theta) = s^{-1} R^{-1/2}} \theta^* ) \approx U_{\mathcal{L}(\tau'), s^2 R}. \qedhere
\end{align*}
\end{proof}

We have now gathered enough background about Lorentz rescaling to carry out our Lorentz rescaling arguments in the next {\color{black}two sections}.

\section{The Proof of Lemma~\ref{small ball}} \label{secparab}
In this section, we prove  Lemma~\ref{small ball}.  First we prove several lemmas about the ``locally constant property'' of $f_{\theta}$.

\begin{lemma}\label{lem: locally constant}
	Let $\theta\subset \mathbb{R}^n$ be a compact convex set which is symmetric about a center point $c(\theta)$.
	If $\text{supp} \hat{f}_{\theta}\subset \theta$ and $T_{\theta}=\theta^*=\{x: |x\cdot (y-c(\theta))| \leq 1 \text{~for ~ all~} y\in \theta\}$, then there exists {\color{black}a positive} function $\eta_{T_{\theta}}$  satisfying:
	\begin{enumerate}
		\item $\eta_{T_{\theta}}$ is essentially supported on $10T_{\theta}$ and rapidly decays away from it:  for any integer $N\geq 0$, there exists a constant $C_N$ such that $\eta_{T_{\theta}}(x)\leq C_N  (n(x, 10T_{\theta}))^{-N}$ where $n(x, 10T_{\theta})$ is the smallest positive integer $n$ such that $x \in n\cdot 10T_{\theta}$,
		\item  $\|\eta_{T_{\theta}}\|_{L^1}\lesssim 1$,
		\item
		\begin{equation}\label{locally constant}
		|f_{\theta}|\leq \sum_{T\pp T_{\theta}} c_T \chi_T\leq |f_{\theta}|\ast \eta_{T_{\theta}}
		\end{equation}
		{\color{black}where $c_T$ is defined as $\max_{x\in T} |f_{\theta}|(x)$} and the sum $\sum_{T\pp T_{\theta}}$ is over a finitely overlapping cover $\{T\}$ of $\mathbb{R}^n$ with each $T\pp T_{\theta}$.
	\end{enumerate}
\end{lemma}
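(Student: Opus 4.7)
The plan is to establish this statement via the classical reproducing-formula / locally constant principle, exploiting that $\hat f_\theta$ has compact support in $\theta$. The key is to construct a Schwartz function $\tilde\phi$ with $\hat{\tilde\phi}\equiv 1$ on $\theta$ and $|\tilde\phi|$ essentially concentrated on $T_\theta$, then take $\eta_{T_\theta}$ to be an appropriate sliding maximum of $|\tilde\phi|$ over a $T_\theta$-window.

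The construction of $\tilde\phi$ is standard. After the modulation $\tilde\phi(x)=e^{2\pi i c(\theta)\cdot x}\phi(x)$, which leaves $|\tilde\phi|$ unchanged, one may assume $\theta$ is symmetric about the origin, with polar dual $T_\theta=\theta^*$. Fix once and for all a non-negative radial Schwartz function $\phi_0$ with $\hat\phi_0\equiv 1$ on the Euclidean unit ball, and let $A$ be a linear map carrying the unit ball onto $\theta$. Setting $\phi(x)=|\det A|\,\phi_0(A^T x)$, Fourier inversion gives $\hat\phi(\xi)=\hat\phi_0(A^{-1}\xi)\equiv 1$ on $\theta$, $\|\phi\|_{L^1}=\|\phi_0\|_{L^1}$, and $|\phi|$ has Schwartz decay with $|\phi(x)|\lesssim_N |\det A|(1+|A^T x|)^{-N}$, where $|A^T x|\sim n(x,T_\theta)$ since $T_\theta = A^{-T} B_1$.

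Now define $\eta_{T_\theta}(u):=C_n\sup_{w\in 2T_\theta}|\phi(u+w)|$ for a large dimensional constant $C_n$. Properties (1) and (2) follow at once from the corresponding properties of $|\phi|$: the sliding supremum over a $T_\theta$-window only enlarges the effective concentration region by a factor of $O(1)$, so the Schwartz decay and $L^1$ bound (up to dimensional factors absorbed into $C_n$) are preserved. For (3), the first inequality $|f_\theta|\leq\sum_T c_T\chi_T$ is immediate from the definition $c_T=\max_T|f_\theta|$ together with the covering hypothesis. For the second, since $f_\theta=f_\theta*\tilde\phi$ and $|\tilde\phi|=|\phi|$, fix any $x\in T$ and choose $y_T\in T$ with $|f_\theta(y_T)|=c_T$. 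Since $T$ is a translate of the symmetric set $T_\theta$, we have $y_T-x\in T-T\subset 2T_\theta$; writing $y_T-z=(x-z)+(y_T-x)$ yields
\begin{equation*}
c_T\leq\int|f_\theta(z)|\,|\phi(y_T-z)|\,dz\leq\int|f_\theta(z)|\,\eta_{T_\theta}(x-z)\,dz=(|f_\theta|*\eta_{T_\theta})(x),
\end{equation*}
by the defining supremum of $\eta_{T_\theta}$. Summing over $T\ni x$ introduces only an $O(1)$ multiplicative factor from the finite-overlap hypothesis of the cover, which is absorbed into $C_n$.

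No substantive obstacle is anticipated. The only bookkeeping concern is to ensure all constants in (1) and (2) depend only on $n$ and $N$ rather than on $\theta$; this is automatic since the entire construction is linearly equivariant under $A$ and the statement is invariant under modulation, so one reduces to the fixed reference function $\phi_0$.
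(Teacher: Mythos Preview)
Your approach matches the paper's: both write $f_\theta = f_\theta * \check\phi_\theta$ with $\hat\phi_\theta \equiv 1$ on $\theta$ and define $\eta_{T_\theta}$ as a sliding supremum of $|\check\phi_\theta|$ over an $O(1)$-dilate of $T_\theta$ (the paper uses a $10T_\theta$-window, you use $2T_\theta$), then read off (3) from $c_T\le(|f_\theta|*\eta_{T_\theta})(x)$ for $x\in T$. The only imprecision is that a linear map cannot carry $B_1$ \emph{onto} a general symmetric convex body $\theta$; fix this by invoking John's theorem to choose $A$ with $\theta\subset A(B_1)\subset C_n\,\theta$, which is all your argument actually needs.
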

\begin{proof}
	We bound $|f_{\theta}|$ by
	\begin{equation}
	|f_{\theta}|\leq \sum_{T\pp T_{\theta}} c_{T}\chi_T{\color{black}.}
	\end{equation}

	Let $\phi_{\theta}$ be a smooth bump function supported on $2\theta$ and $\phi_{\theta}=1$ on $\theta$.
	Since $\text{supp} \hat{f}_{\theta}\subset \theta$, we have  $\hat{f}_{\theta}=\hat{f}_{\theta}\phi_{\theta}$ and $f_{\theta}= f_{\theta}\ast \phi_{\theta}^{\vee}$. Let $\eta_{T_{\theta}}(x) =\underset{t\in x+10T_{\theta}}{\max} |\phi_{\theta}^{\vee}|(t)$.  By non stationary phase,  $\phi_{\theta}^{\vee}$ is a function essentially supported on $T_{\theta} =\theta^*$, $|\phi_{\theta}^{\vee} (x)|\leq C_N (n(x, T_{\theta}))^{-N}$ and $\|\phi_{\theta}^{\vee}\|_{L^1}\sim 1$,  so  $\eta_{T_{\theta}}$ satisfies (1) and (2).

	 For any $T\pp T_{\theta}$,
	\begin{align*}
	\max_{x\in T} |f_{\theta}|(x)&\leq \max_{x\in T}\int |f_{\theta}|(y) |\phi_{\theta}^{\vee}(x-y)|dy\\
	&\leq \min_{x\in T} \int|f_{\theta}|(y) \eta_{T_{\theta}}(x-y) dy
	\end{align*}
	because for each $y$, $\underset{x\in T}{\max} |\phi_{\theta}^{\vee}|(x-y) \leq \underset{x\in T}{\min} ~~ \underset{t\in x-y+10T_{\theta}}{\max}|\phi_{\theta}^{\vee}|(t)$.
\end{proof}

\begin{lemma}\label{lem: convolution}
	Let $\eta_{T_{\theta}}$ be defined as in Lemma~\ref{lem: locally constant} and $T\pp T_{\theta}$, then for any integer $N>0$,
	there exists a {\color{black}positive} function $w_{T} =1 $ on $10T$ and $w_{T}(x)\leq C_N(1+\text{dist}(x, T))^{-N}$ such that  for any $1\leq p<\infty$,	 \begin{equation}\label{convolution}
	 \int_T (|f_{\theta}|\ast \eta_{T_{\theta}} )^p \lesssim_p \int |f_{\theta}|^p w_T.
	 \end{equation}
\end{lemma}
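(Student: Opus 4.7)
The plan is to first use Jensen's inequality (or equivalently Hölder, with conjugate exponents $p$ and $p'$) to push the $p$-th power inside the convolution, and then exchange the order of integration via Fubini. Since $\eta_{T_\theta} \ge 0$ and $\|\eta_{T_\theta}\|_{L^1} \lesssim 1$ by Lemma~\ref{lem: locally constant}, for any non-negative $g$ and $p \ge 1$ we have
$$(g * \eta_{T_\theta})(x)^p \le \|\eta_{T_\theta}\|_{L^1}^{p-1}\,(g^p * \eta_{T_\theta})(x) \lesssim_p (g^p * \eta_{T_\theta})(x).$$
Applied with $g = |f_\theta|$ and integrated over $T$, this gives, after swapping the order of integration,
$$\int_T (|f_\theta|*\eta_{T_\theta})^p \lesssim_p \int |f_\theta|^p(y)\, K_T(y)\, dy, \qquad K_T(y) := \int_T \eta_{T_\theta}(x-y)\,dx.$$
The remaining task is to verify that $K_T$ (up to a universal multiplicative constant) can be used as the weight $w_T$.

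To check the shape of $K_T$, I will use the two quantitative properties of $\eta_{T_\theta}$ from Lemma~\ref{lem: locally constant}: the $L^1$ bound and the dilation-scale decay $\eta_{T_\theta}(z)\le C_N (n(z, 10 T_\theta))^{-N}$. The uniform bound $K_T(y) \le \|\eta_{T_\theta}\|_{L^1} \lesssim 1$ is immediate. For $y$ lying near $T$ (say within a bounded number of dilations of $T$ about its centroid), the fact that $T$ is itself a translate of $T_\theta$ and that $\eta_{T_\theta}$ is essentially $|T_\theta|^{-1}$ on $10 T_\theta$ forces $K_T(y) \sim 1$. For $y$ far from $T$, if $n(y-c(T), 10 T_\theta) \sim m$, then every $x \in T$ satisfies $n(x-y, 10 T_\theta) \sim m$, so the decay of $\eta_{T_\theta}$ yields $K_T(y) \lesssim_N m^{-N}$. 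Setting $w_T(y) := \max(\chi_{10T}(y),\, C\cdot K_T(y))$ for a sufficiently large constant $C$ gives a function equal to $1$ on $10T$ and decaying faster than any prescribed polynomial outside $10T$.

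The main (and only) technical point is bookkeeping: since $T$ is a translate of the possibly very eccentric plank $T_\theta$, the distance appearing in the statement must be interpreted in the scale-invariant sense coming from dilations of $T$ about its centroid, which matches the natural decay scale of $\eta_{T_\theta}$. Once this identification is made, the polynomial decay in $\text{dist}(x, T)$ follows directly from the corresponding decay of $\eta_{T_\theta}$, and one chooses $N$ in the pointwise decay of $\eta_{T_\theta}$ slightly larger than the target $N$ in the weight (to account for the dimensional loss when bounding $K_T(y)$ by $|T|\cdot\max_{x\in T}\eta_{T_\theta}(x-y)$). No other subtlety is involved; the argument is essentially a ``locally constant'' transferral of the $L^p$ estimate from the convolution to a weighted integral.
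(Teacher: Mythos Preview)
Your Jensen-plus-Fubini approach is correct and more streamlined than the paper's: the paper first dominates $\eta_{T_\theta}$ by $\sum_{T'\pp T_\theta} C_{T'}\chi_{T'}$, applies H\"older across the sum over $T'$ (with artificial weights $n(T',T_\theta)^{-4(p-1)/p}$), and then uses Young's inequality on each piece $|f_\theta|*C_{T'}\chi_{T'}$. Your route bypasses both the decomposition and the H\"older step by pushing the $p$-th power through the convolution in one move, leaving only the kernel $K_T(y)=\int_T\eta_{T_\theta}(x-y)\,dx$ to analyze.

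There is, however, one genuine slip in your decay analysis of $K_T$. You say the factor $|T|$ appearing in
\[
K_T(y)\le |T|\cdot\max_{x\in T}\eta_{T_\theta}(x-y)\le C_N\,|T|\,m^{-N}
\]
is a ``dimensional loss'' absorbable by taking $N$ slightly larger. But $|T|=|T_\theta|$ is a fixed (possibly enormous) volume, independent of the dilation parameter $m$; no adjustment of $N$ removes it. The easy fix is to observe that $\eta_{T_\theta}$, by its explicit construction as $\eta_{T_\theta}(x)=\max_{t\in x+10T_\theta}|\phi_\theta^\vee(t)|$, actually obeys the stronger pointwise bound
\[
\eta_{T_\theta}(z)\lesssim_N |T_\theta|^{-1}\bigl(n(z,10T_\theta)\bigr)^{-N},
\]
which is exactly what the paper itself invokes in its proof (stated there as ``$C_{T'}\cdot|T|\lesssim_N n(T',T_\theta)^{-N}$''). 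With this extra $|T_\theta|^{-1}$ the volume factor cancels and $K_T(y)\lesssim_N m^{-N}$ follows. Once corrected, your argument is complete and arguably cleaner than the paper's.
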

\begin{proof}
	We only need to prove the lemma for $N$ sufficiently large (depending on $p$).
	
	The function $\eta_{T_{\theta}}$ satisfies
	\begin{equation}
	\eta_{T_{\theta}} \leq \sum_{T\pp T_{\theta}} C_T \chi_T
	\end{equation}
	where $C_{T} \cdot |T| \lesssim_N  n(T, T_{\theta})^{-N}$ for any large integer $N>0$ and $n(T, T_{\theta})$ is the smallest  $n\geq 1$ such that $ T\subset nT_{\theta}$.
	
	By H\"{o}lder's inequality,
	\begin{align*}
	\int_T (|f_{\theta}| \ast \eta_{T_{\theta}})^p &\leq \int_{T} (\sum_{T'\pp T_{\theta}} |f_{\theta}|\ast C_{T'}\chi_{T'}  )^p\\
	&{\color{black}=}\int_T (\sum_{T'\pp T_{\theta}} n(T', T_{\theta})^{-\frac{4(p-1)}{ p}} \cdot
	 n(T', T_{\theta})^{\frac{4(p-1)} {p}} |f_{\theta}|\ast C_{T'} \chi_{T'})^p
	 \\
	&\lesssim  (\sum_{T'\pp T_{\theta}} n(T', T_{\theta}) ^{-4})^{p-1}\cdot  \sum_{T'\pp T_{\theta}} n(T', T_{\theta})^{4(p-1)} \int_T(|f_{\theta}|\ast C_{T'}\chi_{T'})^p\\
	&\lesssim  \sum_{T'\pp T_{\theta}} n(T', T_{\theta})^{4(p-1)} \int_T(|f_{\theta}|\ast C_{T'} \chi_{T'})^p.
	\end{align*}
	
	Let $\chi_{T - T'}(x)$ be the characteristic function {\color{black}of} the Minkowski sum $T-T'= T+ (-T')$.  Then by Young's inequality,
	\begin{align*}
	\int_T (|f_{\theta}|\ast C_{T'} \chi_{T'})^p & \leq \int ((|f_{\theta}|\chi_{T-T'} )\ast (C_{T'}\chi_{T'}))^p \\
	&\lesssim_N n(T', T_{\theta})^{-pN}.  \int_{T-T'} |f_{\theta}|^p
	\end{align*}
	It suffices to choose $w_T(x)\sim_N \sum_{ \tilde{T}\pp T} n(\tilde{T}, T)^{-N} \chi_{\tilde{T}}(x)$.
\end{proof}
\begin{cor}\label{cor: convolution}
	If $U$ is tiled by $T\pp T_{\theta}$, then  for any $1\leq p<\infty$,
	\begin{equation}
	\int_U (|f_{\theta}|\ast \eta_{T_{\theta}} )^p \lesssim_p \int |f_{\theta}|^p w_U
	\end{equation}
	where $w_U{\color{black}\geq 0}$ is essentially supported on $10U$ and rapidly decays {\color{black}away from it}.
\end{cor}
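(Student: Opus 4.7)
\medskip

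\noindent\textbf{Proof proposal.} The corollary follows from Lemma~\ref{lem: convolution} applied tile-by-tile and summed. Since $U$ is tiled by $T\pp T_\theta$, one simply writes
\begin{equation*}
\int_U (|f_\theta|\ast \eta_{T_\theta})^p \;=\; \sum_{T\pp T_\theta,\, T\subset U}\int_T (|f_\theta|\ast \eta_{T_\theta})^p \;\lesssim_p\; \sum_{T\subset U}\int |f_\theta|^p w_T \;=\; \int |f_\theta|^p w_U,
\end{equation*}
where we have defined $w_U := \sum_{T\subset U} w_T$ and invoked Lemma~\ref{lem: convolution} on each tile. Thus the only content of the corollary is to check that this choice of $w_U$ has the two advertised properties: nonnegative, bounded (with an absolute constant) on $10U$, and with rapid polynomial decay outside.

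Nonnegativity is automatic from the corresponding property of each $w_T$. For the bound on $10U$, I would use that by Lemma~\ref{lem: convolution} each $w_T$ is dominated by $C_N(1+\mathrm{dist}(x,T))^{-N}$ (in units of $\mathrm{diam}(T_\theta)$). For a fixed $x\in 10U$, the tiles $T\subset U$ can be grouped by dyadic distance from $x$: the tiles at distance $\sim 2^k \mathrm{diam}(T_\theta)$ from $x$ number at most $O(2^{3k})$, while each contributes $\lesssim 2^{-kN}$. Choosing $N$ larger than the ambient dimension (say $N\geq 10$) in Lemma~\ref{lem: convolution} makes the geometric series converge, giving $w_U(x)\lesssim 1$ uniformly on $10U$.

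For $x\notin 10U$ with $\mathrm{dist}(x,U)=:r$, the same dyadic decomposition — now with all tiles $T\subset U$ at distance $\gtrsim r$ from $x$ — yields a bound of the form $w_U(x)\lesssim_N (1+r/\mathrm{diam}(T_\theta))^{-N+O(1)}$, which, after trading in a few powers of $N$ for the dimensional overhead in the tile count, gives arbitrary polynomial decay in $\mathrm{dist}(x,U)/\mathrm{diam}(U)$.

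The only ``obstacle'' here is purely bookkeeping: one must take the exponent $N$ in Lemma~\ref{lem: convolution} sufficiently large so that the sum $\sum_{T\subset U} w_T$ converges uniformly and retains enough decay for the purposes the corollary will be used elsewhere in the paper. Since Lemma~\ref{lem: convolution} holds for every $N$ (the constant $C_N$ absorbing the dependence), this is not really an obstruction. No new ideas beyond the previous lemma are needed.
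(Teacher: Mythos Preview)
Your approach---sum Lemma~\ref{lem: convolution} over the tiles $T\subset U$ and set $w_U:=\sum_{T\subset U}w_T$---is exactly what the paper does (it gives only a Remark, not a formal proof, after the corollary). So the strategy is correct.

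There is, however, a slip in your counting. You claim that ``tiles at distance $\sim 2^k\,\mathrm{diam}(T_\theta)$ from $x$ number at most $O(2^{3k})$.'' For an eccentric plank $T_\theta$ (say of dimensions $a_1\times a_2\times a_3$ with $a_1\ll a_3$), a Euclidean ball of radius $2^k a_3$ contains roughly $2^{3k}\cdot a_3^2/(a_1 a_2)$ translates of $T_\theta$, not $O(2^{3k})$; the eccentricity factor can be arbitrarily large. So as written the geometric series does not close, and normalizing by $\mathrm{diam}(T_\theta)$ is the wrong scale.

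The fix is to work with the anisotropic dilation count $n(x,T)$ (the smallest $n$ with $x\in nT$) rather than Euclidean distance. This is the form of $w_T$ actually produced in the \emph{proof} of Lemma~\ref{lem: convolution}: $w_T(x)\sim_N n(x,T)^{-N}$. The paper's Remark supplies exactly the two observations you need: (i) if $x\in nU\setminus (n-1)U$ then $x\notin (n-1)T$ for every $T\subset U$, so $n(x,T)\gtrsim n$ uniformly in $T$; and (ii) for any $m$, a point $x$ lies in $mT$ for at most $\lesssim m^3$ tiles $T$ of the tiling. Combining these, $w_U(x)\lesssim \sum_{m\ge n(x,U)} m^{-N}\cdot m^3\lesssim n(x,U)^{-(N-4)}$, which is bounded on $10U$ and decays rapidly off it in the correct (anisotropic) sense. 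With this replacement your argument goes through; no new ideas are needed.
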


Remark.  It is important that $w_U$ can be taken uniformly independent of the choice of $T$. To see this, simply notice that if $x \in nU$ and $x \notin (n-1)U$ then $x$ cannot be in $(n-1)T$ for any $T \subset U$. Moreover for any $m$, a point $x$ lies in  $mT$ for $\lesssim m^3$ different $T$ in a given tiling $\{T\}_{T\pp T_{\theta}}$ of $\mathbb{R}^3$.

\begin{lemma}\label{iterate}
	Let $\theta_1,\theta_2\subset \tau $ be two sectors of aperture $d(\theta_1)=d(\theta_2)={\color{black}K^{-1/2}}$, and $\text{dist}(\theta_1, \theta_2)\sim d(\tau) =s>{\color{black}K^{-1/2}}$, then for any {\color{black}functions $\mathrm{supp} \hat{f}_{\theta_1}\subset N_{\frac{1}{K}}\Gamma_{\frac{1}{K}} \cap \theta_1$ and $\mathrm{supp} \hat{f}_{\theta_2}\subset N_{\frac{1}{K}}\Gamma_{\frac{1}{K}} \cap \theta_2$},
	$$\sum_{{\color{black}B_{K^{1/2}}}\subset \mathbb{R}^3} 
	\int_{B_{K^{1/2}}} |f_{\theta_1}f_{\theta_2}|^2 \lesssim  {\color{black}s^{-1}} \sum_{{\color{black} B_K \subset \mathbb{R}^3}}  |{\color{black} B_K}|^{-1}
	\int |f_{\theta_1}|^2 {\color{black} w_{B_K}}  \int |f_{\theta_2}|^2 {\color{black} w_{B_K}}.$$
\end{lemma}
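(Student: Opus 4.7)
The plan is to establish Lemma~\ref{iterate} by a Plancherel--Cauchy--Schwarz bilinear argument on each ball $B_K$, in which the transversality $\mathrm{dist}(\theta_1,\theta_2)\sim s$ produces the $s^{-1}$ gain through a geometric intersection estimate.

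Since $\{B_{K^{1/2}}\}$ and $\{B_K\}$ are finitely overlapping covers of $\mathbb{R}^3$, the left hand side equals $\int_{\mathbb{R}^3}|f_{\theta_1}f_{\theta_2}|^2\sim \sum_{B_K}\int_{B_K}|f_{\theta_1}f_{\theta_2}|^2$ up to absolute constants. Hence it suffices to prove, for each ball $B_K$, the local bound
\[
\int_{B_K}|f_{\theta_1}f_{\theta_2}|^2 \lesssim s^{-1}|B_K|^{-1}\int|f_{\theta_1}|^2 w_{B_K}\int|f_{\theta_2}|^2 w_{B_K}.
\]
To prove this I pick a Schwartz cutoff $\psi=\psi_{B_K}$ with $\psi\ge 1$ on $B_K$, $|\psi|^2\lesssim w_{B_K}$, and $\widehat{\psi}$ supported in a ball of radius $\sim 1/K$ about the origin. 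Setting $F_i=\psi\, f_{\theta_i}$, the Fourier support of $F_i$ is contained in $N_{O(1/K)}(\theta_i\cap N_{1/K}\Gamma_{\frac{1}{K}})$. Then by Plancherel and Cauchy--Schwarz applied to the convolution $\widehat{F_1F_2}=\widehat{F_1}\ast\widehat{F_2}$,
\[
\int_{B_K}|f_{\theta_1}f_{\theta_2}|^2 \le \|F_1F_2\|_{L^2}^2 \le \sup_{\xi}|J(\xi)|\cdot\|F_1\|_{L^2}^2\|F_2\|_{L^2}^2,
\]
where $J(\xi)=\mathrm{supp}\,\widehat{F_1}\cap(\xi-\mathrm{supp}\,\widehat{F_2})$. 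Since $\|F_i\|_{L^2}^2\lesssim \int|f_{\theta_i}|^2 w_{B_K}$, the local bound is reduced to the geometric claim $\sup_{\xi}|J(\xi)|\lesssim s^{-1}K^{-3}$.

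The heart of the proof is this geometric estimate. Using the rotated coordinates of Section~\ref{lorentz}, I parametrize $\Gamma_{\frac{1}{K}}$ as the surface $\nu_1=\nu_2^2/(2\nu_3)$ with $\nu_3\in[1-1/K,1]$. Writing $\eta=(a,b,c)\in\mathrm{supp}\,\widehat{F_1}$ gives the constraints $c\in[1-1/K,1]$, $|b-\eta(\theta_1)|\lesssim K^{-1/2}$, and $|a-b^2/(2c)|\lesssim 1/K$; the analogous conditions on $\xi-\eta\in\mathrm{supp}\,\widehat{F_2}$ give the corresponding bounds on $\xi_3-c$, $\xi_2-b$, and $\xi_1-a$. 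For fixed $\xi$, the $c$-range has length $\lesssim 1/K$ (cone height) and the $a$-range has length $\lesssim 1/K$ (cone thickness). Eliminating $a$ between the two cone-thickness constraints leaves one constraint on $b$:
\[
\bigl|\tfrac{b^2}{2c}+\tfrac{(\xi_2-b)^2}{2(\xi_3-c)}-\xi_1\bigr|\lesssim 1/K.
\]
Its $b$-derivative is $b/c-(\xi_2-b)/(\xi_3-c)$, which on the relevant range (where $J(\xi)$ is nonempty, forcing $b\approx\eta(\theta_1)$ and $\xi_2-b\approx\eta(\theta_2)$) is $\approx\eta(\theta_1)-\eta(\theta_2)\sim s$. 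Hence the $b$-range is $\lesssim 1/(sK)$, and multiplying the three ranges gives $|J(\xi)|\lesssim 1/(sK^3)\sim s^{-1}|B_K|^{-1}$.

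The main obstacle is verifying that this $b$-derivative is really of size $\sim s$ (not smaller) uniformly in $\xi$ on the set where $J(\xi)\ne\emptyset$; the hypotheses $s>K^{-1/2}$ and $\mathrm{dist}(\theta_1,\theta_2)\sim s$ are exactly what ensures that the two terms $b/c$ and $(\xi_2-b)/(\xi_3-c)$ do not cancel to lower order than $s$. The remaining ingredients---controlling $\|F_i\|_{L^2}^2$ by $\int|f_{\theta_i}|^2 w_{B_K}$ via the rapid decay of $\psi_{B_K}$, and handling the $O(1/K)$ enlargement of the Fourier supports under the cutoff---are routine bookkeeping once $E$ is taken sufficiently large.
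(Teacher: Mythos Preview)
Your argument is correct, but it takes a genuinely different route from the paper's proof. The paper works in \emph{physical space}: using the locally constant property (Lemma~\ref{lem: locally constant}), each $|f_{\theta_j}|$ is replaced by a superposition $\sum_{T_j} c_{T_j}\chi_{T_j}$ of slabs $T_j\pp\tilde\theta_j^{*}$ of dimensions $K^{1/2}\times K\times K$, and the key geometric input is the bilinear-Kakeya fact that two such slabs $T_1,T_2\subset B_K$ with angular separation $\sim s$ satisfy $|T_1\cap T_2|\sim s^{-1}|B_K|^{-1}|T_1||T_2|$; Corollary~\ref{cor: convolution} then converts back to weighted $L^2$ norms. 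You instead work in \emph{Fourier space}: after the cutoff $\psi_{B_K}$, Plancherel plus Cauchy--Schwarz on the convolution reduces matters to the dual geometric fact $\sup_\xi|\mathrm{supp}\,\widehat{F_1}\cap(\xi-\mathrm{supp}\,\widehat{F_2})|\lesssim s^{-1}K^{-3}$, which you verify by a direct computation on the parabolic model. The two geometric estimates are essentially dual to one another, and both are standard; your approach is the classical bilinear $L^2$ restriction mechanism (C\'ordoba, Fefferman), while the paper's is the ball-inflation mechanism of Bourgain--Demeter. Your version is slightly more self-contained in that it avoids invoking the locally constant lemmas, at the cost of a short calculus computation; the paper's version fits more naturally with the wave-packet framework already set up. One small remark: in the borderline regime $s\sim K^{-1/2}$ your lower bound $|Q'(b)|\gtrsim s$ can degenerate because the $O(K^{-1/2})$ error from $b\approx\eta(\theta_1)$ is comparable to $s$; but in that case the trivial bound $|b\text{-range}|\lesssim K^{-1/2}$ from the sector constraint already gives $|J(\xi)|\lesssim K^{-5/2}\sim s^{-1}K^{-3}$, so the estimate still holds.
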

\begin{proof}
The proof is essentially a bilinear-Kakeya-style\footnote{Bilinear Kakeya is an elementary statement stating: Let $|\Bbb{T}_1|$ and $|\Bbb{T}_2|$ be two finite families of infinite strips in $\Bbb{R}^2$ such that each strip has width $1$. Assume further that each $T_1 \in \Bbb{T}_1$ and each $T_2 \in \Bbb{T}_2$ have their directions $\sim 1$-separated, then $\int_{\Bbb{R}^2} (\sum_{T_1 \in \Bbb{T}_1} \chi_{T_1})\cdot(\sum_{T_2 \in \Bbb{T}_2} \chi_{T_2}) \lesssim |\Bbb{T}_1|\cdot|\Bbb{T}_2|$.} estimate in $\mathbb{R}^2$ 
plus the  locally constant property  in Lemma~\ref{lem: locally constant}.  This proof is a simple case of the ball inflation theorem (Theorem 9.2 in \cite{BD2}) in the proof of the  Bourgain--Demeter decoupling theorem.
	Since {\color{black}$\mathrm{supp} \hat{f}_{\theta_j}\subset N_{\frac{1}{K}}\Gamma_{\frac{1}{K}} \cap \theta_j$ for $j = 1, 2$}, the Fourier support of $f_{\theta_j}$ lies inside    a box $\tilde{\theta}_j$ of dimensions {\color{black}$K^{-1/2} \times K^{-1} \times K^{-1}$} with a common {\color{black}$K^{-1}$}--side on the $\nu_3$--direction {\color{black}(Recall the $(\nu_1, \nu_2, \nu_3)$-coordinate system and the equation of $\Gamma_{\frac{1}{K}}$ from Section \ref{lorentz})}.  And $T_{\tilde{\theta}_j}=\tilde{\theta}_j^*$ becomes a {\color{black}slab} of dimensions {\color{black}$K^{1/2} \times K \times K$}.  Since $\text{dist}(\tilde{\theta_1}, \tilde{\theta_2}) = \text{dist}(\theta_1, \theta_2)=s$,
for  each $T_1\pp T_{\tilde{\theta}_1}, T_{2}\pp T_{\tilde{\theta}_2}$ and $T_1, T_2\subset {\color{black}B_K}$, {\color{black}we have} $|{\color{black}T_1 \cap T_2}| {\color{black} \sim} {\color{black}K^{1/2}\cdot (s^{-1}K^{1/2})\cdot K = s^{-1}K^2}$. {\color{black}Hence  the key inequality $|T_1 \cap T_2| \sim s^{-1} |B_K|^{-1} |T_1| |T_2|$ holds{\footnote{\color{black}Note: All arguments in this paper work if we dilate a convex body by a constant. If we replace $B_K$ by the slightly bigger $B_{10K}$, then it is possible for $T_1$ and $T_2$ to miss each other, hence we can only obtain ``$\lesssim$''instead of the above ``$\sim$''. However we only use ``$\lesssim$'' in the inequality below so ``$\lesssim$'' is good enough to have.}}.}
	
	 Using 	Lemma~\ref{lem: locally constant}, now we are ready to bound
	\begin{align*}
	\sum_{{\color{black}B_{K^{1/2}}}\subset {\color{black}B_K}} 
	\int_{B_{K^{1/2}}} |f_{\theta_1}f_{\theta_2}|^2	&\leq \sum_{\color{black}\substack{B_{K^{1/2}} \subset B_K \\ T_1\pp T_{\tilde{\theta}_1}, {\color{black}B_{K^{1/2}}}\cap {\color{black}T_1} \neq \emptyset \\ T_2\pp T_{\tilde{\theta}_2}, {\color{black}B_{K^{1/2}}}\cap T_2 \neq \emptyset}} |{\color{black}B_{K^{1/2}}}| c_{T_1}^2 c_{ T_2}^2\\
	&{\color{black}\lesssim s^{-1}}  |{\color{black}B_{K}}|^{-1} (\int_{\color{black}B_{K}} \sum_{T_1\pp T_{\tilde{\theta}_1}} c_{ T_1}^2 \chi_{T_1})(\int_{\color{black}B_{K}} \sum_{T_2\pp T_{\tilde{\theta}_2} }c_{T_2}^2 \chi_{T_2})\\
	&\leq {\color{black}s^{-1}}  |{\color{black}B_{K}}|^{-1} \int_{\color{black}B_{K}} ( |f_{\theta_1}|\ast \eta_{T_{\tilde{\theta}_1}} )^2  \int_{\color{black}B_{K}}  ( |f_{\theta_2}|\ast \eta_{T_{\tilde{\theta}_2}} )^2  \\
	\text{(Corollary~\ref{cor: convolution})}&\lesssim {\color{black}s^{-1}}  |{\color{black}B_{K}}|^{-1} \int |f_{\theta_1}|^2 w_{\color{black}B_{K}} \int |f_{\theta_2}|^2 w_{\color{black}B_{K}}. \qedhere
	\end{align*}
\end{proof}

\begin{lemma}\label{lem: parabola}
	Let $f$ be a function whose Fourier transform is supported on the $\frac{1}{K}$-neighborhood of $\Gamma_{\frac{1}{K}}$. For any $\delta > 0$,
	\begin{equation}\label{F4}
\|f\|_{L^4(\mathbb{R}^3)}^4 \leq C_{\delta} K^{\delta} \sum_{K^{-1/2}\leq s \leq 1}\sum_{d(\tau)=s}  \underset{U\pp U_{\tau,K}}{\sum} |U|^{-1} \|{\color{black}S_U f}\|_{L^2}^4.
	\end{equation}
\end{lemma}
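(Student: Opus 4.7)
The plan is to combine two ingredients: a C\'ordoba-type $L^4$ orthogonality reduction made possible by the parabolic-cylinder geometry of $\Gamma_{\frac{1}{K}}$, and the multi-scale Kakeya bound from Lemma~\ref{incidenceintro} applied at scale $r = K^{1/2}$.

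For the first step, I compute $\|f\|_{L^4(\mathbb{R}^3)}^4 = \||f|^2\|_{L^2}^2$ by Plancherel and expand $|f|^2 = \sum_{\theta_1, \theta_2 \in \SSS_{K^{-1/2}}} f_{\theta_1} \bar f_{\theta_2}$. The Fourier transform of each summand is supported on the Minkowski difference $\theta_1 - \theta_2$. The crucial geometric fact is that these difference sets have $O(1)$ overlap multiplicity on $\Gamma_{\frac{1}{K}}$: parametrizing caps by $(\eta, h) \in [-1,1] \times [1 - K^{-1}, 1]$ with $\eta = \nu_2/\nu_3$ and $h = \nu_3$, the map
\[
(\eta_1, h_1, \eta_2, h_2) \mapsto \bigl( \tfrac{1}{2}(h_1\eta_1^2 - h_2\eta_2^2),\ h_1\eta_1 - h_2\eta_2,\ h_1 - h_2 \bigr)
\]
has 3-dimensional image and generically 1-dimensional fibers; the fiber parameter can be taken to be $h_2$, which is confined to an interval of length $K^{-1}$ matching the cap resolution in the $h$-direction. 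A short calculation (differentiating the equations with respect to $h_2$) confirms that along such a fiber the pair of caps containing $(\eta_1, h_1)$ and $(\eta_2, h_2)$ is uniquely determined, so the overlap multiplicity is $O(1)$. Consequently, Plancherel plus Cauchy--Schwarz on the frequency side yields
\[
\|f\|_{L^4(\mathbb{R}^3)}^4 \lesssim \sum_{\theta_1, \theta_2} \int_{\mathbb{R}^3} |f_{\theta_1}|^2 |f_{\theta_2}|^2 = \int_{\mathbb{R}^3} \Bigl( \sum_\theta |f_\theta|^2 \Bigr)^2 = \int_{\mathbb{R}^3} |g|^2,
\]
where $g = \sum_\theta |f_\theta|^2$.

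For the second step, I apply Lemma~\ref{incidenceintro} with $r = K^{1/2}$ (so that $r^2 = K$) to the function $g$. The hypothesis $\mathrm{supp}\,\hat f \subset N_{K^{-1}}(\Gamma_{\frac{1}{K}}) \subset N_{K^{-1}}(\Gamma)$ is automatic, and the lemma directly produces
\[
\int_{\mathbb{R}^3} |g|^2 \lesssim \sum_{K^{-1/2} \le s \le 1} \sum_{d(\tau) = s} \sum_{U \pp U_{\tau, K}} |U|^{-1} \|S_U f\|_{L^2}^4,
\]
which is exactly the right-hand side of~(\ref{F4}). Combining the two steps finishes the proof with an absolute constant, comfortably absorbing any residual $\log K$ factor into $C_\delta K^\delta$.

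\emph{The main obstacle} is justifying the $O(1)$ overlap bound in the first step. On the full cone $\Gamma$ the fiber parameter $h$ would range over an interval of length $\sim 1$ and produce $\sim K^{1/2}$ or more overlapping cap pairs per frequency, so a na\"ive C\'ordoba estimate would lose a polynomial factor in $K$. The restriction $\nu_3 \in [1 - K^{-1}, 1]$ defining $\Gamma_{\frac{1}{K}}$ is precisely the mechanism that trims each fiber to match the cap resolution and restores bounded overlap---the concrete manifestation of the heuristic (borrowed from Bourgain--Demeter) that $\Gamma_{\frac{1}{K}}$ behaves like a parabolic cylinder. Lemma~\ref{iterate} can be viewed as a bilinear-Kakeya reformulation of this same geometric input at the level of a single pair of caps, providing an alternative (more bookkeeping-intensive) route to the first step via a broad-narrow decomposition.
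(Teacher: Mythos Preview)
Your overall strategy is correct and genuinely different from the paper's proof. The paper argues by induction on $K$: it introduces an intermediate angular scale $K_0 \ll K^{\delta/10}$, runs a Bourgain--Guth broad/narrow split into $K_0^{-1}$-sectors, treats the narrow part by Lorentz rescaling plus the inductive hypothesis at scale $K/K_0^2$, and controls the broad bilinear part using Fefferman--C\'ordoba biorthogonality together with Lemma~\ref{iterate} (which feeds only into the $s=1$ term on the right). You bypass the induction and the broad/narrow machinery entirely by combining the global C\'ordoba $L^4$ square-function identity for the parabola---which the paper does use, but only later in the proof of Proposition~\ref{thm: parabola}---directly with the Kakeya bound Lemma~\ref{incidenceintro} at $R=K$. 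This is shorter and in fact gives the lemma with an absolute constant rather than $C_\delta K^\delta$.

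There is, however, a gap in your justification of Step~1. The overlap multiplicity of the \emph{difference} sets $\theta_1 - \theta_2$ is not $O(1)$ uniformly: at frequencies $\omega$ near the origin all diagonal pairs $(\theta,\theta)$ land there, so the multiplicity is $\sim K^{1/2}$. In your fiber language, the fiber over $\omega=0$ is two-dimensional, not one-dimensional---your ``generically'' hides exactly the region that causes trouble. The standard fix is to expand $f^2=\sum_{\theta_1,\theta_2} f_{\theta_1}f_{\theta_2}$ instead of $|f|^2$: the Minkowski \emph{sums} $\theta_1+\theta_2$ over unordered pairs have $O(1)$ overlap by the usual argument that $\eta_1+\eta_2$ and $\eta_1^2+\eta_2^2$ determine $\{\eta_1,\eta_2\}$ (this is precisely property~$(*)$ in the paper's proof of Proposition~\ref{thm: parabola}). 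Alternatively, split off the diagonal $g=\sum_\theta |f_\theta|^2$ explicitly and note that the \emph{off-diagonal} differences do have bounded overlap. Either way the conclusion $\|f\|_4^4 \lesssim \int g^2$ survives. Also, your $(\eta,h)$ parametrization is more elaborate than needed: each $K^{-1/2}$-cap already spans the full $\nu_3$-interval $[1-K^{-1},1]$, so the overlap question reduces cleanly to the two-dimensional parabola in the $(\nu_1,\nu_2)$-plane.
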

\begin{proof}
	
	{\color{black}We induct on $K$. The base case $K \lesssim_{\delta} 1$ is easy by H\"{o}lder's inequality.}
	
	Let $1\ll K_0 \ll K^{\delta/{\color{black}10}}$. We tile $N_{\frac{1}{K}}(\Gamma_{\frac{1}{K}})$ with sectors $\tau$ of aperture $\frac{1}{K_0}$ and width $\frac{1}{K}$ and decompose $f=\sum_{d(\tau)=\frac{1}{K_0}} f_{\tau}$.
	
	Now $N_{\frac{1}{K}}(\Gamma_{\frac{1}{K}})$ is the $\frac{1}{K}$-neighborhood of an arc of a parabola of length 1, and each $\tau$ is the $\frac{1}{K}$-neighborhood of an arc of the parabola of length $\frac{1}{K_0}$.

The Bourgain--Guth argument \cite{BG} says the following.
At each point $f(x)=\sum_{\tau} f_{\tau}(x)$. Let $\tau^*$ {\color{black}satisfy} $\max_{\tau} |f_{\tau}|(x) = |f_{\tau^*}|(x)$. If $|f_{\tau^*}|(x)\geq 1/10 |f|(x)$, then $|f|^4(x)\lesssim \sum_{\tau} |f_{\tau}|^4(x)$. Otherwise, there exists {\color{black}a} $\tau^{**}$ such that $\text{dist} (\tau^{**},\tau^*)\geq 1/K_0$ and $|f_{\tau^*}|(x)\geq |f_{\tau^{**}}|(x)  \geq \frac{1}{2K_0} |f|(x)$.
Hence,
\begin{align*}
|f|^4 \lesssim \sum_{d(\tau)=1/K_0} |f_{\tau}|^4+ K_0^4 \sum_{\text{dist}(\tau_1, \tau_2)\geq 1/K_0} |f_{\tau_1}f_{\tau_2}|^2.
\end{align*}

For the {\color{black}integral of the} first term, we rescale $\tau$ to be the $K^{-1}K_0^2$-neighborhood of $\Gamma_{1/K}$ (the rescaling argument here is similar to the one in the proof of Lemma~\ref{general} in Section~\ref{secusingrescaling}, which we {\color{black}will} do with full details), then we apply {\color{black}the} induction {\color{black}hypothesis} on the scale $K/K_0^2<K$.

For the {\color{black}integral of the} second term, we decompose $f_{\tau_j}=\sum_{\theta_j\subset \tau_j, d(\theta_j)=K^{-1/2}} f_{\theta_j}$, $j=1,2$. The functions $f_{\theta_1}f_{\theta_2}$ are essentially  orthogonal because they have almost disjoint Fourier  support, as in the Fefferman--C{\'o}rdoba proof of restriction for the parabola {\color{black}\cite{F}\cite{C0}}.

Since $\text{dist}(\tau_1, \tau_2) \geq \frac{1}{K_0}$, the Minkowski sum  $(\theta_1+\theta_2) \cap (\theta_1' +\theta_2') = \emptyset$ for $\theta_j, \theta_j' \subset \tau_j$, $j=1,2$,  unless $\theta_1'\subset K_0 \theta_1$ and $\theta_2'\subset K_0 \theta_2$. Hence
\begin{align*}
\sum_{B_{K^{1/2}}\subset \mathbb{R}^3}\int_{B_{K^{1/2}}} |f_{\tau_1}f_{\tau_2}|^2 &\leq K_0^2 \sum_{B_{K^{1/2}}\subset \mathbb{R}^3} \sum_{\text{dist}(\theta_1, \theta_2)\geq 1/K_0} \int_{B_{K^{1/2}}} |f_{\theta_1}f_{\theta_2}|^2\\
	(\text{Lemma~\ref{iterate}}) &\lesssim K_0^3\sum_{B_K \subset \mathbb{R}^3} |B_K|^{-1}  \sum_{\text{dist}(\theta_1, \theta_2)\geq 1/K_0}\int |f_{\theta_1}|^2 w_{B_K} \int |f_{\theta_2}|^2 w_{B_K}\\
	&\lesssim K_0^3 \sum_{B_K\subset \mathbb{R}^3} |B_K|^{-1} \|S_{B_K}f\|_{L^2}^4. \qedhere
\end{align*}

The right-hand side of the final line corresponds to the $s=1$ term of the right-hand side of \eqref{F4}.
\end{proof}

We recall the statement of Lemma~\ref{small ball}.  Unwinding the definition of $S_K(r,K)$ it says the following:
\begin{prop}\label{thm: parabola}
	Let $f$ be a function whose Fourier transform is supported on the $\frac{1}{K}$-neighborhood of $\Gamma_{\frac{1}{K}}$. For any $\delta > 0$ and any $r\leq K$,
	\begin{equation}\label{parabola}
	\sum_{B_r\subset {\color{black}\mathbb{R}^3}} |B_r|^{-1} \|{\color{black}S_{B_r} f}\|_{L^2(B_r)}^4 \leq C_{\delta} K^{\delta} \sum_{K^{-1/2}\leq s \leq 1}\sum_{d(\tau)=s}  \underset{U\pp U_{\tau{\color{black}, K}}}{\sum} |U|^{-1} \|{\color{black}S_U f}\|_{L^2}^4.
	\end{equation}
\end{prop}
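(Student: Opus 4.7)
The plan is to deduce Proposition~\ref{thm: parabola} from Lemma~\ref{lem: parabola} via a ``reverse'' local $L^2$-orthogonality estimate combined with Cauchy--Schwarz, thereby controlling the left-hand side of \eqref{parabola} by $\|f\|_{L^4(\mathbb{R}^3)}^4$.

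The first step is to establish the following variant of Lemma~\ref{localorthogonality}: for any ball $B_r$ in a finitely overlapping cover of $\mathbb{R}^3$ and any $E > 0$,
\begin{equation}\label{revortho}
\|S_{B_r} f\|_{L^2(B_r)}^2 = \sum_{d(\tau)=r^{-1/2}} \|f_\tau\|_{L^2(B_r)}^2 \lesssim_E \|f\|_{L^2(w_{B_r, E})}^2.
\end{equation}
The sectors $\tau$ of aperture $r^{-1/2}$ have angular separation $r^{-1/2} \geq r^{-1}$, so the neighborhoods $\{N_{r^{-1}}(\tau)\}$ have bounded overlap. Multiplying $f$ by a Schwartz bump $\phi_{B_r}$ with $\phi_{B_r} \gtrsim 1$ on $B_r$, rapid decay outside, and $\hat\phi_{B_r}$ supported in $B(0, r^{-1})$, the Fourier transform $\widehat{f_\tau \phi_{B_r}} = \hat f_\tau * \hat\phi_{B_r}$ is supported in $N_{r^{-1}}(\tau)$. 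Choosing a real nonnegative smooth partition of unity $\{\psi_\tau\}$ on a neighborhood of $\mathrm{supp}\,\hat f$ that depends only on the angular variable, each $\psi_\tau$ varies slowly on the scale $r^{-1/2} \gg r^{-1}$; Taylor expansion inside the convolution integral then yields the pointwise approximation $\widehat{f_\tau \phi_{B_r}}(\xi) \approx \psi_\tau(\xi) \widehat{f \phi_{B_r}}(\xi)$, and Plancherel together with $\sum_\tau \psi_\tau^2 \leq \sum_\tau \psi_\tau = 1$ gives
$$\sum_\tau \|f_\tau \phi_{B_r}\|_{L^2}^2 \lesssim \|f \phi_{B_r}\|_{L^2}^2 \lesssim \|f\|_{L^2(w_{B_r, E})}^2.$$
Since $\phi_{B_r} \gtrsim 1$ on $B_r$, this implies \eqref{revortho}.

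The second step is to square \eqref{revortho} and apply Cauchy--Schwarz. Using $\left(\int |f|^2 w_{B_r, E}\right)^2 \leq \left(\int w_{B_r, E}\right) \left(\int |f|^4 w_{B_r, E}\right)$ together with $\int w_{B_r, E} \lesssim |B_r|$, I get
$$|B_r|^{-1} \|S_{B_r} f\|_{L^2(B_r)}^4 \lesssim \int |f|^4 w_{B_r, E}.$$
Summing over $B_r$ in the finitely overlapping cover and using $\sum_{B_r} w_{B_r, E} \lesssim 1$ pointwise for $E$ large enough,
$$\sum_{B_r \subset \mathbb{R}^3} |B_r|^{-1} \|S_{B_r} f\|_{L^2(B_r)}^4 \lesssim \|f\|_{L^4(\mathbb{R}^3)}^4.$$
Applying Lemma~\ref{lem: parabola} to the right-hand side then yields $\|f\|_{L^4(\mathbb{R}^3)}^4 \leq C_\delta K^\delta$ times the right-hand side of \eqref{parabola}, completing the proof.

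The main technical obstacle is establishing the reverse local orthogonality~\eqref{revortho}. Unlike Lemma~\ref{localorthogonality}, which bounds $\|f\|_{L^2(B_r)}^2$ from above by the $\tau$-wise $L^2$ sum, here I need the opposite inequality. The Plancherel argument goes through only because the Fourier partition $\psi_\tau$ can be chosen to be slowly varying at scale $r^{-1}$; this is possible precisely because $\hat f$ is supported in the thin slab $N_{K^{-1}}(\Gamma_{\frac{1}{K}})$, so the $\psi_\tau$ may be taken to depend only on the angular coordinate and need not vary sharply in the normal direction.
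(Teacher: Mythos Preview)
Your approach has a genuine gap at the reverse local orthogonality step \eqref{revortho}. In the paper's setup, $f_\tau = \sum_{\theta \subset \tau} f_\theta$ where the pieces $\{f_\theta\}$ are \emph{arbitrary} functions satisfying only (a) $\mathrm{supp}\,\hat f_\theta \subset \theta$ and (b) $f = \sum_\theta f_\theta$; they are not assumed to come from a smooth Fourier multiplier applied to $f$. Your Taylor-expansion argument tacitly identifies $\hat f_\tau$ with $\psi_\tau \hat f$, which is illegitimate here. Concretely: since the sectors in $\SSS_{K^{-1/2}}$ are finitely overlapping (not disjoint), pick adjacent $\theta_1,\theta_2$ and a bump $g$ with $\hat g$ supported in $\theta_1 \cap \theta_2 \cap N_{K^{-1}}(\Gamma_{1/K})$; set $f_{\theta_1} = g$, $f_{\theta_2} = -g$, and all other $f_\theta = 0$. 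Then $f \equiv 0$ while $S_{B_r} f \not\equiv 0$ (choosing $\theta_1 \in A(\tau_1)$, $\theta_2 \in A(\tau_2)$ for distinct $\tau_1,\tau_2$ of aperture $r^{-1/2}$). So both \eqref{revortho} and your key intermediate inequality
\[
\sum_{B_r} |B_r|^{-1}\|S_{B_r} f\|_{L^2}^4 \lesssim \|f\|_{L^4(\RR^3)}^4
\]
fail outright. The proposition itself survives this example because its right-hand side also involves the $|f_\theta|$ and does not vanish; it is only your route through $\|f\|_{L^4}$ that breaks.

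The paper avoids ever comparing the square function to $|f|$. Instead it splits $\{\theta\}$ into $O(1)$ sparse subfamilies $A_j$ enjoying the Fefferman--C\'ordoba biorthogonality property (distinct pairs $\{\theta_1,\theta_2\}$ within $A_j$ have disjoint Minkowski sums $\theta_1+\theta_2$). For $f_j := \sum_{\theta \in A_j} f_\theta$ this yields, by Plancherel, the \emph{identity} $\int |f_j|^4 \sim \int (\sum_\tau |f_{j,\tau}|^2)^2$. One then bounds the left side of \eqref{parabola} by $\sum_j \sum_{B_r} \|S_{B_r} f_j\|_{L^4(B_r)}^4 = \sum_j \int (\sum_\tau |f_{j,\tau}|^2)^2 \sim \sum_j \|f_j\|_{L^4}^4$ using only H\"older on each $B_r$, and applies Lemma~\ref{lem: parabola} to each $f_j$. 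Because the $\theta \in A_j$ are genuinely separated, no cancellation between pieces can occur, so this works for arbitrary decompositions.
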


{\color{black}\begin{proof}
We take advantage that $\Gamma_{\frac{1}{K}}$ is well-approximated by a parabola at the scale $1/K$ and use an approach similar to Fefferman--C{\'o}rdoba's to bound the left-hand side of (\ref{parabola}) by (essentially) the left-hand side of (\ref{F4})\footnote{\color{black}Alternatively, one can blackbox the $L^4$ angular square function estimate by C{\'o}rdoba \cite{C} and have a slightly shorter proof. We present a self-contained proof here.}.

Since the smallest aperture in this proposition is $K^{-1/2}$, we use $\theta$ to denote a sector on $\Gamma_{\frac{1}{K}}$ of aperture $K^{-1/2}$ in the current proof.

Let $A_1, \ldots, A_{1000}$ be disjoint sets of $\theta$ such that each $\theta$ is in one of them and:


Within each $A_j$, if the Minkowski sum $(\theta_1 + \theta_2) \cap (\theta_1 '+ \theta_2 ') \neq \emptyset$, then $(\theta_1, \theta_2) = (\theta_1 ', \theta_2 ')$ or $(\theta_2 ', \theta_1 ').$ (*)

Similar to Fefferman--C{\'o}rdoba's proof, we show that if we take each $A_j$ to be a collection of sectors that are enough separated and on a short enough arc, then (*) holds. In fact, it suffices to justify (*) when the constraint $(\theta_1 + \theta_2) \cap (\theta_1 '+ \theta_2 ') \neq \emptyset$ is replaced by the weaker one below: $\pi_3((\theta_1 + \theta_2)) \cap \pi_3 ((\theta_1 '+ \theta_2 ')) \neq \emptyset$. Here $\pi_3$ is the standard projection to the first two coordinates in the $(\nu_1, \nu_2, \nu_3)$-coordinate system. 
But the projection of $\Gamma_{\frac{1}{K}}$ onto the first two coordinates is contained in the $\frac{2}{K}-$neighborhood of the parabola $\nu_2^2 = 2\nu_1$, and the projection of each $\theta$ is the corresponding cap inside that neighborhood. We use $Error$ to denote a number (the ``error term'') whose absolute value is $\leq 4K^{-1}$. If $x_1 + x_2 = a + Error$ and $x_1^2 + x_2^2 = b + Error$ with $a, b \leq 2$, then $(x_1 - x_2)^2 = 2b-a^2 + 7Error$. Hence $|x_1 - x_2| = \sqrt{|2b-a^2|} + 3\sqrt{Error}$. This would imply that the pair $(x_1, x_2)$ is determined by the pair $(a, b)$, up to a swap in order and up to changing within $100$ adjacent caps $\theta$.

We use $\tau$ to denote caps with aperture $r^{-1/2} \geq K^{-1/2}$ in the current proof. Consider the decomposition $f_j = \sum_{\theta \in A_j} f_{\theta}$ and let $f_{j, \tau} = \sum_{\theta \subset \tau, \theta \in A_j} f_{\theta}$.

By the property (*) and Plancherel, we have for a fixed $j$,
\begin{align}\label{tauFC}
\int_{\mathbb{R}^3} |f_j|^4 & = \int_{\mathbb{R}^3} |\sum_{\tau} f_{j, \tau}|^4 \nonumber\\
& = \int_{\mathbb{R}^3} \sum_{\tau_1, \tau_2, \tau_3, \tau_4: (\text{supp} f_{j, \tau_1} + \text{supp} f_{j, \tau_2}) \cap (\text{supp} f_{j, \tau_3} + \text{supp} f_{j, \tau_4}) \neq \emptyset} f_{j, \tau_1} f_{j, \tau_2} \bar{f}_{j, \tau_3} \bar{f}_{j, \tau_4} \nonumber\\
& = \int_{\mathbb{R}^3} \sum_{\tau_1, \tau_2} n_{\tau_1, \tau_2}|f_{j, \tau_1} f_{j, \tau_2}|^2 \nonumber\\
& \sim \int_{\mathbb{R}^3} (\sum_{\tau} |f_{j, \tau}|^2)^2
\end{align}
where $n_{\tau_1, \tau_2} = 1$ if $\tau_1 = \tau_2$ and $n_{\tau_1, \tau_2} = 4/2 = 2$ if $\tau_1 \neq \tau_2$.

By (\ref{tauFC}) we have
\begin{align}
\sum_{B_r\subset \mathbb{R}^3} |B_r|^{-1} \|S_{B_r} f\|_{L^2(B_r)}^4
& \lesssim \sum_{j=1}^{1000}\sum_{B_r\subset \mathbb{R}^3} |B_r|^{-1} \|S_{B_r} f_j\|_{L^2(B_r)}^4\nonumber\\
& \leq \sum_{j=1}^{1000} \sum_{B_r\subset \mathbb{R}^3} \|S_{B_r} f_j\|_{L^4(B_r)}^4\nonumber\\
& = \sum_{j=1}^{1000} \int_{\mathbb{R}^3} (\sum_{\tau} |f_{j,\tau}|^2)^2\nonumber\\
& \sim \sum_{j=1}^{1000} \int_{\mathbb{R}^3} |f_j|^4\nonumber\\
(\text{Lemma ~\ref{lem: parabola}}) & \leq C_{\delta}K^{\delta}  \sum_{j=1}^{1000} \sum_{K^{-1/2}\leq s \leq 1}\sum_{d(\tau)=s}  \underset{U\pp U_{\tau{\color{black}, K}}}{\sum} |U|^{-1} \|{\color{black}S_U f_j}\|_{L^2}^4\nonumber\\
& \lesssim C_{\delta}K^{\delta} \sum_{K^{-1/2}\leq s \leq 1}\sum_{d(\tau)=s}  \underset{U\pp U_{\tau{\color{black}, K}}}{\sum} |U|^{-1} \|{\color{black}S_U f}\|_{L^2}^4. \qedhere
\end{align}
\end{proof}}

\section{The proof of Lemma~\ref{general}} \label{secusingrescaling}
Now we prove Lemma~\ref{general} using the Lorentz rescaling.  First we recall the statement.

\begin{lemma*}
	For any $r_1< r_2 \leq r_3$,
	$$S_K(r_1, r_3)\leq \log r_2 \cdot S_K(r_1, r_2)\max_{r_2^{-1/2}\leq s \leq 1}S_K(s^2r_2,  s^2 r_3).$$
\end{lemma*}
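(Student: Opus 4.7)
The plan is to chain together two applications of $S_K$ at different scales, using the Lorentz rescaling of Section~\ref{lorentz} to pass between them. Take any $f$ with $\mathrm{supp}\,\hat f \subset N_{r_3^{-1}}(\Gamma_{\frac{1}{K}})$. Since $r_3 \geq r_2$, this support also lies in $N_{r_2^{-1}}(\Gamma_{\frac{1}{K}})$, so $S_K(r_1, r_2)$ applies directly and yields
$$\sum_{B_{r_1}}|B_{r_1}|^{-1}\|S_{B_{r_1}}f\|_{L^2(B_{r_1})}^{4} \leq S_K(r_1, r_2) \sum_{r_2^{-1/2}\leq s\leq 1}\sum_{\tau\in\SSS_s}\sum_{U\pp U_{\tau, r_2}}|U|^{-1}\|S_Uf\|_{L^2}^{4},$$
where each $S_U f$ uses the decomposition into aperture $r_2^{-1/2}$ sectors $\theta\subset\tau$.

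Next, for each fixed $\tau$ with $d(\tau)=s$, I would apply the Lorentz transformation $\mathcal{L}$ carrying $\Lambda_\tau$ onto $\Gamma_{\frac{1}{K}}$, and define $g(x) := f_\tau(\mathcal{L}^* x)$. By the discussion in Section~\ref{lorentz}, $\hat g$ is supported in $\mathcal{L}(\tau\cap N_{r_3^{-1}}(\Gamma_{\frac{1}{K}})) \subset N_{(s^2 r_3)^{-1}}(\Gamma_{\frac{1}{K}})$, so $S_K(s^2 r_2, s^2 r_3)$ applies to $g$. The relevant geometric identifications, using $|\det \mathcal{L}^*|=s^{-3}$ and Lemma~\ref{LUtau}, are that $\mathcal{L}^*(B_{s^2 r_2}) \approx U_{\tau, r_2}$ (so translates of $B_{s^2 r_2}$ tiling $\RR^3$ pull back to translates of $U_{\tau, r_2}$), and $\mathcal{L}^*(U_{\tau', s^2 r_3}) \approx U_{\mathcal{L}^{-1}(\tau'), r_3}$ (so aperture-$s'$ sectors in $g$-space pull back to aperture-$ss'$ sectors $\tau''\subset\tau$). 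Furthermore, sectors of aperture $(s^2 r_2)^{-1/2}$ and $(s^2 r_3)^{-1/2}$ in $g$-space correspond to sectors of aperture $r_2^{-1/2}$ and $r_3^{-1/2}$ in $f$-space, matching the decompositions used in $S_U f$ and $S_{\tilde U} f$. A change-of-variables calculation shows that the Jacobian factor $s^3$ from $|\det \mathcal{L}^*|^{-1}$ appears equally on the left and right of $S_K(s^2 r_2, s^2 r_3)$ applied to $g$ and cancels, giving
$$\sum_{U\pp U_{\tau, r_2}}|U|^{-1}\|S_Uf\|_{L^2}^{4}\leq S_K(s^2 r_2, s^2 r_3)\sum_{r_3^{-1/2}\leq s''\leq s}\sum_{\substack{\tau''\subset \tau\\ d(\tau'')=s''}}\sum_{\tilde U\pp U_{\tau'', r_3}}|\tilde U|^{-1}\|S_{\tilde U}f\|_{L^2}^{4}.$$

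Finally, I would substitute this bound back into the first display, replace $S_K(s^2 r_2, s^2 r_3)$ by its maximum over $s\in[r_2^{-1/2},1]$, and swap the order of summation over $(s,\tau)$ and $(s'',\tau'')$. For each fixed $(s'',\tau'')$ the outer variable $s$ ranges over dyadic values in $[\max(r_2^{-1/2},s''), 1]$, and for each such $s$ there is a unique $\tau\in\SSS_s$ containing $\tau''$. This produces at most $\log r_2$ terms, yielding the claimed bound $S_K(r_1, r_3) \leq \log r_2 \cdot S_K(r_1, r_2) \cdot \max_{s} S_K(s^2 r_2, s^2 r_3)$. The main technical obstacle I anticipate is the careful bookkeeping of the Jacobian $|\det \mathcal{L}^*|=s^{-3}$: it appears on both sides of the inner $S_K$ inequality and must cancel cleanly so that no extraneous power of $s$ remains, which requires verifying that the transported tilings by $U_{\tau, r_2}$ and $U_{\tau'', r_3}$ are genuinely (not just approximately) compatible with the tilings by $B_{s^2 r_2}$ and $U_{\tau', s^2 r_3}$ in $g$-space.
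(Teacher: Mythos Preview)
Your proposal is correct and follows essentially the same route as the paper: apply $S_K(r_1,r_2)$ first, then for each $\tau$ with $d(\tau)=s$ use the Lorentz rescaling of Section~\ref{lorentz} to transport the defining inequality for $S_K(s^2 r_2, s^2 r_3)$ back to $f_\tau$ (this is exactly the paper's Lemma~\ref{rescaling}), and finally count that each $\tau''$ sits inside $O(\log r_2)$ sectors $\tau$. The only discrepancy is a harmless direction convention for $\mathcal{L}^*$ versus $(\mathcal{L}^*)^{-1}$ when invoking Lemma~\ref{LUtau}; the paper's item~(4) confirms your Jacobian-cancellation point, so the anticipated obstacle is not a real one.
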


\begin{proof} Suppose that $\hat f$ is supported on $N_{r_3^{-1}}(\Gamma_{\frac{1}{K}})$.  To bound $S_K(r_1, r_3)$, we need to bound
	
$$ \sum_{B_{r_1}\subset {\color{black} \mathbb{R}^3}} |B_{r_1}|^{-1} \|{\color{black} S_{B_{r_1}} f}\|_{L^2(B_{r_1})}^4 . 
$$

We can apply the definition of $S_K(r_1, r_2)$ {\color{black} and get} 

$$ \sum_{B_{r_1}\subset {\color{black} \mathbb{R}^3}} |B_{r_1}|^{-1} \|{\color{black} S_{B_{r_1}} f}\|_{L^2(B_{r_1})}^4 \le S_K(r_1, r_2) \sum_{r_2^{-1/2} \le s \le 1} \sum_{d(\tau) = s} \sum_{{\color{black}U_1 \pp U_{\tau, r_2}}} |U_1|^{-1} \|{\color{black} S_{U_1} f} \|_{L^2{\color{black}(U_1)}}^4  .$$

Recall that if $U\pp U_{ \tau,r}$, then $S_U f=( \underset{d(\theta')=r^{-1/2}, \theta'\subset \tau}{\sum} |f_{\theta'}|^2)^{\frac{1}{2}}|_{U}$. In particular, $S_{B_r} f= ( \underset{d(\theta')=r^{-1/2} }{\sum} |f_{\theta'}|^2)^{\frac{1}{2}}|_{B_r}$.



Using Lorentz rescaling, we will prove the following lemma:

\begin{lemma}\label{rescaling} For any sector $\tau$ with $d(\tau) = s$, 
	\begin{equation}\label{before}
	\sum_{{\color{black}U_1 \pp U_{\tau, r_2}}
	} |U_1|^{-1} \|{\color{black} S_{U_1}} f\|_{L^2(U_1)}^4 \leq S_K(s^2r_2, s^2r_3) \sum_{r_3^{-1/2}\leq s'\leq s} ~~\sum_{d(\tau')=s', \tau' \subset \tau} ~~\sum_{{\color{black} U\pp U_{\tau', r_3}}
	}  |U|^{-1} \|{\color{black}S_U f}\|_{L^2{\color{black}(U)}}^4.
	\end{equation}
\end{lemma}


We defer the proof of Lemma \ref{rescaling} {\color{black}to the end of this section}.  If we plug in Lemma \ref{rescaling} and expand everything, then we get Lemma \ref{general}:

$$  \sum_{B_{r_1}\subset {\color{black} \mathbb{R}^3}} |B_{r_1}|^{-1} \|{\color{black} S_{B_{r_1}} f}\|_{L^2(B_{r_1})}^4  \leq \log r_2 S_K(r_1, r_2) {\color{black} \max_{r_2^{-1/2}\leq s \leq 1}} S_K(s^2 r_2, s^2r_3) \sum_{r_3^{-1/2}\leq s'\leq 1}\sum_{d(\tau')=s'} \sum_{U\pp U_{\tau', r_3}} |U|^{-1}\|{\color{black}S_U f}\|_{L^2{\color{black}(U)}}^4. $$

\noindent The factor $\log r_2$ appears here for the following reason: after we expand, each sector $\tau'$ will appear at most $\log r_2$ times, because $\tau'$ lies in $\tau$ for at most $\log r_2$ sectors $\tau$ with $r_2^{-1/2} \le d(\tau) \le 1$.
\end{proof}

\begin{proof} [Proof of Lemma \ref{rescaling}]  The definition of $S_K(s^2 r_2, s^2 r_3)$ says that if $\hat h$ is supported on $N_{s^{-2} r_3^{-1}}(\Gamma_{\frac{1}{K}})$, then
	
\begin{equation} \label{Sonh}
\sum_{{\color{black}B_{s^2 r_2}}} |B_{s^2 r_2}|^{-1} \| {\color{black}S_{B_{s^2 r_2}} h} \|_{L^2(B_{s^2 r_2})}^4 \le S_K(s^2 r_2, s^2 r_3) \sum_{s^{-1} r_2^{-1/2} \le d(\tau'') \le 1} \sum_{{\color{black}U'' \pp U_{\tau'', s^2 r_3}}} |U''|^{-1} \| {\color{black}S_{U''} h} \|_{L^2{\color{black}(U'')}}^4.
\end{equation}


On the other hand, Lemma \ref{rescaling} says that if $\tau$ is a sector of $\Gamma_{\frac{1}{K}}$ with $d(\tau) = s$, and $\hat f_\tau$ is supported on $N_{r_3^{-1}}(\Gamma_{\frac{1}{K}}) \cap \tau$, then 

	\begin{equation}\label{SonfU2}
\sum_{{\color{black}U_1 \pp U_{\tau, r_2}}} |U_1|^{-1} \|{\color{black}S_{U_1} f} \|_{L^2(U_1)}^4 \leq S_K(s^2r_2, s^2r_3) \sum_{r_3^{-1/2}\leq s'\leq s} ~~\sum_{d(\tau')=s', \tau' \subset \tau} ~~\sum_{{\color{black}U\pp U_{\tau', r_3}}}  |U|^{-1} \|{\color{black}S_U f}\|_{L^2}^4.
\end{equation}

To connect them, we begin with a Lorentz transformation $\mathcal{L}$ so that $\mathcal{L}: \tau \cap \Gamma_{\frac{1}{K}} \rightarrow \Gamma_{\frac{1}{K}}$ is a diffeomorphism.  This $\mathcal{L}$ is constructed in Section \ref{lorentz}, where it is shown that $\mathcal{L}$ takes $N_{r_3^{-1}}(\Gamma_{\frac{1}{K}}) \cap \tau$ to $N_{s^{-2} r_3^{-1}}(\Gamma_{\frac{1}{K}})$.  Now we define $h$ by $\hat{h} = \hat{f}_{{\color{black}\tau}}(\mathcal{L}^{-1}(\cdot))${\color{black}. Moreover let ${\hat h_{\tau''}} = {\hat f_{\tau '}} (\mathcal{L}^{-1} (\cdot))$ where $\mathcal{L} (\tau') = \tau''$, see the item (1) below. We} see that $\hat h$ is supported on $N_{s^{-2} r_3^{-1}}(\Gamma_{\frac{1}{K}})$ and so $h$ obeys (\ref{Sonh}).  When we unwind the Lorentz transformations, we claim that (\ref{Sonh}) becomes (\ref{SonfU2}), which proves the lemma.  To see that this unwinding works as desired, we check how each piece transforms.

\begin{enumerate}
	\item If $\tau' \subset \tau$ is a sector of $\Gamma_{\frac{1}{K}}$ with aperture $d(\tau')$, then $\mathcal{L}(\tau')$ is a sector $\tau''$ of $\Gamma_{\frac{1}{K}}$ with $d(\tau'') = s^{-1} d(\tau')$, as we showed in Section \ref{lorentz}. {\color{black}In particular, $\mathcal{L}$ transforms a $\theta' \subset \tau$ with aperture $d(\theta')=r_3^{-1/2}$ into a sector with aperture $s^{-1} r_3^{-1/2}$, which appears in the definition of $S_{U''}h$.
	}
	
	\item $\mathcal{L}^*(U_{\tau', r_3}) = U_{\tau'', s^2 r_3}$.  Since $\tau'' = \mathcal{L}(\tau')$, this follows from Lemma \ref{LUtau}.
	
	\item $\mathcal{L}^* (U_{\tau, r_2}) = B_{s^2 r_2}$.  Note that $\mathcal{L}(\tau)$ is the sector corresponding to all of $\Gamma_{\frac{1}{K}}$, which is essentially the unit ball.  We will denote this sector just by $B_1$.  By Lemma \ref{LUtau}, $\mathcal{L}^* (U_{\tau, r_2}) = U_{B_1, s^2 r_2}$.  By definition, the right-hand side is the convex hull of the union of $\theta^*$ over all sectors $\theta$ of aperture $\sim s^{-1} r_2^{-\frac{1}{2}}$, and this is approximately the ball of radius $s^2 r_2$.
	
	
	\item The Jacobian factors from the change of variables work out the same on the left-hand side and the right-hand side.  Since both sides involve a volume to the power $-1$ times an $L^2$ norm to the power 4, the Jacobian factors are the same on both sides of the inequality. \qedhere
\end{enumerate}
\end{proof}

\vskip.25in

\end{document}